\theoremstyle{plain}
\newtheorem{thm}{Theorem}[section]
\newtheorem{lem}[thm]{Lemma}
\newtheorem{cor}[thm]{Corollary}
\newtheorem{conj}[thm]{Conjecture}
\noindent \emph{Proof.} {}{#1}{}}{\hfill
\theoremstyle{plain} 
\newcommand{\thistheoremname}{}
\newtheorem{genericthm}[section]{\thistheoremname}
\theoremstyle{definition}
\newtheorem{definition}[thm]{Definition}
\newtheorem{example}[thm]{Example}
\newtheorem{cla}[thm]{Claim}
\title{Odd clique minors and chromatic bounds of \{3$K_1$, paraglider\}-free graphs}
\author{Yuqing Ji$^a$, Yue Wang$^{a}$, Yujun Yang$^b$, Xia Zhang$^{a,}$\thanks{Corresponding author.}\\
 \footnotesize{ $^a$School of Mathematics and Statistics, Shandong Normal University, Jinan, 250358, China\\
 $^b$ School of Mathematics and Information Science, Yantai University, Yantai, Shandong, 264005, China
\bigskip

  Emails:  yqji98@aliyun.com (Y. Ji), wangyue$\_$math@163.com (Y. Wang), yangyj@ytu.edu.cn (Y. Yang), xiazhang@sdnu.edu.cn (X. Zhang)}}
\date{ }
\begin{document}
\maketitle

\begin{abstract}
A paraglider, house, 4-wheel, is the graph that consists of a cycle $C_4$ plus an additional vertex adjacent to three vertices, two adjacent vertices, all the vertices of the $C_4$, respectively. For a graph $G$, let $\chi(G)$, $\omega(G)$ denote the chromatic number, the clique number of $G$, respectively. Gerards and Seymour from 1995 conjectured that every graph $G$ has an odd $K_{\chi(G)}$ minor. In this paper, based on the description of graph structure, it is shown that every graph $G$ with independence number two satisfies the conjecture if one of the following is true: $\chi(G) \leq 2\omega(G)$ when $n $ is even, $\chi(G) \leq 9\omega(G)/5$ when $n$ is odd, $G$ is a quasi-line graph, $G$ is $H$-free for some induced subgraph $H$ of paraglider, house or $W_4$. Moreover, we derive an optimal linear $\chi$-binding function for \{3$K_1$, paraglider\}-free graph $G$ that $\chi(G)\leq   \max\{\omega(G)+3, 2\omega(G)-2\}$, which improves the previous result, $\chi(G)\leq 2\omega(G)$, due to Choudum, Karthick and Shalu in 2008.

\noindent {\it AMS classification:} 05C15, 05C83\\
\noindent {\it Keywords}: Hadwiger's conjecture, odd clique minor, $\chi$-boundedness, independence number, paraglider-free.
\end{abstract}

\section{Introduction}

All graphs considered in this paper are simple and finite. Let $G$ be a graph with vertex set $V (G)= \{v_{1}, v_{2}, \cdots, v_{n}\}$ and edge set $E(G)$. Denote the order of $G$ by $|V(G)| = |G|$ and the $complement$ of $G$ by $\overline{G}$. For $S \subseteq V(G)$, let $G[S]$ be the subgraph of $G$ induced by $S$. For convenience, we write $H \sqsubseteq G$ if $H$ is an induced subgraph of $G$. We use $\chi(G)$, $\alpha(G)$, $\omega(G)$ to denote the chromatic number, the independence number, the clique number of $G$, respectively. We say $G$ is $t$-chromatic if $\chi(G) = t $. A graph $G$ is called $perfect$ if $ \chi(H) = \omega(H)$ for every induced subgraph $H$ of $G$, otherwise it is called imperfect. Gy\'arf\'as \cite{gya} extended the concept of perfection
and called a hereditary class $\mathcal{G}$ of graphs $\chi$-$bound$ if there is a function $f$ such that $\chi(G) \leq f(\omega(G))$ for every $G \in \mathcal{G}$. The function $f$ is called a $\chi$-$binding$ $function$ of $\mathcal{G}$ and is called $optimal$ if  no smaller function is a $\chi$-binding function for $\mathcal{G}$. In particular, for every $G \in \mathcal{G}$, we say that $G$ is $linearly$ $\chi$-$bound$ if $\chi(G) \leq c\omega(G)$ for some constant $c$. If $\mathcal{G}$ is the class of $H$-free graphs for some graph $H$, then $f$ is denoted by $f_H$.

For $v \in V(G)$, let $N_{G}(v)$ denote the set of all neighbors of $v$ in $G$. For a vertex set $S \subseteq V(G)$, $N_{G}(S) = \cup_{v \in S}  N_{G}(v)\setminus S$. As usual, we denote the cycle, the path and the complete graph on $n $ vertices by $C_{n}, P_{n}$ and $K_{n}$, respectively. For $k \geq 4$, a $k$-$wheel$ $W_k$ is the graph consisting of a cycle $C_k$ plus an additional vertex adjacent to all the vertices of the $C_k$. A $house$ is the graph that consists of a cycle $C_4$ plus an additional vertex adjacent to two adjacent vertices of the $C_4$. If vertices $x$ and $y$ are connected in $G$, the  $distance$ between $x$ and $y$ in $G$, denoted by $d_{G}(x, y)$, is the length of a shortest $(x, y)$-path in $G$. For any positive integer $t$, we write $[t]$ for the set $\{1, \cdots, t\}$.

For two vertex-disjoint graphs $G$ and $H$, graph $G \cup H $ is the $union$ of $G$ and $H$ with vertex set $V(G)\cup V(H)$ and edge set $E(G)\cup E(H)$; $G + H $ is the $join$ of $G$ and $H$ with vertex set $V(G)\cup V(H)$ and edge set $\{xy| x \in V(G), y\in V(H) \} \cup E(G) \cup E(H)$. Given two disjoint non-empty subsets $S, T \subseteq V(G)$, we denote the edge set $\{uv \in E(G): u \in S$ and $v \in T \}$ by $[S, T]$. And we say that $S$ is {\it complete} to $T$, denoted by $S \thicksim T$, if every vertex in $S$ is adjacent to all vertices in $T$; $S$ is {\it anti-complete} to $T$, denoted by $S \nsim T$, if $[S, T] = \emptyset$. When graph $G_1$ is isomorphic to $G_2$, we write $G_1 \cong G_2$. For any integer $k$, use $kG$ to denote the union of $k$ copies of $G$. A graph $G$ is $H$-free if $G$ has no induced subgraph isomorphic to $H$. For a family $\mathfrak{F}$ of graphs, $G$ is $\mathfrak{F}$-free if $G$ is $F$-free for every $F \in \mathfrak{F}$. We say that graph $G$ contains graph $H$ as a {\it minor} if $H$ can be obtained from a subgraph of $G$ by contracting edges.

In 1943, Hadwiger posed the following conjecture.
\begin{conj}[Hadwiger \cite{had}]\label{conj-hadwiger}
For all $t \geq 2$, every graph with no $K_{t}$ minor is $(t-1)$-colorable.
\end{conj}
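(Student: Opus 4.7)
The plan is strong induction on $t$. Base cases $t \in \{2,3,4\}$ are elementary: $t=2$ is immediate, $t=3$ holds because a graph with no $K_{3}$ minor is a forest, and $t=4$ is due to Hadwiger and Dirac via analysis of $3$-connected components. The cases $t=5$ and $t=6$ reduce, through Wagner's decomposition theorem for $K_{5}$-minor-free graphs and the more intricate Robertson-Seymour-Thomas argument for $K_{6}$-minor-free graphs, to the Four Colour Theorem. The real task is therefore the inductive step for $t \geq 7$.

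For that step I would take $G$ to be a vertex-minimal counterexample with $\chi(G) \geq t$ and no $K_{t}$ minor, and first establish the standard structural properties of such a minimum counterexample: $G$ is connected with no clique cutset of size $< t-1$ (else color the parts separately and identify along the cutset); every vertex has degree at least $t-1$ (else delete a low-degree vertex and extend the coloring); and consequently $G$ is $(t-1)$-connected. The core of the proof would then be to exhibit a \emph{reducing} configuration: a connected vertex set $S \subseteq V(G)$ such that the contraction $G/S$ still has no $K_{t-1}$ minor and yet any $(t-2)$-coloring of $G/S$ (which exists by the inductive hypothesis) can be extended to a $(t-1)$-coloring of $G$ by coloring the vertices of $S$. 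Promising candidates for $S$ include the closed neighborhood of a vertex of degree exactly $t-1$, an induced path between two nonadjacent vertices chosen so that contracting it decreases connectivity rather than minor-richness, or a fragment extracted from a tangle produced by the Robertson-Seymour structure theorem applied to a large $K_{t}$-minor-free subgraph.

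The main obstacle --- and the reason Conjecture \ref{conj-hadwiger} has remained open for over eighty years --- is precisely the existence of such a reduction once $t \geq 7$. For $t \leq 6$ the low connectivity of $K_{t}$-minor-free graphs forces a manageable Wagner-style decomposition, but for $t \geq 7$ the Robertson-Seymour structure theorem only guarantees decomposition into pieces that are ``almost embeddable'' on surfaces of bounded genus with apex and vortex corrections, and no existing technique controls how these corrections interact with the chromatic number. As a realistic interim target I would concentrate on the first open case $t=7$: show that every $K_{7}$-minor-free graph $G$ is $6$-colorable by taking a vertex $v$ of degree $6$, applying the $K_{6}$-minor-free result to $G-v$ to obtain a $5$-coloring, and attempting to reassign colors on $N_{G}(v)$ using Kempe-chain arguments to free a color for $v$. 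Even this sub-case is open, so any complete proof would require a genuinely new reduction technique beyond those that suffice for $t \leq 6$; the proposal above fixes a target but not a guaranteed route.
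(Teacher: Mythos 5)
This statement is Hadwiger's conjecture itself; the paper does not prove it and, as the paper notes, it remains open for all $t \geq 7$. Your proposal is therefore not a proof, and you acknowledge as much in your final paragraph. The genuine gap is the entire inductive step for $t \geq 7$: you correctly list the standard properties of a minimum counterexample (high minimum degree, no small clique cutset, $(t-1)$-connectivity, all going back to Dirac-style arguments), but the existence of a ``reducing configuration'' whose contraction preserves $K_{t-1}$-minor-freeness while allowing the coloring to be extended is precisely what nobody knows how to establish. Your candidate reductions do not survive scrutiny even in the first open case $t=7$: a vertex of degree exactly $t-1$ need not exist in a minimum counterexample (minimum degree at least $t-1$ is a lower bound, not an equality), and the Kempe-chain reassignment on a $6$-vertex neighborhood after $5$-coloring $G-v$ fails for the same reason Kempe's original argument for the Four Colour Theorem fails --- chains through distinct pairs of colors can cross and interfere. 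The Robertson--Seymour structure theorem, which you invoke as a fallback, gives almost-embeddable pieces with apices and vortices whose interaction with chromatic number is uncontrolled; this is exactly the obstruction you name, so the proposal correctly diagnoses the difficulty but supplies no route around it.

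For what it is worth, your summary of the known cases matches the paper's introduction: $t \leq 3$ is elementary, $t=4$ is due to Hadwiger and Dirac, $t=5$ reduces to the Four Colour Theorem by Wagner's decomposition, and $t=6$ by Robertson, Seymour and Thomas. The paper itself only addresses the \emph{odd-minor} strengthening (Conjecture \ref{conj-odd-Hdwiger}) restricted to graphs of independence number two under various additional hypotheses, which is a far weaker target than the full conjecture. No revision of your proposal can close this gap short of resolving a problem open since 1943.
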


It is easy to check that Hadwiger's conjecture trivially holds for $ t \leq 3$. For $t = 4$, as shown by Hadwiger himself \cite{had} and Dirac \cite{dir}, respectively, it is also true. Wagner \cite{wag} proved that the case $t = 5$ of Hadwiger's conjecture is, in fact, equivalent to the Four Color Theorem, and the same was shown for $t = 6$ by Robertson, Seymour and Thomas \cite{rst} in 1993. It remains open for $t \geq 7$. For more background on Hadwiger's conjecture, we refer to Seymour's survey \cite{sem} in 2016 and a very recent survey by Norin \cite{n}.

One strengthening of Hadwiger's conjecture is to consider the odd-minor variant. We call that $G$ has an {\it odd clique minor} of size at least $t$ if there are $t$ vertex disjoint trees in $G$ such that every two of them are joined by an edge, in addition, all the vertices of trees can be two-colored in such a way that the edges within the trees are bi-chromatic, but the edges between trees are monochromatic (and hence the vertices of all one-vertex trees must receive the same color). Obviously, any graph that has an odd $K_{t}$ minor certainly contains $K_{t}$ as a minor.

In 1995, Gerards and Seymour (see Section 6.5 in \cite{jto}) posed the following conjecture.

\begin{conj} [Gerards and Seymour \cite{jto}]\label{conj-odd-Hdwiger}
For all $t \geq 2$, every graph with no odd $K_{t}$ minor is $(t-1)$-colorable.
\end{conj}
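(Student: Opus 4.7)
The plan is to attack Conjecture~\ref{conj-odd-Hdwiger} by induction on $t$, analyzing a vertex-minimum counterexample. The base cases for small $t$ are known: $t \le 3$ is elementary, since a graph with no odd $K_3$ minor contains no odd cycle and is therefore bipartite, hence $(t-1)$-colorable; the cases $t = 4$ and $t = 5$ have been settled by earlier structural arguments in the literature. I would therefore fix $t \ge 6$ and take $G$ to be a $t$-chromatic counterexample with no odd $K_t$ minor, minimizing $|V(G)|$, and aim for a contradiction.

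First I would carry out the standard critical-graph reductions. By minimality, $G$ is vertex-critical and in particular $\delta(G) \ge t-1$. Moreover, $G$ cannot admit a small separation: any separation of order less than $t-1$ could be split, its sides inductively $(t-1)$-colored, and the colorings reglued across the separator; the same line of reasoning rules out clique cutsets. The crucial subtlety here, absent in the classical Hadwiger reduction, is that the gluing must also respect the bichromatic/monochromatic parity certificate built into the definition of an odd clique minor, which forces one to keep track of a signed-graph structure on the adhesion sets when transporting a putative odd $K_t$ minor back from a side of the separation to $G$.

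The structural heart of the argument would rest on the analogue, for odd minors, of the Robertson--Seymour excluded-minor structure theorem: a graph with no odd $K_t$ minor admits a tree decomposition whose torsos are ``almost embeddable'' in surfaces of bounded genus after removing a bounded apex set, decorated with a bipartite correction encoding the odd obstruction. From such a decomposition I would aim to either (i)~splice inductive $(t-1)$-colorings of the torsos along the adhesion sets, using the bipartite correction to realign palettes, or (ii)~produce an odd $K_t$ minor by a rerouting argument inside a sufficiently large torso, thereby contradicting the choice of $G$.

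The main obstacle, and the reason this conjecture remains open for $t \ge 6$, is precisely this splicing/rerouting step: converting the structural decomposition into either a coloring or an odd minor. Even for ordinary Hadwiger this linking step is the central difficulty, and for the odd variant the bookkeeping is more delicate because every step must carry along a signed-graph labeling (or equivalently a bipartite double cover) and finally certify that the $K_t$ minor produced in the signed graph really is odd in $G$. I would expect that a full proof along these lines requires, as a new ingredient, a packing-versus-coloring theorem for odd subdivisions extending Guenin's work to arbitrary $t$, which I suspect is the genuine research problem hidden inside this conjecture.
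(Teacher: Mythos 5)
The statement you were asked to prove is not a theorem of the paper at all: it is the Gerards--Seymour ``Odd Hadwiger'' conjecture, which the paper records as an open problem (settled only for $t \le 4$ by Catlin and, reportedly, $t=5$ by Guenin, and open for all $t \ge 6$). The paper offers no proof of it, and neither do you. Your submission is a research programme, not a proof: the base cases you cite are fine, and the critical-graph reductions ($\delta(G) \ge t-1$, no clique cutset) are standard, but everything after that is conditional on tools that do not exist in the form you need. There is a structure theorem for graphs excluding an odd $K_t$ minor (Geelen, Gerards, Reed, Seymour and Vetta), but it only yields colorings with $O(t\sqrt{\log t})$ or, in the best current bound cited in this paper, $O(t\log\log t)$ colors --- nowhere near $t-1$. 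The ``splice inductive colorings of the torsos'' step and the ``reroute to produce an odd $K_t$ minor'' step are precisely the parts that nobody knows how to do, and you say as much yourself in your final paragraph. A proof attempt that ends by identifying its own central step as ``the genuine research problem hidden inside this conjecture'' has not proved anything.

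To be clear about why this is a gap and not merely an incomplete write-up: the small-separation regluing argument you describe already breaks down for ordinary Hadwiger once the separator is large relative to $t$, and the parity bookkeeping you correctly flag (signed structure on adhesion sets, bipartite double covers) makes the odd variant strictly harder, not reducible to the even case. Nothing in this paper bears on the general conjecture; the paper's actual contributions are verifications of the conjecture for restricted classes (graphs with $\alpha(G)\le 2$ under various chromatic or forbidden-subgraph hypotheses, in particular $\{3K_1,\text{paraglider}\}$-free graphs), obtained by combining a lower bound on $\omega(G)$ with Theorem \ref{thm-ohlow} and Lemma \ref{n26} --- a route entirely different from, and much more modest than, the structural attack you sketch.
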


This conjecture is referred to as ``Odd Hadwiger's Conjecture''. It is far stronger than Hadwiger's conjecture. The case $t \leq 4$ was proved by Catlin \cite{cat} in 1978. Guenin \cite{gue} announced a solution of the case $t = 5$ at a meeting in Oberwolfach in 2005. It remains open for $t \geq 6$. For graphs with no odd $K_{t}$-minor, the current best known upper bound of chromatic number is $O(t\log \log t)$ due to Steiner \cite{s}.

In a $\chi(G)$-coloring of $G$, each color-class has size at most $\alpha(G)$. Thus, Hadwiger's conjecture implies the following weaker version.

\begin{conj}[Duchet and Meyniel \cite{dm}]\label{conj-D-M}
Any graph $G$ of order $n$  has a $K_{\lceil n/\alpha(G) \rceil}$  minor.
\end{conj}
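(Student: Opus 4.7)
The plan is to exhibit $t := \lceil n/\alpha(G) \rceil$ pairwise adjacent, vertex-disjoint connected subgraphs $B_1, \ldots, B_t$ of $G$ (the \emph{branch sets}); contracting each to a point then gives $K_t$ as a minor. As noted in the paragraph preceding the statement, the pigeonhole bound $\chi(G)\cdot \alpha(G) \geq n$ forces $\chi(G) \geq t$, so the conclusion is immediately implied by \cref{conj-hadwiger}. Since the goal is an unconditional argument, the first move I would try is to construct the branch sets directly rather than route through chromatic number.

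A greedy construction is natural: having built pairwise adjacent connected sets $B_1, \ldots, B_k$, let $B_{k+1}$ be a connected subgraph of $G - \bigcup_{i\le k} B_i$ of minimum size that is adjacent to each earlier $B_i$. The key lemma I would try to establish is that, as long as $k < t$, such a $B_{k+1}$ of size at most $\alpha(G)$ exists; the intuition is that the set of ``blockers'' (vertices with no neighbour in some chosen $B_i$) should be forced to be independent. Without the size-$\alpha$ control this line of thought produces only the classical Duchet--Meyniel bound $K_{\lceil n/(2\alpha(G)-1)\rceil}$, which comes from extracting at each step a shortest path in a suitable auxiliary graph and accepting branch sets of average size up to $2\alpha(G)-1$.

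The main obstacle is halving the branch-set size from $2\alpha(G)-1$ to $\alpha(G)$. For the case $\alpha(G) = 2$ most relevant to this paper the difficulty is mild: $\overline{G}$ is triangle-free, every two non-adjacent vertices share a neighbour, and one may take all branch sets of size at most two by a matching/augmenting-path argument on the non-edges of $G$ (the Plummer--Stiebitz--Toft theorem). For $\alpha(G) \geq 3$ I would attempt a fractional relaxation of the branch decomposition followed by rounding, but I expect this to fall short; closing the factor-two gap between $\lceil n/(2\alpha-1)\rceil$ and $\lceil n/\alpha\rceil$ is precisely the content of the conjecture and has resisted every technique introduced so far, so I would not expect a short new proof.
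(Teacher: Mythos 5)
This statement is Conjecture~\ref{conj-D-M}: the paper does not prove it, it only records it as the weakening of Conjecture~\ref{conj-hadwiger} obtained from the pigeonhole bound $\chi(G)\geq \lceil n/\alpha(G)\rceil$, so there is no proof in the paper to compare against, and you are right not to claim one. Your sketch of the greedy branch-set construction and of why it only yields the classical $K_{\lceil n/(2\alpha(G)-1)\rceil}$ minor of Duchet--Meyniel is accurate, and your candidate ``key lemma'' (that one can always extend by a connected set of size at most $\alpha(G)$ meeting all previous branch sets) is indeed not a lemma at all but essentially a restatement of the conjecture, as you acknowledge.

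However, one concrete assertion in your proposal is wrong and should not stand: the claim that the case $\alpha(G)=2$ is ``mild'' and is settled by a matching/augmenting-path argument due to Plummer, Stiebitz and Toft. For $\alpha(G)=2$ the conjecture asks for a $K_{\lceil n/2\rceil}$ minor, and this is known to be \emph{equivalent} to Hadwiger's conjecture for graphs of independence number two (this is exactly the ordinary-minor analogue of Theorem~\ref{thm-ohc-alpha=2}; see \cite{pst} and Seymour's survey \cite{sem}), which is wide open and is in fact the case Seymour singles out as the heart of the problem. What Plummer, Stiebitz and Toft actually proved is Hadwiger's conjecture for $\alpha=2$ graphs that exclude some fixed four-vertex induced subgraph, not the general $\alpha=2$ case; unconditionally, even for $\alpha(G)=2$ the best known clique-minor guarantee is only of order $n/3$ (from \cite{dm}, and $\lceil n/3\rceil$ for odd minors from \cite{ks}), with only small constant-factor improvements since. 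So your proposal is a reasonable account of why the conjecture is hard, but it is not a proof, and its one claimed complete case is misattributed and remains open.
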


Let $oh(G)$ denote the largest integer $\ell$ such that $G$ has an odd $K_\ell$ minor. Odd Hadwiger's Conjecture states that $oh(G)\geq \chi(G)$ for every graph $G$. In \cite{JSWZ}, the first author, Song, Weiss and the fourth author posed an odd-minor variant of Duchet-Meyniel Conjecture.

\begin{conj}[Ji, Song, Weiss, and Zhang \cite{JSWZ}]\label{conj-ODM}
For any graph $G$ of order $n$, $oh(G) \geq \lceil n/\alpha(G) \rceil$.
\end{conj}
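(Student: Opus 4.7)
The plan is to adapt Duchet and Meyniel's greedy packing strategy --- which in the ordinary setting yields the weaker $K_{\lceil n/(2\alpha(G)-1)\rceil}$ minor for their own conjecture --- to the odd-minor setting. At the global level I would first fix a spanning bipartite subgraph $B \subseteq G$, say via a max-cut or a more careful construction, inducing a bipartition $V(G) = X \cup Y$. Every connected subgraph of $B$ is then automatically properly $2$-coloured by $X \cup Y$, so each branch tree of the desired odd $K_\ell$ minor can be taken inside $B$ and will automatically satisfy the bichromatic-edge condition inside the branch set.

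With $X \cup Y$ fixed, I would greedily build vertex-disjoint trees $T_1, T_2, \dots, T_\ell$ inside $B$ subject to the joint conditions (i) every pair $T_i, T_j$ is joined by at least one edge of $G$, and (ii) every edge of $G$ between distinct $T_i$ and $T_j$ is monochromatic with respect to $X \cup Y$. Taking a maximal such collection, I would try to mimic the Duchet--Meyniel proof: charge each unpacked vertex $v \notin \bigcup_i T_i$ to a tree $T_{i(v)}$ that $v$ cannot be added to, and then show that the set of vertices charged to any single $T_i$ is independent in $G$. Summing the resulting size bounds over the $\ell$ trees would give $n \le \ell \cdot \alpha(G)$, i.e., $oh(G) \ge \ell \ge \lceil n/\alpha(G)\rceil$.

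The main obstacle is that condition (ii) is global rather than local: extending $T_i$ by a neighbour $v$ is restricted not only by connectivity to $T_i$ but by the colours of every existing edge from $v$ to every other $T_j$. In the classical Duchet--Meyniel setting, maximality forces each unpacked $v$ to meet every $T_i$, and a careful ordering of the trees makes the charged set independent; in the odd-minor setting, the right-colour neighbour in a given $T_i$ may simply not exist, so this charging breaks. Overcoming the obstacle seems to demand either a very delicate initial choice of $X \cup Y$ tailored to the structure of the eventual charged sets, or a local recolouring lemma that permits swapping colours inside a branch tree without spoiling external monochromaticity. Without new ideas I expect the greedy method to deliver only a bound of the form $oh(G) \ge \lceil n/(2\alpha(G)-1)\rceil$; closing the gap to the conjectured denominator $\alpha(G)$ will likely require either structural input (for example, using the restricted theorems of the present paper as base cases for an induction on $\alpha(G)$) or an entirely non-greedy argument, perhaps routed through Steiner's $O(t \log \log t)$ bound on the chromatic number of graphs with no odd $K_t$ minor.
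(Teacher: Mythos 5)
The statement you were asked about is a \emph{conjecture}, not a theorem: the paper offers no proof of it, and it is open in general (indeed, even the weaker non-odd version, Conjecture~\ref{conj-D-M} of Duchet and Meyniel, is open, since it is implied by Hadwiger's conjecture and no unconditional proof is known). The paper only verifies special cases indirectly, e.g.\ via Theorem~\ref{thm-ohc-alpha=2} the conjecture for $\alpha(G)\le 2$ is equivalent to Odd Hadwiger's Conjecture for such graphs, and the paper then confirms it for particular $H$-free classes. So there is no ``paper's own proof'' to compare against, and any complete proof you produced would be a new result.

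Your proposal is not such a proof, and to your credit you say so yourself: you concede that the greedy Duchet--Meyniel adaptation is expected to deliver only $oh(G)\ge\lceil n/(2\alpha(G)-1)\rceil$, which is exactly the Kawarabayashi--Song theorem already cited in the paper, not the conjectured bound with denominator $\alpha(G)$. The concrete gap is the one you identify: in the classical argument, maximality of the tree packing forces every unpacked vertex to have a neighbour in every tree, and the charging argument then bounds the number of unpacked vertices charged to each tree by $\alpha(G)$ (after accounting for the tree's own vertices, which is where the factor $2\alpha(G)-1$ rather than $\alpha(G)$ already enters even in the non-odd setting). In the odd setting the additional parity constraint on inter-tree edges means an unpacked vertex may be adjacent to a tree only via wrong-colour edges, so it can be neither absorbed nor used as a connection, and the charged sets need not be independent. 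Fixing a spanning bipartite subgraph in advance does not repair this, because the bipartition that makes one pair of trees monochromatically joined may be incompatible with another pair. Closing the gap from $2\alpha(G)-1$ to $\alpha(G)$ is precisely the open content of the conjecture; your sketch correctly locates the obstruction but does not overcome it, so the statement remains unproved.
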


In 2007, Kawarabayashi and Song \cite{ks} proved that any graph $G$ on $n$ vertices has an odd $K_{\lceil n/ (2\alpha(G)-1) \rceil}$ minor.
Given a graph $H$, we say that $G$ is an  $inflation$ of $H$ if $G$ can be obtained from $H$ in such a way that each vertex $v$ of $H$ is replaced by a complete graph $K^{v}$ and every vertex of $K^{v}$ is joined to all vertices of $K^{u}$ in $G$ if $uv \in E(H)$. We denote such a graph $G$ by $\mathbb{K}[H]$.
Song and Thomas \cite{st} in 2017 showed that $oh(G)\geq \chi(G)$ if graph $G$ with $\alpha(G) \geq 2$ is $\{C_4, C_5,  \dots,  C_{2\alpha(G)}\}$-free, or $G$ is an inflation of an odd cycle.

One particular interesting case of Hadwiger's conjecture is when graphs have independence number two. It has attracted more attention (see Section 4 in Seymour's survery \cite{sem} for more information). As stated in his survey, Seymour believes that if Hadwiger's conjecture is true for graphs $G$ with $\alpha(G) = 2$, then it is probably true in general. Some scholars have verified Hadwiger's conjecture for $H$-free graphs $G$ on $n$ vertices with $\alpha(G) = 2$, when $H$ is any 4-vertices graph by Plummer, Stiebitz and Toft \cite{pst}, any 5-vertices graph by Kriesell \cite{kri}, 5-wheel on six vertices by Bosse \cite{bo}, any of the 33 graphs on seven, eight, nine vertices, or $K_8$ by Carter \cite{car} (computer-assisted).

In \cite{JSWZ}, it was shown that Odd Hadwiger's Conjecture and Conjecture \ref{conj-ODM} are equivalent for graphs with independence number at most two.
\begin{thm}[\cite{JSWZ}]\label{thm-ohc-alpha=2}
Let $G$ be a graph on $n$ vertices with $\alpha(G) \leq 2$. Then
\begin{center}
$oh(G)\geq\chi(G)$ if and only if $oh(G)\geq \lceil n/2\rceil$.
\end{center}
\end{thm}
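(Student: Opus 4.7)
Plan: I handle the two implications separately. The forward direction is immediate: because $\alpha(G) \leq 2$, every color class in a proper coloring has size at most $2$, so $\chi(G) \geq n/\alpha(G) \geq n/2$; as $\chi(G)$ is an integer this forces $\chi(G) \geq \lceil n/2 \rceil$, hence $oh(G) \geq \chi(G)$ gives $oh(G) \geq \lceil n/2 \rceil$ for free.

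For the converse, I would assume $oh(G) \geq \lceil n/2 \rceil$ and aim to deduce $oh(G) \geq \chi(G)$. The key structural input is that for $\alpha \leq 2$ graphs one has $\chi(G) = n - \mu(\overline{G})$, since every color class is either an edge of $\overline{G}$ or a singleton, and the best coloring therefore corresponds to a maximum matching in the triangle-free complement. Fix a maximum matching $M = \{u_1v_1, \ldots, u_mv_m\}$ in $\overline{G}$ with unmatched set $U$; maximality of $M$ forces $U$ to induce a clique in $G$, one computes $\chi(G) = m + |U|$ and $\lceil n/2 \rceil = m + \lceil |U|/2 \rceil$, and since $|U|$ has the same parity as $n$ the case $|U| \leq 1$ already gives $\chi(G) = \lceil n/2 \rceil$, so the equivalence is immediate. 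Hence I assume $|U| \geq 2$; the task is to exhibit $\lfloor |U|/2 \rfloor$ extra branch-sets beyond the assumed odd $K_{\lceil n/2 \rceil}$ minor, reaching size $m + |U| = \chi(G)$.

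The construction I would pursue leverages $\alpha(G) \leq 2$ directly: for any matched pair $\{u_i, v_i\}$ (which is a non-edge of $G$) and any third vertex $w$, at least one of $wu_i, wv_i$ must be an edge of $G$, providing many candidate connectors through which a branch-tree containing both $u_i$ and $v_i$ may be routed. One would declare each vertex of $U$ its own singleton branch set coloured identically (permitted because $U$ is a clique) and, for each matched pair, form a tree joining $u_i, v_i$ via such a connector, colouring $u_i, v_i$ and all of $U$ one colour while the connectors receive the opposite colour, so that tree edges become bichromatic and inter-tree edges monochromatic. The main obstacle, and the place where the hypothesis $oh(G) \geq \lceil n/2 \rceil$ must enter essentially, is that connectors cannot be chosen freely: every vertex of $G$ already belongs to a candidate branch-set, so one must globally reorganise the trees to maintain disjointness and two-colouring consistency (in particular, no edge between two pair-trees may end up bichromatic). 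I expect the argument to proceed by treating the given odd $K_{\lceil n/2 \rceil}$ minor as a scaffold and applying a Gallai--Edmonds-style analysis of the triangle-free graph $\overline{G}$ together with a rerouting argument that exchanges vertices between the seed minor and the new pair-trees, until an odd $K_{\chi(G)}$ minor is realised.
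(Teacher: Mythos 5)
Your forward direction is correct and complete, and your setup for the converse (the identity $\chi(G)=n-\mu(\overline{G})$ for $\alpha\leq 2$, the unmatched set $U$ being a clique, $\lceil n/2\rceil=m+\lceil |U|/2\rceil$, and the reduction to the case $|U|\geq 2$ where $\lfloor |U|/2\rfloor$ additional branch sets are needed) is sound. Note that the paper itself offers no proof to compare against: Theorem \ref{thm-ohc-alpha=2} is quoted from \cite{JSWZ}, so your argument has to stand on its own. It does not: the entire nontrivial content of the converse is deferred to an unspecified ``Gallai--Edmonds-style analysis together with a rerouting argument,'' which is precisely the step that needs to be proved. As written, this is a plan, not a proof.

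Moreover, the concrete construction you sketch cannot be repaired locally. First, a counting obstruction: your proposed system consists of $|U|$ singletons plus, for each of the $m$ matching edges of $\overline{G}$, a tree on $\{u_i,v_i\}$ together with a connector, i.e.\ at least $|U|+3m=n+m>n$ vertices in pairwise disjoint branch sets whenever $m\geq 1$ --- impossible. Since an odd $K_{\chi(G)}$ minor has $n-m$ branch sets inside $n$ vertices, at most $m$ of them can have size $\geq 2$, and any size-two branch set must be an \emph{edge of $G$}, not a non-edge $\{u_i,v_i\}$ of $G$ with a connector; so the minor you need is structurally not ``the matching of $\overline{G}$ with connectors,'' and the hypothesis $oh(G)\geq\lceil n/2\rceil$ must be used in an essential, global way that you never specify. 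Second, a smaller but real slip: to join $u_i$ and $v_i$ through a third vertex $w$ you need $w$ adjacent to \emph{both} of them, whereas $\alpha(G)\leq 2$ only guarantees adjacency to at least one. Finally, a sanity check that the gap is genuine: if your unconditional construction (singletons on $U$ plus connected pair-trees with a consistent two-colouring) could be carried out, it would yield $oh(G)\geq\chi(G)$ for \emph{every} graph with $\alpha(G)\leq 2$, i.e.\ it would prove the Odd Hadwiger Conjecture in this case, which is open; so the assumed odd $K_{\lceil n/2\rceil}$ minor cannot merely be a ``scaffold'' --- the mechanism by which it is upgraded by $\lfloor |U|/2\rfloor$ branch sets is exactly what is missing.
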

In the same paper \cite{JSWZ}, the following two results had been obtained.

\begin{thm}[\cite{JSWZ}]\label{thm-ohlow}
Let $G$ be a graph on $n$ vertices with $\alpha(G) \leq 2$. If

\begin{center}
$\omega(G) \geq \left\{
                 \begin{array}{ll}
                   n/4, & \hbox{when n is even,} \\
                   (n+3)/4, & \hbox{when n is odd,}
                 \end{array}
               \right.$
\end{center}
then $oh(G) \geq \chi(G)$.

\end{thm}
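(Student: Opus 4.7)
By Theorem~\ref{thm-ohc-alpha=2} it suffices to exhibit an odd $K_{\lceil n/2 \rceil}$-minor in $G$. Let $K$ be a maximum clique of $G$ with $|K|=k := \omega(G)$. Write $V(G) \setminus K = A \cup B$, where
\[
B = \{v \in V\setminus K : N(v)\cap K = \emptyset\}, \qquad A = (V\setminus K)\setminus B.
\]
A quick check shows that $B$ is a clique: two non-adjacent vertices of $B$ together with any $x \in K$ would form an independent set of size three, contradicting $\alpha(G)\leq 2$.

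The target minor uses each vertex of $K$ as a singleton branch set, all coloured with colour $1$. By the maximality of $K$, no vertex of $V\setminus K$ is adjacent to every vertex of $K$. From this one deduces that every additional branch set $T\subseteq V\setminus K$ must contain at least three vertices. Indeed, in the $2$-colouring required by the odd-minor definition: (i) every colour-$2$ vertex of $T$ lies in $B$, since otherwise it would be joined to some $K$-singleton by a bichromatic edge; (ii) $T$ must contain at least one colour-$2$ vertex in order to admit the bichromatic within-tree edges; and (iii) the colour-$1$ vertices of $T$ must jointly dominate $K$, so that each $K$-singleton is joined to $T$ by a monochromatic (colour-$1$) edge, and since no single vertex of $V\setminus K$ dominates $K$, at least two of these colour-$1$ vertices must lie in $A$.

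The size-three lower bound on outside branch sets yields the counting inequality
\[
\lceil n/2 \rceil \;\leq\; k + \frac{n-k}{3},
\]
which rearranges to precisely the hypothesis $k \geq n/4$ when $n$ is even and $k \geq (n+3)/4$ when $n$ is odd. The bulk of the proof will realize this counting by constructing $\lceil n/2\rceil - k$ pairwise disjoint triples $\{u_1, u_2, b\}\subseteq V\setminus K$ with $u_1, u_2 \in A$, $b \in B$, $u_1 b, u_2 b \in E(G)$, and $N(u_1)\cup N(u_2)\supseteq K$. The between-tree coloring constraint then reduces to forbidding any such $b$ from being adjacent to a colour-$1$ vertex of another triple, since the $B$--$B$ edges between distinct triples are monochromatic for free thanks to the clique structure of $B$.

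The main obstacle is this last packing-and-avoidance step: producing the required disjoint triples simultaneously. I plan to set it up as a matching/selection problem on the bipartite graph between $A$ and $B$, whose solvability will be extracted from the triangle-freeness of $\overline{G[V\setminus K]}$ via a Hall-type argument, the pairs of $A$-vertices being constrained to jointly dominate $K$. Degenerate cases where $B$ is small or where the $A$--$B$ adjacency is too restrictive are either incompatible with the hypothesis or reducible to cases in which $k\geq \lceil n/2\rceil$ already, so that no outside branch sets are needed; for $n$ odd the slightly stronger hypothesis $k\geq(n+3)/4$ absorbs the parity cost of a single leftover vertex by allowing a direct modification of one triple.
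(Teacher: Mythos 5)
The paper gives no proof of this theorem at all --- it is imported verbatim from \cite{JSWZ} --- so your argument has to stand on its own, and as written it does not: it is a plan whose central step is missing. The reduction via Theorem~\ref{thm-ohc-alpha=2} to producing an odd $K_{\lceil n/2\rceil}$ minor is correct, and so is the observation that, once the maximum clique $K$ is used as singleton branch sets of colour~1, every further branch set needs at least three vertices (at least two colour-1 vertices, since no single vertex outside $K$ dominates $K$, plus at least one colour-2 vertex, since a tree with an edge is properly 2-coloured); hence the hypothesis is exactly the threshold making $k+(n-k)/3\ge\lceil n/2\rceil$. But this only shows your template is not excluded by counting. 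The theorem is precisely the assertion that $\lceil n/2\rceil-k$ pairwise disjoint such branch sets exist, and that packing step --- disjoint triples whose two $A$-vertices jointly satisfy $N(u_1)\cup N(u_2)\supseteq K$ --- is deferred to an unspecified ``Hall-type argument'' with no lemma stated, no Hall condition verified, and no control of how the joint-domination constraint interacts with disjointness. That is the entire difficulty of the result.

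Moreover, the template itself is infeasible on instances the theorem covers. You force the middle vertex of every triple into $B$, so you implicitly need $|B|\ge\lceil n/2\rceil-k$, which the hypothesis does not give; $B$ can be empty. For $G=\overline{C_7}$ one has $\alpha(G)=2$, $\omega(G)=3\ge (n+3)/4$ and $\lceil n/2\rceil-\omega(G)=1$, yet every vertex outside a maximum clique has a neighbour in it, so no triple of your form exists (the complement of the Petersen graph behaves the same way with $n$ even); this also refutes your fallback claim that such degenerate cases are incompatible with the hypothesis or reduce to $k\ge\lceil n/2\rceil$. The root cause is an over-strict reading of the odd-minor definition: as used in \cite{ks,JSWZ}, only the designated joining edge between each pair of trees must be monochromatic, not every edge between trees, so a colour-2 vertex adjacent to $K$ is harmless, and neither the requirement $b\in B$ nor your ``avoidance'' condition between triples is needed. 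Indeed, in $\overline{C_7}$ the branch sets $\{1\},\{3\},\{5\}$ together with the path on $\{6,2,7\}$ (centre $2$ coloured~2) form an odd $K_4$ minor even though $2$ has neighbours in the clique. Under the correct definition the task simplifies (triples $u_1$--$b$--$u_2$ with $\{u_1,u_2\}$ dominating $K$ and, say, the centres pairwise adjacent), but the existence of such a packing under the stated clique-number hypothesis is still entirely unproved, so the proposal has a genuine gap.
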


\begin{thm}[\cite{JSWZ}]
Let $G$ be a graph on $n$ vertices with $\alpha(G) \leq 2$. For any induced subgraph $H'$ of $H$, if graph $G$ is $H'$-free, then $oh(G)\geq \chi(G)$, where
$$H \in \{ K_{1} + P_4, K_{2} + (K_1 \cup K_{3}), K_{1} + (K_1 \cup K_{4}), K_7^-, K_7, kite\}.$$
\end{thm}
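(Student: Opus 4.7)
The plan is to reduce to Theorem~\ref{thm-ohlow}: for each choice of $H$ in the list, it suffices to show that every $H$-free graph $G$ with $\alpha(G)\le 2$ already satisfies $\omega(G)\ge n/4$ when $n$ is even and $\omega(G)\ge (n+3)/4$ when $n$ is odd, so that Theorem~\ref{thm-ohlow} then delivers $oh(G)\ge\chi(G)$. Note that $H'$-free implies $H$-free whenever $H'\sqsubseteq H$, so it is enough to handle the weakest assumption $H' = H$ in each family.

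Ramsey theory supplies the first tool: $\alpha(G)\le 2$ is equivalent to $\overline{G}$ being triangle-free, hence $n< R(3,\omega(G)+1)$. For $H=K_7$ this immediately gives $n\le R(3,7)-1 = 22$, so the conclusion follows from the finite verifications cited in the introduction (Plummer--Stiebitz--Toft, Kriesell, Bosse, Carter). A similar Ramsey-based reduction, combined with case analysis on whether $G$ contains a $K_7$ subgraph, handles $H=K_7^-$.

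For the structural cases $H\in\{K_1+P_4,\ K_2+(K_1\cup K_3),\ K_1+(K_1\cup K_4),\ \mathrm{kite}\}$, I would fix a maximum clique $K=\{v_1,\dots,v_\omega\}$ of $G$ and study, for each $u\in V(G)\setminus K$, its non-neighbor set $M(u):=K\setminus N_G(u)\subseteq K$. Since $K$ is a clique, each $M(u)$ is itself a clique; by maximality of $K$, $M(u)\ne\emptyset$; and since $\alpha(G)\le 2$, any two $u,u'\in V(G)\setminus K$ with $M(u)\cap M(u')\ne\emptyset$ must be adjacent (otherwise a common non-neighbor would complete an independent triple with them). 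These facts provide a clean combinatorial handle on the structure outside $K$. The $H$-free hypothesis will then be used to bound how many distinct sets $M(u)$ can occur and the size of each equivalence class: typically, picking representatives $u,u'$ in two different equivalence classes and combining them with a few vertices of $K$ will either produce a forbidden induced copy of $H$ or enlarge $K$, ultimately yielding $n\le 4\omega$ with the parity refinement.

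The main obstacle will be the sparsest case $H=K_1+(K_1\cup K_4)$, since its forbidden-subgraph pattern carries the weakest structural information. Here one expects to need a careful combination of (i)~a Ramsey-type clique extraction inside each $M$-equivalence class, using that $\alpha\le 2$ there forces a sub-clique of size at least $\sqrt{|\cdot|/2}$, and (ii)~a direct search for the forbidden $K_1+(K_1\cup K_4)$ across pairs of equivalence classes, each such pair producing the required independent vertex, the dominating vertex, and the $K_4$ from within $K$. Once the clique lower bound is established for every $H$ in the list, Theorem~\ref{thm-ohlow} completes the proof.
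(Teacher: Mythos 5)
Your top-level strategy (reduce everything to Theorem~\ref{thm-ohlow} by proving a clique lower bound of the form $\omega(G)\ge n/4$, resp.\ $(n+3)/4$) is the natural one, and indeed the paper only quotes this theorem from \cite{JSWZ}, where it is proved by exactly this kind of reduction together with structural analysis of a maximum clique. But as written, your proposal is a plan rather than a proof, and the one case you treat concretely contains a genuine error. For $H=K_7$ you correctly get $\omega(G)\le 6$ and $n\le R(3,7)-1=22$, but you then invoke the verifications of Plummer--Stiebitz--Toft, Kriesell, Bosse and Carter. Those results concern Hadwiger's conjecture for \emph{ordinary} clique minors, for specific forbidden induced subgraphs; they are not verifications of the odd-minor statement $oh(G)\ge\chi(G)$, nor are they statements about all graphs with $\alpha\le 2$ on at most $22$ vertices, so they do not yield the conclusion. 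The correct step available in this framework is the Ramsey argument of Lemma~\ref{n26}: $\omega(G)\le 6$ already forces $oh(G)\ge\chi(G)$ via Theorem~\ref{thm-ohlow}, with no appeal to outside verifications. Similarly, for $K_7^-$ you assert that ``a similar Ramsey-based reduction'' works, but being $K_7^-$-free does \emph{not} bound $\omega(G)$ (a large clique contains no induced $K_7^-$), so the case genuinely requires the structural analysis (e.g.\ every vertex outside a maximum clique $K$ has at most four neighbours in $K$ once $\omega\ge 7$), which you do not carry out.

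The same criticism applies, more seriously, to the four structural cases $K_1+P_4$, $K_2+(K_1\cup K_3)$, $K_1+(K_1\cup K_4)$ and the kite: your description of the sets $M(u)=K\setminus N_G(u)$ and the observation that vertices with intersecting non-neighbour sets must be adjacent are correct and are the right starting point, but the actual content of the theorem is precisely the case-by-case argument showing that the forbidden induced subgraph forces $n\le 4\omega(G)$ (with the parity refinement $n\le 4\omega(G)-3$ for odd $n$, which you would also need to address explicitly, since Theorem~\ref{thm-ohlow} requires $\omega\ge(n+3)/4$ there, not just $n/4$). Phrases such as ``typically \dots will either produce a forbidden induced copy of $H$ or enlarge $K$'' and ``one expects to need'' signal that none of these derivations has been done, and it is not evident a priori that each of the six hypotheses yields the needed bound without substantial work (this is exactly what \cite{JSWZ} supplies). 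So the proposal identifies the right reduction but leaves the essential steps unproved, and its only completed case rests on an inapplicable citation.
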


When a $3K_1$-free graph $G$ has chromatic number upper-bounded by $c\omega(G)$ for some constant $c$, we can show that the class of such graphs $G$ satisfy Odd Hadwiger's Conjecture. First, we obtain the following result.

\begin{thm}\label{thm-oh-low2}
Let $G$ be a graph with $\alpha(G) \leq 2$. If
\begin{center}
$\chi(G) \leq \left\{
                 \begin{array}{ll}
                   2\omega(G), & \hbox{when n is even,} \\
                   9\omega(G)/5, & \hbox{when n is odd,}
                 \end{array}
               \right.$
\end{center}
then $oh(G) \geq \chi(G)$.
\end{thm}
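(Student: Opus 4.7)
The plan is to deduce the theorem from Theorem \ref{thm-ohlow} by using the chromatic hypothesis to force $\omega(G)$ to be large. The key extra input is the elementary bound $\chi(G) \geq \lceil n/\alpha(G)\rceil$, which comes from the fact that in any proper coloring each color class is an independent set of size at most $\alpha(G)$, so the $\chi(G)$ color classes together account for at most $\chi(G)\,\alpha(G)$ vertices. Under our hypothesis $\alpha(G) \leq 2$, this gives $\chi(G) \geq \lceil n/2\rceil$.

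The even case is immediate: $\chi(G) \leq 2\omega(G)$ combined with $\chi(G) \geq n/2$ gives $\omega(G) \geq n/4$, which is exactly the hypothesis of Theorem \ref{thm-ohlow} when $n$ is even; that theorem then delivers $oh(G) \geq \chi(G)$.

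For the odd case, $\chi(G) \leq 9\omega(G)/5$ and $\chi(G) \geq (n+1)/2$ give $\omega(G) \geq 5(n+1)/18$. To apply Theorem \ref{thm-ohlow} we need $\omega(G) \geq (n+3)/4$, but the purely algebraic inequality $5(n+1)/18 \geq (n+3)/4$ only holds for $n \geq 17$. For smaller odd $n$ one uses that $\omega(G)$ is an integer: writing $n = 4k+1$ one must check $\lceil (10k+5)/9 \rceil \geq k+1$, and writing $n = 4k+3$ one must check $\lceil (10k+10)/9 \rceil \geq k+2$; both are routine for every $k \geq 0$. Thus $\omega(G) \geq \lceil (n+3)/4 \rceil$, and Theorem \ref{thm-ohlow} finishes the odd case as well.

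The main obstacle, such as it is, is the small-$n$ odd case: the constant $9/5$ is calibrated so that the integrality of $\omega(G)$ precisely bridges the algebraic gap for every odd $n \geq 1$; once this is noticed the proof reduces to routine arithmetic.
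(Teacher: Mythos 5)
Your proof is correct, and for the odd case it takes a genuinely different (and more self-contained) route than the paper. Both arguments share the same skeleton: use $\alpha(G)\leq 2$ to get $\chi(G)\geq\lceil n/2\rceil$, convert the chromatic hypothesis into a lower bound on $\omega(G)$, and feed that into Theorem \ref{thm-ohlow}. The difference is how the small odd $n$ are handled. The paper argues by contradiction, bounds $\omega(G)\geq 5n/18$, and invokes Lemma \ref{n26} (whose proof rests on Ramsey-number facts such as $R(3,\omega+1)\leq 4\omega-1$ for $\omega\leq 6$) to conclude $n\geq 27$, at which point $5n/18\geq(n+3)/4$ algebraically. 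You instead use the slightly sharper bound $\chi(G)\geq(n+1)/2$ for odd $n$, giving $\omega(G)\geq 5(n+1)/18$, and close the gap below $n=17$ purely by integrality of $\omega(G)$; your two ceiling inequalities indeed reduce to $(10k+5)/9>k$ and $(10k+10)/9>k+1$, which hold for every $k\geq 0$, and I spot-checked the tight cases ($n=13,17,31,35$), which all pass. What each approach buys: yours avoids Lemma \ref{n26} and any Ramsey input entirely, making Theorem \ref{thm-oh-low2} an elementary corollary of Theorem \ref{thm-ohlow}; the paper's detour through Lemma \ref{n26} is not wasted in context, since that lemma is reused for Theorem \ref{thm-t1} and Corollary \ref{qhw}, but it is not needed here, as your argument shows.
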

\begin{proof}
Suppose $oh(G) < \chi(G)$. By $\alpha(G) \leq 2$ and
\begin{center}
$\chi(G) \leq \left\{
                 \begin{array}{ll}
                   2\omega(G), & \hbox{when $n$ is even,} \\
                   9\omega(G)/5, & \hbox{when $n$ is odd,}
                 \end{array}
               \right.$
\end{center}
we have that $\omega(G) \geq  n/4$ when $n$ is even; $\omega(G) \geq 5n/18$ when $n$ is odd. By the assumption and Lemma \ref{n26}, we know that $n \geq 27$ when $n$ is odd, which means that $\omega(G) \geq 5n/18 \geq (n+3)/4$ when $n$ is odd. By Theorem \ref{thm-ohlow}, $oh(G) \geq \chi(G)$, a contradiction.
\end{proof}

For any positive integers $k$ and $\ell$,  $Ramsey\ number$ $r= R(k, \ell)$ is the smallest integer such that all graphs on $r$ vertices contain either an independent set of $k$ vertices or a clique of $\ell$ vertices.

By the Kim's famous result \cite{kim} that the Ramsey number $R(3, \omega)$ has order of magnitude $\omega^{2}/ \log \omega$, $f_{3K_{1}} = \Theta(\omega^{2}/ \log \omega)$. Chudnovsky and Seymour \cite{cse} showed that if $G$ is a connected $K_{1,3}$-free graph with $\alpha(G) \geq 3$, then $\chi(G) \leq 2\omega(G)$ and this bound is asymptotically optimal. As noted in \cite{cks}, the problem of finding an optimal $f_{3K_{1}}$ is open. In Table 1, the known best chromatic bounds are shown for a $\{3K_1, H\}$-free graph $G$, where $H$ is some $3K_1$-free graph. We refer to \cite{ss} for an extensive survey on $\chi$-boundedness.

\begin{center}
\resizebox{0.6\columnwidth}{!}{
\begin{tabular}{c|c|c|c}
\hline
  $H$ & chromatic upper bound for $G$ &&\\ \hline
  &&&\\
   $2K_2$     & non-linear & $\divideontimes$ &  \cite{brsv}\\
   &&&\\
  $W_4$ / paraglider & $2\omega(G)$ &$\divideontimes$& \cite{cks} \\
  &&&\\
   $K_5$ & $f_{3K_1} \leq 8$ &$\divideontimes$&  \cite{gya}\\
   &&&\\
   $C_5$ & $\omega(G)^{\frac{3}{2}}$ & $\divideontimes$& \cite{hoa}\\
   &&&\\
  $(K_1 \cup K_2) + K_2$ / $K_5 - e$ & $\omega(G)+1$ &  $\checkmark$&    \cite{dhu, kie}\\
  &&&\\
  $C_4$ & $\lceil \frac{5\omega(G)}{4}\rceil$ &$\checkmark$& \cite{cs} \\
  &&&\\
  $K_1 + P_4$ & $\lceil \frac{5\omega(G)}{4} \rceil$ &$\checkmark$ & \cite{cks}\\
  &&&\\
  $(K_1 \cup K_3) + K_1$ / house / kite  & $\lfloor \frac{3\omega(G)}{2}\rfloor$  &  $\checkmark$& \cite{cks}\\
  &&&\\
  $K_1 \cup K_4$ & $ \lfloor  \frac{3\omega(G)}{2} \rfloor$ &$\checkmark$ & \cite{hlr}\\
  &&&\\
  paraglider & $ \max\{\omega(G)+3, 2\omega(G)-2\}$ &$\checkmark$& [this paper] \\
    &&&\\
  \hline
\end{tabular}}\\
 \small\begin{quote}
Table 1. The symbol `/' denotes `or', the symbol `$\checkmark$' indicates that the bounds are optimal, and the symbol `$\divideontimes$' indicates that the problem of finding an optimal bound is open.
 \end{quote}
\end{center}

\begin{figure}[ht]
\begin{center}
        \includegraphics[scale=0.2]{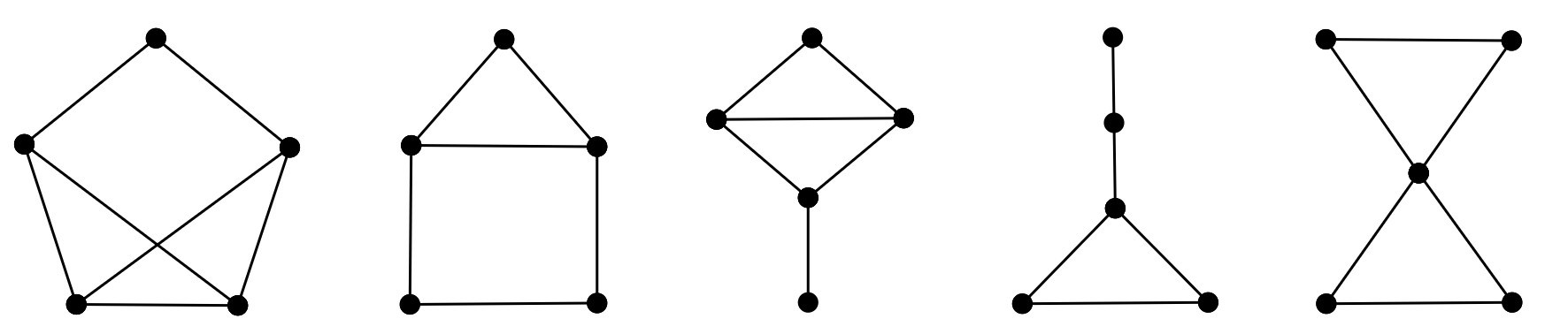}

         \caption{paraglider, house, kite, hammer and butterfly.}
         \label{fig1}
\end{center}
\end{figure}

A $paraglider$, denoted by $\overline{P_2 \cup P_3}$, is the graph that consists of a cycle $C_4$ plus an additional vertex adjacent to three vertices of the $C_4$. In this paper, we first focus on the description on the structure of $\{3K_1, \overline{P_2 \cup P_3}\}$-free graphs (see Lemmas \ref{p2-p3 1}$-\ref{p2-p3 5})$. Next we obtain a tight lower bound of clique number for the class of such graphs (Theorem \ref{thm-p2p31}). Then we confirm Odd Hadwiger's Conjecture for them (Theorem \ref{thm-t1}). In particular, we derive an optimal linear $\chi$-binding function for such class of graphs (Theorem \ref{thm-paraglider-boundedness}).

\begin{thm}\label{thm-p2p31}
Let $G$ be a $\{3K_1, \overline{P_2 \cup P_3}\}$-free graph on $n$ vertices. Then $\omega(G) \geq (n-1)/3$, and $\omega(G) = (n-1)/3$ if and only if $G \cong \overline{H^{*}}$ $($see graph $H^{*}$ in Figure $2)$.
\end{thm}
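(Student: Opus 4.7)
The natural first move is to pass to the complement. Set $H=\overline{G}$; then $H$ is triangle-free and $(P_2\cup P_3)$-free, and $\omega(G)=\alpha(H)$, so the statement is equivalent to: every triangle-free $(P_2\cup P_3)$-free graph $H$ on $n$ vertices has $\alpha(H)\geq (n-1)/3$, with equality iff $H\cong H^{*}$. As a preliminary reduction, I would analyse the components of $H$: if some component of $H$ carries an induced $P_3$, then every other component must be $P_3$-free (otherwise combining a $P_3$ from one component with a $P_2$ from another gives an induced $P_2\cup P_3$), and being also triangle-free, must be a $K_1$ or $K_2$. Writing $H=C_0\cup M$ with $C_0$ either empty or a component containing an induced $P_3$ and $M$ a disjoint union of $K_1$'s and $K_2$'s, one has $\alpha(M)\geq\lceil|M|/2\rceil$, which is strictly larger than $|M|/3$ whenever $|M|\geq 1$. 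Equality in the desired bound therefore forces $M=\emptyset$, so it suffices to prove the lower bound for a connected $G$ and to identify $\overline{H^{*}}$ as the unique connected extremal graph.

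For the connected case I would invoke Lemmas~\ref{p2-p3 1}--\ref{p2-p3 5}, which classify connected $\{3K_1,\overline{P_2\cup P_3}\}$-free graphs into a short list of structural families. The plan in each family is to take a maximum clique $K$ of $G$ and bound $|T|:=|V(G)\setminus K|$. For every $v\in T$ the non-neighbourhood $\overline N_K(v):=K\setminus N(v)$ is nonempty (else $K\cup\{v\}$ would be a larger clique). By $3K_1$-freeness, any two vertices $u,v\in T$ that share a common non-neighbour in $K$ must be adjacent in $G$, which strongly restricts the possible patterns $\overline N_K(v)$ and how many $v$'s can share a given pattern; paraglider-freeness then rules out the remaining configurations with too many vertices outside $K$. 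A case-by-case count within each structural family yields the uniform bound $|T|\leq 2|K|+1$, which is exactly $\omega(G)\geq (n-1)/3$.

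Finally, for the equality characterization I would revisit each structural family with the extra requirement that every inequality used above be tight. In all but one family some step is forced to be strict (typically because the paraglider constraint allows strictly fewer vertices of $T$ than the generic count), and equality is impossible; in the remaining family the parameters are pinned down rigidly, and the resulting graph can be identified by inspection with $\overline{H^{*}}$. I expect the equality analysis to be the main obstacle: every application of $3K_1$- or paraglider-freeness used in the bound on $|T|$ must be tracked carefully so that tightness at the end can be traced back to a unique isomorphism type, and degenerate small-$|K|$ cases (where the structural lemmas may admit several near-extremal graphs) must be ruled out by direct inspection against the explicit graph $H^{*}$ of Figure~2.
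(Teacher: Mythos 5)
Your reduction steps are fine and essentially parallel the paper: passing to $H=\overline{G}$, and observing that extra components of $H$ (which, once one component contains a $P_3$, can only be isolated vertices, not $K_2$'s) make the bound strict, so one may assume $H$ connected (you write ``connected $G$'', but it is connectivity of $H$ that your argument reduces to; the paper handles this via its Remark on disconnected imperfect $H$).

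The genuine gap is that the substance of the theorem is asserted, not proved. Your target bound $|T|\le 2|K|+1$ for a maximum clique $K$ of $G$ is just a restatement of $n\le 3\omega(G)+1$, and the only concrete tools you bring to it ($K\setminus N(v)\neq\emptyset$ for $v\in T$, and that two vertices of $T$ with a common non-neighbour in $K$ are adjacent) use only $3K_1$-freeness; these alone cannot yield a linear bound on $|T|$, since $3K_1$-free graphs can have $n$ of order $\omega^2/\log\omega$. The whole content lies in how paraglider-freeness cuts this down, and your ``case-by-case count within each structural family'' is never carried out. Moreover, the framework does not mesh with the lemmas you plan to invoke: Lemmas~\ref{p2-p3 1}--\ref{p2-p3 5} do not classify all connected graphs in the class; they describe the adjacencies among the parts $S_0=V(C_5)$, $A_i$, $B_i$, $S_2$ of an \emph{imperfect} connected $H\in\mathcal{H}$, the induced $C_5$ coming from Theorem~\ref{thm-p1}. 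Hence the perfect case needs a separate (easy) argument via Theorem~\ref{thm-p2}, which your plan omits, and nothing explains how a maximum clique of $G$ (a maximum independent set of $H$) sits relative to that partition. The paper instead argues directly in $H$: it runs a case analysis on the number $h_B$ of nonempty $B_i$'s and in each case exhibits three pairwise disjoint independent sets covering $V(H)$ (or, in the exceptional subcases controlled by Lemmas~\ref{p2-p3 2}--\ref{p2-p3 5}, a single independent set of size $>n/3$), with the only tight configuration being $H\cong H^*$ at $n=16$, and it completes the equality statement by separately verifying $\alpha(H^*)=5$ through an analysis of maximum independent sets of $H^*$. Both this covering/counting argument and the uniqueness analysis --- which you yourself flag as the main obstacle --- are missing, so the proposal is a plausible outline rather than a proof.
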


\begin{thm}\label{thm-t1}
Let $G$ be a $\{3K_1, \overline{P_2 \cup P_3}\}$-free graph. Then $oh(G) \geq  \chi(G)$.
\end{thm}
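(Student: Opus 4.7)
The plan is to reduce the statement to Theorem~\ref{thm-ohlow} by feeding in the structural lower bound on $\omega(G)$ supplied by Theorem~\ref{thm-p2p31}. Since $G$ is $3K_1$-free we have $\alpha(G)\le 2$, so Theorem~\ref{thm-ohlow} is applicable and it suffices to verify that $\omega(G)\ge n/4$ when $n$ is even, and $\omega(G)\ge (n+3)/4$ when $n$ is odd. Theorem~\ref{thm-p2p31} provides $\omega(G)\ge (n-1)/3$, and an elementary arithmetic check shows $(n-1)/3\ge n/4$ exactly when $n\ge 4$, while $(n-1)/3\ge (n+3)/4$ exactly when $n\ge 13$.

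Consequently, the result follows at once from Theorem~\ref{thm-ohlow} whenever $n$ is even with $n\ge 4$, and whenever $n$ is odd with $n\ge 13$. The case $n=2$ is trivial since then $G$ is an induced subgraph of $K_2$ and $oh(G)=\chi(G)$. The only remaining cases are $n$ odd with $n\in\{1,3,5,7,9,11\}$, where the bound $(n-1)/3$ from Theorem~\ref{thm-p2p31} falls just short of the required $(n+3)/4$.

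To handle these few small odd orders the plan is to upgrade the lower bound on $\omega(G)$ using classical Ramsey numbers. Since $\alpha(G)\le 2$, the values $R(3,3)=6$ and $R(3,4)=9$ force $\omega(G)\ge 3$ whenever $n\ge 6$, and $\omega(G)\ge 4$ whenever $n\ge 9$. These estimates already meet the threshold $(n+3)/4$ for $n\in\{7,9,11\}$. The three remaining tiny orders $n\in\{1,3,5\}$ are dispatched directly: for $n\le 3$ the graph is either complete or clearly satisfies $\chi(G)=\omega(G)=oh(G)$, and for $n=5$ the threshold $(n+3)/4=2$ is met because $3K_1$-freeness forces $\omega(G)\ge 2$.

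The proof is therefore a combination of two previously proved results (Theorems~\ref{thm-p2p31} and~\ref{thm-ohlow}) and a short arithmetic check. The only place where genuine care is needed is the narrow odd window $n\in\{5,7,9,11\}$, in which the bound of Theorem~\ref{thm-p2p31} is slightly weaker than what Theorem~\ref{thm-ohlow} demands; this is the main, and only, obstacle, and it is closed by the small-order Ramsey estimates above.
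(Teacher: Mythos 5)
Your proposal is correct and takes essentially the same route as the paper: reduce to Theorem~\ref{thm-ohlow} via the clique bound of Theorem~\ref{thm-p2p31}, using the arithmetic $(n-1)/3\ge (n+3)/4$ for odd $n\ge 13$ (and $(n-1)/3\ge n/4$ for even $n\ge 4$). The only difference is in the leftover small orders, which the paper dispatches by citing Lemma~\ref{n26} while you close them by hand with $R(3,3)=6$, $R(3,4)=9$ and trivial checks; this is equally valid, as Lemma~\ref{n26} itself rests on exactly such Ramsey estimates.
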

\begin{proof}
When $n \geq 13$, there is $(n-1)/3 \geq (n+3)/4$. Therefore, the conclusion follows immediately from Theorem \ref{thm-ohlow}, Theorem \ref{thm-p2p31} and Lemma \ref{n26}.
\end{proof}

\begin{thm}\label{thm-paraglider-boundedness}
Let $G$ be a $\{3K_1, \overline{P_2 \cup P_3}\}$-free graph. Then $\chi(G) \leq \max\{\omega(G)+3, 2\omega(G)-2\}$. More precisely, when $G$ is an imperfect graph, we have
\begin{center}
$\chi(G)  \leq  \left\{
                 \begin{array}{ll}
                    2\omega(G)-2, & \hbox{if $H \in \mathcal{B}_1$,}\\
                    3\omega(G)/2 -1, & \hbox{if $H \in \mathcal{B}_2$,}\\
                   \omega(G) +3, & \hbox{otherwise,}\\
                 \end{array}
               \right.$
\end{center}
where $H$ is the imperfect component of $\overline{G}$ and $\mathcal{B}_1$, $\mathcal{B}_2$ are defined in Definitions \ref{B 1}, \ref{B 2}, respectively. In particular, the bounds $2\omega(G)-2$, $3\omega(G)/2 -1$ and $\omega(G) +3$ are tight, respectively.

\end{thm}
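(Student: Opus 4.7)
The plan is to decompose $G$ via the connected components of $\overline{G}$ and reduce to a single imperfect component. Since $\alpha(G)\le 2$, the complement $\overline{G}$ is triangle-free, and a triangle-free graph is perfect iff it is bipartite. Writing the components of $\overline{G}$ as $H_1,\ldots,H_k$, we have $G = \overline{H_1}+\cdots+\overline{H_k}$, so $\chi(G)=\sum_i\chi(\overline{H_i})$ and $\omega(G)=\sum_i\omega(\overline{H_i})$. I would first show that at most one $H_i$ is non-bipartite: two non-bipartite components would each contain an odd hole of length $\ge 5$ and hence an induced $P_3$, and combining a $P_3$ from one component with an edge from the other produces an induced $P_2\cup P_3$ in $\overline{G}$, contradicting the hypothesis that $G$ is $\overline{P_2\cup P_3}$-free. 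So either $G$ is perfect (in which case $\chi(G)=\omega(G)$ and each bound in the theorem is trivial) or $\overline{G}$ has a unique imperfect component $H$.

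Because the perfect components of $\overline{G}$ contribute equally to $\chi(G)$ and $\omega(G)$, it suffices to establish the bounds at the component level, namely
\begin{center}
$\chi(\overline{H}) \le \left\{\begin{array}{ll} 2\omega(\overline{H})-2, & \text{if } H\in\mathcal{B}_1,\\ 3\omega(\overline{H})/2 - 1, & \text{if } H\in\mathcal{B}_2,\\ \omega(\overline{H})+3, & \text{otherwise.}\end{array}\right.$
\end{center}
Adding back the common $\omega$-contribution of the perfect components then lifts the inequality to the $\omega(G)$-form stated in the theorem. For the component-level bounds I would invoke the structural Lemmas~\ref{p2-p3 1}--\ref{p2-p3 5}: triangle-freeness combined with $(P_2\cup P_3)$-freeness forces any odd hole of $H$ to be a $C_5$ (any longer odd hole already contains an induced $P_2\cup P_3$), and every vertex of $H$ outside a fixed $C_5$ must attach to it in a very restricted way. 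These lemmas partition the possible shapes of $H$ into the two distinguished families $\mathcal{B}_1$, $\mathcal{B}_2$ and a bounded list of sporadic types. For each type I would read off an explicit clique cover of $\overline{H}$: a $\mathcal{B}_1$-member is essentially a $C_5$-blow-up whose complement admits a partition into two cliques of size $\omega(\overline{H})-1$, producing the sharp $2\omega-2$ bound; a $\mathcal{B}_2$-member admits a balanced three-clique partition giving $3\omega/2-1$; and every remaining configuration permits a coloring built from one clique of size near $\omega(\overline{H})$ plus a constant-sized leftover that can be properly colored with at most four extra colors.

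The main obstacle will be the combinatorial case analysis turning each structural description into an explicit near-optimal coloring of $\overline{H}$, and in particular verifying that precisely the members of $\mathcal{B}_1$ — and no others — actually force the weaker $2\omega-2$ bound rather than a linear-in-$\omega+\mathrm{const}$ improvement. I would conclude by exhibiting extremal examples to establish tightness: a suitable $C_5$-blow-up witnessing $\chi(G)=2\omega(G)-2$, a balanced blow-up witnessing $\chi(G)=3\omega(G)/2-1$, and a small carefully constructed $\{3K_1,\overline{P_2\cup P_3}\}$-free graph witnessing $\chi(G)=\omega(G)+3$, confirming that each of the three bounds is sharp.
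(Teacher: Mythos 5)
Your global reduction is sound and is essentially the paper's: pass to $H=\overline{G}$ (triangle-free because $G$ is $3K_1$-free), note via the Strong Perfect Graph Theorem that imperfection forces an induced $C_5$, observe that $P_2\cup P_3$-freeness allows at most one imperfect component of $\overline{G}$, and lift the component-level bounds through the join decomposition $\chi(G)=\sum_i\chi(\overline{H_i})$, $\omega(G)=\sum_i\omega(\overline{H_i})$. That part is fine and matches Section 5 of the paper (which even notes the stronger fact that all other components are isolated vertices).

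However, the actual content of the theorem is the component-level statement $\theta(H)\le\alpha(H)+3$ for every imperfect connected $H\notin\mathcal{B}_1\cup\mathcal{B}_2$, together with the bounds $2\alpha(H)-2$ and $3\alpha(H)/2-1$ for $\mathcal{B}_1$ and $\mathcal{B}_2$, and this is exactly the part you defer (``the main obstacle will be the combinatorial case analysis''). Worse, the sketch you give of how that analysis would go is structurally wrong, which makes it doubtful it could be completed as described. Members of $\mathcal{B}_1$ are not ``essentially $C_5$-blow-ups'': Definition \ref{B 1} forces $S_2\neq\emptyset$, i.e.\ vertices at distance two from the $C_5$, as in the extremal graphs $\hat H_s$ of Example \ref{b1}; no inflation of $C_5$ has such vertices. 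Your claimed ``partition into two cliques of size $\omega(\overline{H})-1$'' cannot hold under either reading: cliques of $H$ have size at most $2$ (so two of them cover at most four vertices), while a partition of $\overline{H}$ into two cliques would make $H$ bipartite, contradicting $C_5\sqsubseteq H$. Likewise a ``balanced three-clique partition'' for $\mathcal{B}_2$-members is a $3$-coloring of $H$ and does not produce the count $3\alpha(H)/2-1$ for $\theta(H)$. Since $H$ is triangle-free, the bounds must come from covering $V(H)$ by single vertices and a carefully chosen system of matchings (between consecutive $B_j$'s, and between $S_2$ and the $B_j$'s), and the delicate bookkeeping — e.g.\ the relations $|S_2|=|L_i\cup L_{i+1}|-|M|$ in Case 2 and the comparison of $|R_i|$ with $|R_{i+1}|+|M|$ that isolates $\mathcal{B}_1$, or Claim 5.1 isolating $\mathcal{B}_2$ — is precisely what certifies that only these two families need the weaker bounds. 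None of this is present or correctly anticipated in your outline, so as it stands the proposal establishes only the easy reduction and the tightness examples, not the theorem.
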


\begin{figure}[ht]
\begin{center}
        \includegraphics[scale=0.2]{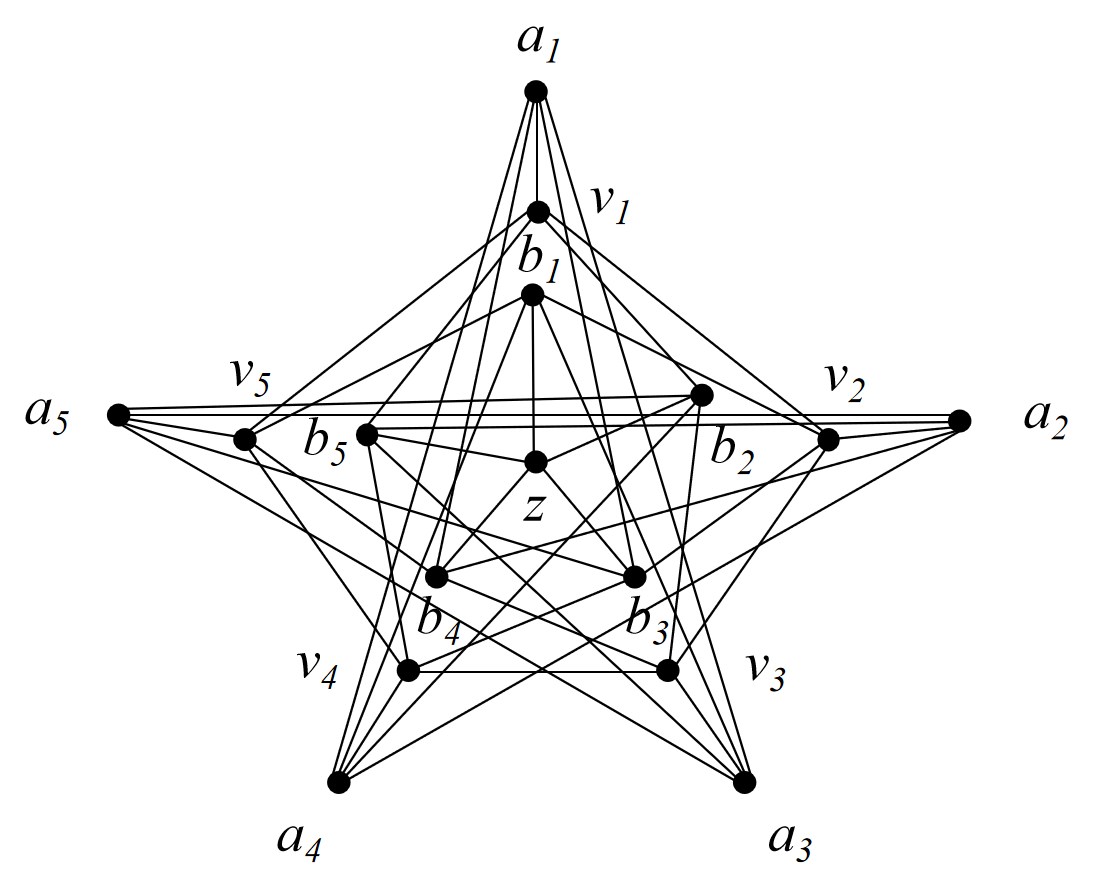}

\caption{$H^{*}.$}
\end{center}
\end{figure}

{\bf Remark.}
For an imperfect $\{3K_1, paraglider\}$-free graph $G$, by Theorem \ref{thm-p1}, $C_5 \sqsubseteq \overline{G}$. Noting that $\overline{G}$ is $P_2 \cup P_3$-free, if $\overline{G}$ is disconnected, then the component containing $C_5$ is imperfect, and the others form an independent set.

\section{Preliminaries}
\par
The following are two well-known results on perfect graphs.

\begin{thm}[\cite{lo}]\label{thm-p2}$(${\bf The Perfect Graph Theorem}$)$
A graph $G$ is perfect if and only if its complement is perfect.
\end{thm}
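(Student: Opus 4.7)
The plan is to follow Lov\'asz's original route to the Perfect Graph Theorem, via a self-complementary numerical characterization of perfect graphs. The preliminary step is the \emph{Replication Lemma}: if $G$ is perfect and $v \in V(G)$, then the graph $G_v$ obtained by adding a new vertex $v'$ with the same neighborhood as $v$ (a false twin of $v$) is again perfect. I would prove this by induction on $|V(G)|$: induced proper subgraphs of $G_v$ containing $v'$ arise as replications of smaller perfect graphs and are perfect by the inductive hypothesis, while for $G_v$ itself one has $\omega(G_v)=\omega(G)$ (since $v$ and $v'$ are nonadjacent) and any optimal coloring of $G$ extends by giving $v'$ the color of $v$. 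Iterating, any vertex of a perfect graph may be blown up into an independent set of arbitrary size while preserving perfection.

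The heart of the argument is the characterization: $G$ is perfect if and only if $\alpha(H)\,\omega(H) \geq |V(H)|$ for every induced subgraph $H \sqsubseteq G$. The forward implication is immediate, because an optimal coloring partitions $V(H)$ into independent sets each of size at most $\alpha(H)$, giving $\chi(H) \geq |V(H)|/\alpha(H)$, and perfection then forces $\chi(H)=\omega(H)$. For the converse I would argue by contradiction: let $G$ be a minimally imperfect graph in which the inequality holds on every induced subgraph. Using the Replication Lemma, blow up each vertex $v$ of $G$ a number of times determined by an enumeration of the maximum cliques and maximum independent sets of $G$, producing a graph $G'$ whose parameters $|V(G')|$, $\alpha(G')$ and $\omega(G')$ can be computed explicitly from this data. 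A direct count then yields $\alpha(G')\,\omega(G') < |V(G')|$, contradicting the hypothesis applied to $G'$ (which is an induced subgraph obtained within the class where the inequality was assumed to hold, after one first reduces to this setting via minimality).

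Once the characterization is established, the theorem follows instantly. The inequality $\alpha(H)\,\omega(H) \geq |V(H)|$ is manifestly invariant under complementation, since $\alpha(\overline{H})=\omega(H)$, $\omega(\overline{H})=\alpha(H)$, and $|V(\overline{H})|=|V(H)|$. Because the induced subgraphs of $\overline{G}$ are precisely the complements of the induced subgraphs of $G$, the graph $G$ satisfies the characterization if and only if $\overline{G}$ does, i.e., $G$ is perfect if and only if $\overline{G}$ is perfect.

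The main obstacle is the reverse implication of the characterization. The replication multiplicities must be chosen with care so that the parameters of the blown-up graph admit an exact accounting and a genuine numerical contradiction emerges; this is the only place where the Replication Lemma is used essentially, and it is where Lov\'asz's combinatorial insight is concentrated. Everything else in the proof (the forward implication of the characterization, and the passage from the characterization to the Perfect Graph Theorem itself) is essentially formal.
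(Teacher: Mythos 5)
The paper does not prove this statement: it is the classical Perfect Graph Theorem of Lov\'asz, quoted with a citation to \cite{lo} and used as a black box, so there is no proof in the paper to compare yours against. Judged on its own terms, your outline correctly identifies Lov\'asz's route (a Replication Lemma plus the self-complementary characterization $\alpha(H)\,\omega(H)\ge |V(H)|$ for all induced subgraphs $H$), and the formal parts --- the forward implication of the characterization, the complementation-invariance of the inequality, and the deduction of the theorem from the characterization --- are fine.

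The core step, however, is not correctly set up, in two respects. First, your Replication Lemma adds a \emph{nonadjacent} duplicate $v'$ (a false twin); that version is trivial (no clique contains both $v$ and $v'$, so $\omega$ is unchanged and $v'$ can reuse the color of $v$), but it is not the version the counting argument needs. Lov\'asz's lemma substitutes a \emph{clique} for a vertex (the new vertex is adjacent to $v$ and to all of $N(v)$), and proving that this preserves perfection is the actual content of the lemma: there $\omega$ may increase and one must exhibit an $\omega$-coloring of the enlarged graph, which is where the induction does real work. Second, and more seriously, the contradiction you describe does not close. If $G$ is minimally imperfect, then $G$ itself is not perfect, so the Replication Lemma cannot be applied to $G$ to conclude that the blow-up $G'$ is perfect; and $G'$ is not an induced subgraph of $G$, so the hypothesis that $\alpha(H)\omega(H)\ge|V(H)|$ for every induced subgraph $H$ of $G$ says nothing about $G'$. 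Your parenthetical claim that $G'$ ``is an induced subgraph obtained within the class where the inequality was assumed to hold'' is exactly the point that fails. The standard ways out are: (a) prove the Perfect Graph Theorem directly by induction on $|V(G)|$ with $G$ \emph{perfect}, so that clique-replication legitimately yields a perfect $G'$ and the contradiction is $\chi(G')\ge |V(G')|/\alpha(G') > \omega(G')$ against the perfection of $G'$; or (b) prove the converse of the characterization without replication, via a family of $\alpha\omega+1$ maximum independent sets and associated $\omega$-cliques and an incidence-matrix rank count. As written, your hybrid of these two arguments has a genuine hole at its center.
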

\begin{thm}[\cite{csrt}]\label{thm-p1}$(${\bf The Strong Perfect Graph Theorem}$)$
A graph  $G$ is perfect if and only if it does not contain $C_{2k+1}$ and $\overline{C_{2k+1}}$ as induced subgraphs for each integer $k \geq 2$.
\end{thm}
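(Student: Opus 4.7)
The plan is to use the Perfect Graph Theorem to reduce everything to a single imperfect component of $\overline{G}$, pin down its structure via the preceding lemmas, and then read off a coloring in each of the three structural types.

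First, if $G$ is perfect then $\chi(G) = \omega(G) \leq \omega(G)+3$, so the bound is immediate. Hence assume $G$ is imperfect. Since $\alpha(G) \leq 2$, $\overline{G}$ is triangle-free; since $G$ is $\overline{P_2 \cup P_3}$-free, $\overline{G}$ is $(P_2 \cup P_3)$-free. By Theorem \ref{thm-p2}, $\overline{G}$ is also imperfect, and Theorem \ref{thm-p1} produces an induced $C_{2k+1}$ or $\overline{C_{2k+1}}$ in $\overline{G}$. Being triangle-free rules out every $\overline{C_{2k+1}}$ with $k \geq 3$, while for $k \geq 3$ an induced $C_{2k+1}$ contains an induced $P_2 \cup P_3$ (take any edge of the cycle together with three consecutive vertices avoiding that edge), which is forbidden. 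Hence the only possibility is $C_5 \sqsubseteq \overline{G}$. As recorded in the Remark, this $C_5$ lives in a unique component $H$ of $\overline{G}$, and the remaining vertices of $\overline{G}$ form an independent set $K$, i.e., a clique of $G$ completely joined to $V(H)$ in $G$.

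Consequently $G$ is the join of $\overline{H}$ and $K$, so $\chi(G) = \chi(\overline{H}) + |K|$ and $\omega(G) = \omega(\overline{H}) + |K|$, and it suffices to prove the stated inequality for $\overline{H}$. Since $\overline{H}$ is itself $\{3K_1, \overline{P_2\cup P_3}\}$-free, imperfect, and has connected complement, the structural description supplied by Lemmas \ref{p2-p3 1}-\ref{p2-p3 5} applies and, anchored by the $C_5$, forces $\overline{H}$ to be built by attaching a short controlled list of cliques and handles to an inflation of the $C_5$-frame. These configurations are sorted exactly into the families $\mathcal{B}_1$, $\mathcal{B}_2$ of Definitions \ref{B 1}, \ref{B 2}, with the remaining configurations forming a third generic family.

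For each family I would then construct an explicit proper coloring of $\overline{H}$. When $H \in \mathcal{B}_1$, the rigidity forces two nearly equal maximum cliques whose union covers $V(\overline{H})$, yielding $2\omega(\overline{H})-2$ colors. When $H \in \mathcal{B}_2$, an additional symmetry between opposite sides of the $C_5$-frame saves roughly a quarter of the colors and gives $3\omega(\overline{H})/2 - 1$. In the generic case, after coloring a maximum clique, the leftover vertices split into at most three cliques each absorbable by a single fresh color, producing $\omega(\overline{H})+3$. Taking the maximum over the three families and lifting through $\chi(G) = \chi(\overline{H}) + |K|$ recovers the inequality for $G$. Tightness in each case is then exhibited by a concrete graph (a suitably inflated copy of $C_5$) belonging to the corresponding family. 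The main obstacle is the family-by-family case analysis tied to Definitions \ref{B 1}, \ref{B 2}: deciding precisely which of $\mathcal{B}_1$, $\mathcal{B}_2$, or the generic family $\overline{H}$ sits in, and calibrating the palette so that its size matches $2\omega-2$ and $3\omega/2-1$ exactly rather than only the easier $\omega+3$, is where the structural lemmas do the decisive work and where most of the technical effort will be concentrated.
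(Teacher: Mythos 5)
Your proposal does not address the statement at issue. The statement is Theorem~\ref{thm-p1}, the Strong Perfect Graph Theorem of Chudnovsky, Robertson, Seymour and Thomas: a graph is perfect if and only if it contains no induced odd hole $C_{2k+1}$ or odd antihole $\overline{C_{2k+1}}$ with $k\geq 2$. The paper does not prove this; it is quoted as an external result from \cite{csrt} (a roughly 150--page proof in the \emph{Annals of Mathematics}), and no argument of the kind you sketch could establish it. What you have written is instead an outline of the proof of Theorem~\ref{thm-paraglider-boundedness}, the $\chi$-binding bound $\chi(G)\leq\max\{\omega(G)+3,\,2\omega(G)-2\}$ for $\{3K_1,\overline{P_2\cup P_3}\}$-free graphs. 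Worse, your argument explicitly \emph{invokes} Theorem~\ref{thm-p1} (``Theorem~\ref{thm-p1} produces an induced $C_{2k+1}$ or $\overline{C_{2k+1}}$ in $\overline{G}$''), so even read as a proof of the statement it would be circular. The correct response for this statement is simply that it is a cited theorem with no proof given in the paper; if your intent was to prove Theorem~\ref{thm-paraglider-boundedness}, your outline is broadly aligned with the paper's strategy (pass to $\overline{G}$, isolate the imperfect component containing the $C_5$, and case-analyse via Lemmas~\ref{p2-p3 1}--\ref{p2-p3 5} and the families $\mathcal{B}_1$, $\mathcal{B}_2$), but it should be submitted against that theorem, not this one.
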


Lemma \ref{n26} is crucial to the proofs of Theorem \ref{thm-oh-low2}, Theorem \ref{thm-t1} and Corollary \ref{qhw}.
\begin{lem}\label{n26}
Let $G$ be a graph with $\alpha(G) \leq 2$. If $\omega(G) \leq 7$ or $n \leq 32$ when $n$ is even; $\omega(G) \leq 6$ or $n \leq 25$ when $n$ is odd,
then $oh(G) \geq \chi(G)$.
\end{lem}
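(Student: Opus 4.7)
The plan is to reduce Lemma~\ref{n26} to Theorem~\ref{thm-ohlow}, which gives $oh(G)\geq\chi(G)$ whenever $\omega(G)\geq n/4$ (for even $n$) or $\omega(G)\geq (n+3)/4$ (for odd $n$). Under the hypothesis $\alpha(G)\leq 2$, Ramsey's theorem supplies inequalities in both directions: an upper bound $\omega(G)\leq t-1$ forces $n\leq R(3,t)-1$, while $n\geq R(3,t)$ forces $\omega(G)\geq t$. Combined with the classical values $R(3,3)=6$, $R(3,4)=9$, $R(3,5)=14$, $R(3,6)=18$, $R(3,7)=23$, $R(3,8)=28$, this will let me verify the hypothesis of Theorem~\ref{thm-ohlow} in each of the four sub-cases of Lemma~\ref{n26}.

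For even $n$ with $\omega(G)\leq 7$, Ramsey gives $n\leq R(3,8)-1=27$, hence $n\leq 26$. Running through $\omega(G)\in\{2,\dots,7\}$ and using the corresponding Ramsey upper bound on $n$ (for example $\omega=7,n\leq 26$ gives $7\geq 26/4$; $\omega=6,n\leq 22$ gives $6\geq 22/4$; and so on down) confirms $\omega(G)\geq n/4$ in every sub-case. For even $n$ with $n\leq 32$, the Ramsey lower bound forces $\omega(G)\geq 8\geq n/4$ whenever $n\in\{28,30,32\}$, and the smaller even $n$ are resolved by the same table look-up.

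The odd-$n$ situation is symmetric, with $(n+3)/4$ in place of $n/4$. If $\omega(G)\leq 6$, then $n\leq R(3,7)-1=22$, so $n\leq 21$, and $\omega(G)\geq(n+3)/4$ holds in every sub-case (tight at $\omega=6,n=21$). If $n\leq 25$, then for $n\in\{23,25\}$ Ramsey gives $\omega(G)\geq 7\geq(n+3)/4$, with smaller odd $n$ handled analogously.

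Once the hypothesis of Theorem~\ref{thm-ohlow} is verified in every case, the conclusion $oh(G)\geq\chi(G)$ is immediate. The proof is essentially bookkeeping; the only mild obstacle is confirming that the thresholds $\omega\leq 7,n\leq 32$ (even) and $\omega\leq 6,n\leq 25$ (odd) are correctly matched to the Ramsey values $R(3,7)=23$ and $R(3,8)=28$, so that the ``small $\omega$ or small $n$'' disjunction covers exactly the range in which Theorem~\ref{thm-ohlow} can be invoked. These cutoffs are essentially the largest possible for this strategy, since for $n=34$ (even) or $n=27$ (odd) a Ramsey-extremal construction could have $\omega(G)$ one short of the required threshold.
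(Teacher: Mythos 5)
Your proposal is correct and takes essentially the same route as the paper: both reduce Lemma~\ref{n26} to Theorem~\ref{thm-ohlow} by exploiting $\alpha(G)\leq 2$ together with the classical Ramsey numbers $R(3,t)$ for $t\leq 8$. The paper merely packages your case-by-case verification as a contradiction argument using the uniform bounds $R(3,\omega(G)+1)\leq 4\omega(G)$ for $\omega(G)\leq 7$ and $R(3,\omega(G)+1)\leq 4\omega(G)-1$ for $\omega(G)\leq 6$, whereas you check the hypothesis of Theorem~\ref{thm-ohlow} directly in each sub-case.
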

\begin{proof}
Suppose $oh(G) < \chi(G)$. By Theorem \ref{thm-ohlow}, we see that

\begin{center}
$\omega(G) \leq \left\{
                 \begin{array}{ll}
                    \lceil n/4\rceil -1\leq (n-2)/4, & \hbox{when $n$ is even,}\\
                   \lceil (n+3)/4\rceil -1 \leq (n+1)/4, & \hbox{when $n$ is odd.}
                 \end{array}
               \right.$
\end{center}
Then
\begin{center}
$n \geq \left\{
                 \begin{array}{ll}
                    4 \omega(G) +2, & \hbox{when $n$ is even,}\\
                   4 \omega(G) -1, & \hbox{when $n$ is odd.}
                 \end{array}
               \right.$
\end{center}
Note that $R(3, \omega(G) +1) \leq 4\omega(G)$ when $\omega(G) \leq 7$; $R(3, \omega(G) +1) \leq 4\omega(G)-1$ when $\omega(G) \leq 6$. It means that if $n $ is even and $\omega(G) \leq 7$, or $n $ is odd and $\omega(G) \leq 6$, then $G$ contains a clique of order $\omega(G) +1$, a contradiction. Thus $8 \leq \omega(G) \leq (n-2)/4$ when $n$ is even; $7 \leq \omega(G) \leq (n+1)/4$ when $n$ is odd. Then $\omega(G) \geq 8$ and $n \geq 34$ when $n$ is even; $\omega(G) \geq 7$ and $n \geq 27$ when $n$ is odd, a contradiction.
\end{proof}

\section{The structure of $\{ K_3, P_2 \cup P_3\}$-free graphs}

Choudum, Karthick and Shalu  described the structure of $\{ K_3, P_2 \cup P_3\}$-free graphs in the proof of Theorem 10 in \cite{cks} (see the following (P1)-(P6) and (P8)-(P10) in Lemma \ref{p2-p3 1}). Here, we show more on the structure of such graphs.

Let $\mathcal{H}$ be a family of imperfect, connected and $\{ K_3, P_2 \cup P_3\}$-free graphs. Then $C_{5} \sqsubseteq H$ for any $H \in \mathcal{H}$ by Theorem \ref{thm-p1}. Without loss of generality, let $V(C_5)= \{v_{1}, v_{2}, v_{3}, v_{4}, v_{5}\} = S_0$, where $v_{1}, v_{2}, v_{3}, v_{4}, v_{5}$ in clockwise order on the $C_5$. When there is no confusion, we omit the subscription $H$ in notations $d_{H}(x)$, $N_{H} (x)$ and so on. For any $x \in V(H) \setminus S_{0}$, denote $d(x, S_{0}) = \min \{d(x,y): y \in S_{0}\}$, and $S_{i}= \{x \in V(H) \setminus S_{0}| d(x, S_{0}) = i\} $ for $i \geq 1$. Then, for any $x \in S_1$, we have $H[N(x) \cap S_{0}]\cong K_{1}$ or $2K_{1}$ by $H$ is $K_{3}$-free. Define
 $$A_{i} = \{x \in S_{1}| N(x)\cap S_{0} = \{v_{i}\}\} $$
and $$B_{i} = \{x \in S_{1}| N(x)\cap S_{0} = \{v_{i- 1}, v_{i + 1}\}\},$$
for each $i \in [5]$. All arithmetic on indices in the remaining proof is done under modulo 5.

Next we show the structure of graph $H$ by the following Lemmas \ref{p2-p3 1}--\ref{p2-p3 5}.

\begin{lem}\label{p2-p3 1}

For every $H \in \mathcal{H}$ and every $i\in [5]$, the following $(P1)$--$(P11)$ hold.\\
$(P1)$ $|A_{i}| \leq 1$;\\
$(P2)$ $B_{i}$ is an independent set;\\
$(P3)$ $A_{i} \nsim A_{i + 1}$, $A_{i} \sim  A_{i + 2}$;\\
$(P4)$ $A_{i} \nsim  B_{i}$, $A_{i} \sim B_{i - 2} \cup B_{i + 2}$;\\
$(P5)$ $A_{i}\nsim  B_{i- 1} \cup B_{i+ 1}$;\\
$(P6)$ $B_{i} \nsim  B_{i + 2}$;\\
$(P7)$ for each $b \in B_i$ and $j \in \{i-1, i+1\}$, $|N(b) \cap B_{j}| \geq |B_{j}| -1 $;\\
$(P8)$ if $A_{i} \neq \emptyset$, then $B_{i-2} \nsim  B_{i + 2}$;\\
$(P9)$ $S_{j} = \emptyset$ when $j \geq 3$, and $S_{2}$ is an independent set;\\
$(P10)$ $A_{i} \nsim  S_{2}$;\\
$(P11)$ for each $z \in S_{2}$, $|N(z) \cap B_{i} | \leq 1$; for each $y \in B_{i}$, $|N(y) \cap S_2| \leq 1$; thus $\sum_ {j = 1}^{5}|B_{j}|\geq |S_{2}|$.
\end{lem}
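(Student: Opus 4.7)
The plan is to prove each of (P1)--(P11) by contradiction, exhibiting in each failure case either an induced $K_3$ or an induced $P_2 \cup P_3$ in $H$, both forbidden by hypothesis. I would fix once and for all the induced $C_5$ on $S_0 = \{v_1,\ldots,v_5\}$ (guaranteed to exist by the Strong Perfect Graph Theorem together with $H$ being $K_3$-free) and use the partition $S_1 = \bigcup_{i \in [5]}(A_i \cup B_i)$, which is exhaustive because for any $x \in S_1$, $H[N(x) \cap S_0]$ is a $K_3$-free subgraph of $C_5$ and so consists of either a single vertex (giving $A_i$) or two non-consecutive $C_5$-vertices (giving $B_i$). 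The recurring geometric fact driving most of the new arguments is that for each $i$, the chord $v_{i-2}v_{i+2}$ is the unique edge of $C_5$ whose two endpoints are simultaneously non-neighbors of every vertex of $B_i$ and, once (P10) is in hand, of every vertex of $S_2$; this chord will play the role of the $P_2$-factor in most of the forbidden $P_2 \cup P_3$-witnesses constructed below.

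Properties (P1)--(P6) and (P8)--(P10) are those already established in \cite{cks}, and I would recover them by the same small-configuration template. For instance, two distinct $a, a' \in A_i$ either form a $K_3$ with $v_i$ or, if non-adjacent, yield an induced $P_2 \cup P_3$ on $\{a, v_i, a', v_{i+2}, v_{i+3}\}$ whose $P_3$-factor is $a, v_i, a'$ and whose $P_2$-factor is $v_{i+2}v_{i+3}$; this proves (P1). Properties (P2), (P5), (P6) and (P8) each collapse to an immediate $K_3$ anchored at a shared $C_5$-vertex, while (P3) and (P4) each produce a $P_2 \cup P_3$ of the same shape with $P_2$-factor $v_{i+2}v_{i+3}$. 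Properties (P9) and (P10) follow the same template applied to distance-two vertices.

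The novel contributions are (P7) and (P11). For (P7), I would suppose $b \in B_i$ has two non-neighbors $b_1, b_2 \in B_{i+1}$ (the case $j = i-1$ being symmetric under rotation of $C_5$); by (P2) also $b_1 b_2 \notin E(H)$. A direct adjacency check then shows that $\{b, v_{i-1}, b_1, v_{i+2}, b_2\}$ has exactly the three induced edges $b v_{i-1}$, $b_1 v_{i+2}$ and $b_2 v_{i+2}$, realizing an induced $P_2 \cup P_3$ with $P_2$-factor $b v_{i-1}$ and $P_3$-factor $b_1, v_{i+2}, b_2$, a contradiction. The first inequality of (P11) uses the same mechanism: if $z \in S_2$ has two $B_i$-neighbors $b_1, b_2$, then $\{z, b_1, b_2, v_{i-2}, v_{i+2}\}$ induces exactly $z b_1$, $z b_2$ and $v_{i-2}v_{i+2}$, a $P_2 \cup P_3$. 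For the second inequality, if $y \in B_i$ has two $S_2$-neighbors $z_1, z_2$, then (P9) forces $z_1 z_2 \notin E(H)$ and $\{y, z_1, z_2, v_{i-2}, v_{i+2}\}$ once again induces the forbidden $P_2 \cup P_3$. The third inequality of (P11) then follows by a double count of $[S_2, \bigcup_{j=1}^5 B_j]$: every $z \in S_2$ contributes at least one such edge (it has an $S_1$-neighbor by definition, which must lie in $\bigcup_j B_j$ by (P10)), while every $y \in \bigcup_j B_j$ absorbs at most one such edge by the second inequality, giving $\sum_{j=1}^5 |B_j| \geq |[S_2, \bigcup_{j=1}^5 B_j]| \geq |S_2|$.

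The principal difficulty is purely combinatorial: identifying, for each new property, the correct five-vertex test set. The unifying heuristic for (P7) and (P11) is to anchor the $P_2$-factor on the $C_5$-chord $v_{i-2}v_{i+2}$, whose endpoints are simultaneously non-adjacent to every vertex of $B_i$ and (via (P10)) of $S_2$. Once this insight is identified, each case reduces to a routine verification of the at most ten potential edges among five vertices.
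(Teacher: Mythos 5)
Your proposal is correct and takes essentially the same route as the paper: each property is obtained by exhibiting a forbidden induced $K_3$ or $P_2\cup P_3$, your witnesses for (P1) and (P11) coincide with the paper's (note $v_{i+3}=v_{i-2}$, so $v_{i-2}v_{i+2}$ is in fact an edge of the $C_5$, not a chord), your explicit witness $\{b,v_{i-1},b_1,v_{i+2},b_2\}$ for (P7) is a valid instantiation of what the paper leaves implicit, and the counting for $\sum_j|B_j|\ge|S_2|$ matches. One cosmetic slip in your summary of the known properties: for the completeness halves of (P3) and (P4) (e.g.\ $A_i\sim A_{i+2}$, $A_i\sim B_{i+2}$) no witness with $P_2$-factor $v_{i+2}v_{i+3}$ exists, since $a'$ resp.\ $b$ is adjacent to one of its endpoints---there the $P_2$ must be taken as $av_i$ (with $P_3$ such as $a'v_{i+2}$ extended or $v_{i+2}v_{i+3}b$), and likewise the triangle in (P8) is anchored at the vertex of $A_i$ via (P4) rather than at a shared $C_5$-vertex---but correct witnesses of the same general type exist, so nothing essential is affected.
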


\begin{proof}

(P1) If $|A_{i}| \geq 2$ for some $i \in [5]$, say $a, a' \in A_{i}$, then $a \nsim a'$ because $H$ is $K_3$-free. However, $H[\{v_{i-2}, v_{i+2}, a, v_i, a'\}] \cong P_{2} \cup P_{3}$, a contradiction.

Since $H$ is $K_3$-free, (P2), (P5) and (P6) hold. Moreover, (P3), (P4), and (P7) are true by $H$ being $P_{2} \cup P_{3} $-free.

(P8) By (P4), we see that $A_i \sim B_{i-2}$ $\cup$  $B_{i + 2}$. If $[B_{i-2},  B_{i + 2}] \neq \emptyset$, then $K_3 \sqsubseteq H[B_{i-2}\cup  B_{i + 2} \cup A_{i}]$, a contradiction.

(P9) If $S_{3} \neq \emptyset$, then there exist $z_{j} \in  S_{j}$ for $j = 2, 3$ such that $z_2 \sim z_3$, which means $H[\{z_{2}, z_{3}, v_{1}, v_{2}, v_{3}\}] \cong P_{2} \cup P_{3}$, a contradiction. Therefore, $S_{j} = \emptyset$, $j \geq 3$. If there exist two distinct vertices $z, z' \in S_{2}$ such that $z \sim z'$, then $H[\{z, z', v_{1}, v_{2}, v_{3} \}] \cong P_{2} \cup P_{3}$, a contradiction. Thus $S_{2}$ is an independent set when $S_2\neq \emptyset$.

(P10) If there exist $a \in A_{i}$ for some $i \in [5]$ and $z \in S_{2}$ such that $a\sim z$, then $H[\{a, z, v_{i + 1}, v_{i + 2},$ $ v_{i - 2}\}] \cong P_{2} \cup P_{3}$, a contradiction.

(P11) If there exists vertex $ z \in S_{2}$ such that $|N(z) \cap B_{i}| \geq 2$ for some $i \in [5]$, say two distinct vertices $y_{1},  y_{2} \in N(z) \cap B_{i}$, then $y_{1} \nsim y_{2}$ by (P2). But then $H[\{ v_{i - 2}, v_{i + 2}, y_{1}, z, y_{2} \}] \cong P_{2} \cup P_{3}$, a contradiction. So $|N(z) \cap B_{i} | \leq 1$ for each $i\in [5]$ and every $ z \in S_{2}$. If there exists $ y \in B_{i}$ such that $|N(y) \cap S_{2}| \geq 2$ for some $i \in [5]$, say two distinct vertices $z_{1}, z_{2} \in N(y) \cap S_{2}$, then $z_{1} \nsim z_{2}$ by (P9). But $H[\{v_{i - 2}, v_{i + 2}, z_{1},y, z_{2}\}] \cong P_{2} \cup P_{3}$, a contradiction. Thus $|N(y) \cap S_2| \leq 1$ for each $i\in [5]$ and any vertex $y \in B_{i}$. Furthermore, by (P10), we have $\sum_ {j = 1}^{5}|B_{j}|\geq |S_{2}|$.
\end{proof}

For simplicity, in the following, we omit ``in Lemma \ref{p2-p3 1}'' when we mention (P$j$) for $j \in [11]$. Let $h_A, h_B$ denote the number of non-empty sets in $\{A_j: 1\leq j\leq 5\}$, $\{B_j: 1\leq j\leq 5\}$, respectively. Then $0 \leq h_A, h_B \leq 5$. By (P1), $h_A = \sum_{j =1}^{5} |A_j|$. Moreover, for $i \in [5]$, let $A_i = \{a_i\}$ when $A_i\neq\emptyset$ by (P1), $b_{i} \in B_{i}$ when $B_i\neq\emptyset$, and let $z \in S_2$ when $S_2\neq\emptyset$.

\begin{definition}\label{B 1}
Let $\mathcal{B}_1$ denote the set of such graphs $H \in \mathcal{H}$ satisfying all of the following:

$\bullet$ $h_B =2$ and $S_2 \neq\emptyset$;

$\bullet$ for some $i \in [5]$, $B_{i}, B_{i+1}\neq\emptyset$, $[B_i, B_{i+1}] \neq \emptyset$ and $A_{i-2} = \emptyset$;

$\bullet$ $||B_i \setminus N(S_2)| - |B_{i+1} \setminus N(S_2)|| < |M|$, where $M \neq\emptyset$ is the set of missing edges between $N(S_2) \cap B_i$ and $N(S_2) \cap B_{i+1}$.
\end{definition}

\begin{definition}\label{B 2}
Let $\mathcal{B}_2$ denote the set of such graphs $H \in \mathcal{H}$ satisfying that $S_2 =\emptyset$, $h_A =0$, $h_B =5$, and for each $j\in [5]$, $[B_j, B_{j+1}] \neq \emptyset$.
\end{definition}

The following Lemmas \ref{p2-p3 2}--\ref{p2-p3 3} show more details on adjacent relation between $\bigcup_{j =1}^{5} B_j$ and $S_2$.
\begin{lem}\label{p2-p3 2}
When $S_{2} \neq \emptyset$, the following are true.

\begin{itemize}
  \item[\emph {(1)}] If $B_{i}, B_{i +2} \neq \emptyset$ and $[B_i \cup B_{i+2}, S_2] \neq \emptyset$  for some $i\in [5]$, then $|B_i|=|B_{i + 2}|=1$ and there exists vertex $z \in S_2$ such that $N(B_i) \cap S_2 = N(B_{i+2}) \cap S_2 = \{z\}$.

  \item[\emph {(2)}] If there exist $b_i \in B_{i}$ and $b_{i + 1} \in B_{i + 1}$ such that $b_i \nsim b_{i + 1}$ for some $i\in [5]$, then either $\{b_i, b_{i + 1}\} \nsim S_2$, or there exists vertex $z \in S_2$ such that $N(b_{i}) \cap S_2 = N(b_{i + 1}) \cap S_2 = \{z\}$.

  \item[\emph {(3)}] If $h_B=2$ and $B_i, B_{i+2} \neq \emptyset$  for some $i\in [5]$, or $B_{i-1}, B_i, B_{i+2}\neq\emptyset$ for some $i\in [5]$, then $|B_j|\leq 1$ for every $j \in [5]$, $\bigcup_{j =1}^{5} B_j$ is an independent set, and $S_2= \{z\} \sim \bigcup_{j =1}^{5} B_j$.
\end{itemize}
\end{lem}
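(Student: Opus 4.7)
The plan is to treat the three parts in sequence, using a single combinatorial tool—the forbidden induced $P_2\cup P_3$ on five carefully chosen vertices—and then converting adjacency into uniqueness via the counting bound (P11).

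For Part (1), I would suppose $b_i\in B_i$ and $z\in S_2$ satisfy $b_i\sim z$, and take an arbitrary $b_{i+2}\in B_{i+2}$. I would inspect $H[\{z,b_i,v_{i-2},v_{i+2},b_{i+2}\}]$. Using $N(b_i)\cap S_0=\{v_{i-1},v_{i+1}\}$, $N(b_{i+2})\cap S_0=\{v_{i+1},v_{i-2}\}$, $z$'s lack of neighbors in $S_0$, and (P6) for $b_i\nsim b_{i+2}$, the only edges forced inside this set are $zb_i$, $v_{i-2}v_{i+2}$, and $v_{i-2}b_{i+2}$; whether $zb_{i+2}$ is present is the only open question. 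If it is absent, what remains is exactly the edge $zb_i$ together with the path $v_{i+2}$-$v_{i-2}$-$b_{i+2}$, i.e.\ an induced $P_2\cup P_3$, which is forbidden. Hence $z\sim b_{i+2}$ for every choice of $b_{i+2}\in B_{i+2}$; combined with $|N(z)\cap B_{i+2}|\le 1$ from (P11) this forces $|B_{i+2}|=1$, and the symmetric argument forces $|B_i|=1$. The uniqueness statement $N(B_i)\cap S_2=N(B_{i+2})\cap S_2=\{z\}$ is then immediate from $|N(b_i)\cap S_2|\le 1$. Part (2) is carried out in the same spirit: assuming WLOG $z\sim b_i$, I would examine $H[\{z,b_i,v_{i-2},v_{i+2},b_{i+1}\}]$. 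Now $b_{i+1}$'s $S_0$-neighbors are $v_i,v_{i+2}$, and together with the hypothesis $b_i\nsim b_{i+1}$ and the usual non-edges from $S_2$ to $S_0$, the edges inside the set are $zb_i$, $v_{i-2}v_{i+2}$, $v_{i+2}b_{i+1}$, and possibly $zb_{i+1}$. Absence of $zb_{i+1}$ yields a forbidden $P_2\cup P_3$ (edge $zb_i$, path $v_{i-2}$-$v_{i+2}$-$b_{i+1}$), so $z\sim b_{i+1}$, and (P11) produces the uniqueness conclusion.

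Part (3) I would derive from Parts (1)--(2) together with (P10). In case (a), since $h_B=2$ and $S_2\neq\emptyset$, any $z\in S_2$ has no $A$-neighbor by (P10), so its $S_1$-neighbor must lie in $B_i\cup B_{i+2}$; Part (1) then gives $|B_i|=|B_{i+2}|=1$ with a distinguished $z$ adjacent to both, and any further $z'\in S_2$ attaches to $b_i$ or $b_{i+2}$, forcing $z'=z$ by (P11). In case (b), pick $z\in S_2$ and choose an $S_1$-neighbor $b\in B_{j_0}$ guaranteed by (P10). The key observation is that for each $j_0\in\{1,\dots,5\}$ at least one of the partners $B_{j_0+2},B_{j_0-2}$ belongs to $\{B_{i-1},B_i,B_{i+2}\}$ (here I use the mod-$5$ identity $i+4\equiv i-1$), so Part (1) applies to a nonempty index-distance-$2$ pair containing $B_{j_0}$ and propagates $z\sim b_{j_0}$ to a new nonempty $B_j$. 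Iterating this chaining argument along index-distance-$2$ pairs of nonempty $B_j$'s then shows that $z$ is adjacent to every vertex of $\bigcup_j B_j$ and that $|B_j|\le 1$ for each $j$. Given this, $K_3$-freeness of $H$ excludes any edge $bb'$ between vertices in different $B_j$'s, because $\{z,b,b'\}$ would be a triangle, so $\bigcup_j B_j$ is independent; and any $z'\in S_2$ is $S_1$-adjacent to some vertex in $\bigcup_j B_j$, so (P11) forces $z'=z$, giving $S_2=\{z\}$.

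The main obstacle I anticipate is the bookkeeping in case (b) of Part (3): one must verify that no matter which $B_{j_0}$ contains the initial $S_2$-neighbor, a sequence of Part (1) applications can reach every nonempty $B_j$ without getting stuck on an empty partner. This amounts to a short mod-$5$ distance check over the five possible values of $j_0$, but it is the step that requires the most care; Parts (1) and (2) themselves reduce cleanly to a single $P_2\cup P_3$ check each.
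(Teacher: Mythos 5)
Your proposal is correct and follows essentially the same route as the paper: each of (1) and (2) is settled by exhibiting a single induced $P_2\cup P_3$ on a 5-set drawn from $\{z\}$, the $B_j$'s and the $C_5$, then converting completeness to uniqueness via (P11), and (3) is deduced from (1)--(2) together with (P10) by chaining along index-distance-2 pairs, exactly as in the paper's proof. The mod-5 connectivity check you flag in (3)(b) does go through: since the nonempty indices contain $\{i-1,i,i+2\}$, every index has a distance-2 partner among these three, and the induced "pentagram" subgraph on the nonempty indices is connected, so the propagation of $z$ reaches every nonempty $B_j$.
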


\begin{proof}
(1)  By (P6), we see that $B_{i}\nsim B_{i + 2}$. W.l.o.g., assume that $[B_{i+2}, S_2] \neq \emptyset$, i.e. there exist $b_{i+2} \in B_{i+2}$ and $z \in S_2$ such that $b_{i+2} \sim z$. Then $\{z\} \sim B_i$, else $H[\{b_{i+2}, z, b_i,v_{i -1}, v_{i}\}] \cong P_{2} \cup P_{3}$ for some $b_i \in B_i$ satisfying that $z \nsim b_i$, a contradiction. Suppose $|B_{i+2}| \geq 2$, i.e. there exists $b_{i+2}' (\neq b_{i+2}) \in B_{i+2}$. Note that $|N(z) \cap B_{j} | \leq 1$ for every $j \in [5]$ and every $z \in S_2$ by (P11). By $b_{i+2} \sim z$, we have $b_{i+2}' \nsim z$. But then $H[\{b_i, z, v_{i+2}, v_{i-2}, b_{i+2}'\}] \cong P_{2} \cup P_{3}$, a contradiction. Thus $|B_{i+2}| =1$. If there exist two distinct vertices $b_i, b_i'\in B_i$, then $z \sim \{b_i, b_i'\}$ by $\{z\} \sim B_i$. But $H[\{v_{i-2}, v_{i+2}, b_i, z, b_{i}'\}] \cong P_{2} \cup P_{3}$, a contradiction. Thus $|B_i| =1$ and $B_i \cup B_{i+2} =\{b_i, b_{i+2}\} \sim z$. Furthermore, by (P11), we see that $|N(y) \cap S_2| \leq 1$ for every $j \in [5]$ and every $y \in B_{j}$. Thus there exists vertex $z \in S_2$ such that $N(B_i) \cap S_2 = N(B_{i+2}) \cap S_2 = \{z\}$.

(2) If one of $b_i, b_{i + 1}$ is anti-complete to $S_{2}$ and the other one has neighbors in $S_{2}$, w.l.o.g., let $\{b_i\}\nsim S_{2}$ and $b_{i + 1}\sim z$ for some $z \in S_2$, then $b_i \nsim z$. By $b_i \nsim b_{i+1}$, $H[ \{b_i, v_{i -1}, z, b_{i + 1}, $ $v_{i +2} \}] \cong P_{2} \cup P_{3}$, a contradiction. If both $b_i$ and $b_{i + 1}$ have neighbors in $S_{2}$, then $|N(b_i) \cap S_{2}|=|N(b_{i + 1}) \cap S_{2}|=1$ by (P11). Furthermore, if $N(b_i) \cap S_{2}\neq N(b_{i + 1}) \cap S_{2}$, say $N(b_i)\cap S_{2}= \{p\} $ and $ N(b_{i + 1}) \cap S_{2}= \{q\}$, then $p \neq q$, $b_{i + 1} \nsim p$ and $b_i \nsim q$. Moreover, $p \nsim q$ by (P9). By $b_i \nsim b_{i+1}$, $H[\{p, b_i, v_{i + 2}, b_{i + 1},q \}] \cong P_{2} \cup P_{3}$, a contradiction. Thus, either $\{b_i, b_{i + 1}\} \nsim S_2$, or there exists vertex $z\in S_2$ such that $N(b_i) \cap S_2 = N(b_{i + 1}) \cap S_2 = \{z\}$.

(3) If $h_B=2$ and $B_i, B_{i+2} \neq \emptyset$ for some $i\in [5]$, then $|B_i|= |B_{i+2}|= |S_{2}| = 1$ and $S_2 = \{z\} \sim B_i \cup B_{i+2}$ by $S_{2} \neq \emptyset$, (P10)-(P11) and Lemma \ref{p2-p3 2} $(1)$. Furthermore, $B_i \cup B_{i+2}$ is an independent set  by (P6).

If $B_{i-1}, B_i, B_{i+2} \neq\emptyset$ for some $i\in [5]$, then $[B_j, S_2] \neq \emptyset$ for some $j \in [5]$ by $S_{2} \neq \emptyset$ and (P10). Moreover, $B_{j-2}\cup B_{j+2} \neq \emptyset$. W.l.o.g., assume $B_{j+2} \neq \emptyset$. By Lemma \ref{p2-p3 2} $(1)$, $|B_{j}| =|B_{j+2}| = 1$ and there exists a vertex $z \in S_2$ such that $N(B_{j}) \cap S_2= N(B_{j +2}) \cap S_2 = \{z\}$. Since $B_{i-1}, B_i, B_{i+2} \neq\emptyset$ for some $i\in [5]$, we see that $B_{j-1} \cup B_{j-2} \neq\emptyset$. W.l.o.g., assume $B_{j-1} \neq \emptyset$. Then $|B_{j-1}| =|B_{j}|= |B_{j+2}| =1$ and $ N(B_{j -1}) \cap S_2 =N(B_{j}) \cap S_2= N(B_{j +2}) \cap S_2 = \{z\}$ by Lemma \ref{p2-p3 2} $(1)$. Next we consider the remaining two sets $B_{j +1}$ and $ B_{j-2}$. If $B_{j +1}\cup B_{j-2}= \emptyset$, then $S_2= \{z\}$ and $S_2 \sim B_{j-1}\cup B_{j}\cup B_{j+2}$. Recalling that $H$ is $K_3$-free, $B_{j-1}\cup B_{j}\cup B_{j+2}$ is an independent set of size three. If $B_{j +1}\cup B_{j-2} \neq \emptyset$, then $|B_j|\leq 1$ for every $j \in [5]$ and $\{z\} = S_2 \sim \bigcup_{j =1}^{5} B_j$ by Lemma \ref{p2-p3 2} $(1)$. Furthermore, $\bigcup_{j =1}^{5} B_j$ is an independent set because $H$ is $K_3$-free.
\end{proof}

\begin{lem}\label{p2-p3 3}
If one of the following is satisfied:
\begin{itemize}
  \item[\emph {(1)}] $h_B=2$, $B_{i-1}, B_i \neq \emptyset$ and $B_{i-1} \nsim B_{i}$ for some $i\in [5]$;

  \item[\emph {(2)}] $B_{i-1}, B_i, B_{i+1} \neq \emptyset$,  and $B_{i-1} \nsim B_{i}$ or $B_i \nsim B_{i+1}$, for some $i\in [5]$;
\end{itemize}
then $|B_j|\leq 1$ for each $j\in[5]$, $\bigcup_{j =1}^{5} B_j$ is an independent set, $|S_2| \leq 1$ and $S_2 \sim \bigcup_{j =1}^{5} B_j$.
\end{lem}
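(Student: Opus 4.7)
The plan is to handle both cases by a common engine: show that every pair of consecutive non-empty $B_j$'s is anti-complete, invoke (P7) to force $|B_j|\le 1$, and then combine this with (P2) and (P6) to obtain the independence of $\bigcup_j B_j$; the $S_2$ conclusion then follows from Lemma~\ref{p2-p3 2}(1)--(2) together with (P10)--(P11).

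Case~(1) is immediate: since only $B_{i-1}$ and $B_i$ are non-empty and $B_{i-1}\nsim B_i$, (P7) yields $|B_{i-1}|,|B_i|\le 1$, and $\bigcup_j B_j=\{b_{i-1},b_i\}$ is independent. If $S_2\neq\emptyset$, then (P10) forces any $z\in S_2$ to have a neighbor in $\{b_{i-1},b_i\}$, so Lemma~\ref{p2-p3 2}(2)---whose first alternative is ruled out by the existence of such a neighbor---supplies a unique $z\in S_2$ with $N(b_{i-1})\cap S_2=N(b_i)\cap S_2=\{z\}$; (P11) then gives $|S_2|=1$ and $S_2\sim\bigcup_j B_j$.

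For case~(2) with $B_{i-1},B_i,B_{i+1}$ non-empty and (WLOG, by the reflection symmetry of $C_5$) $B_{i-1}\nsim B_i$, the key step is to show $B_i\nsim B_{i+1}$. If instead $b_ib_{i+1}^*$ were an edge for some $b_{i+1}^*\in B_{i+1}$, then
\[
H\bigl[\{b_{i-1},v_{i-2},\,b_{i+1}^*,b_i,v_{i+1}\}\bigr]\cong P_2\cup P_3,
\]
with $P_2=b_{i-1}v_{i-2}$ and $P_3=b_{i+1}^*b_iv_{i+1}$; the verification of every missing edge uses only the definitions of the $B_j$'s, (P6), the case hypothesis $B_{i-1}\nsim B_i$, and the fact that $v_{i-2}$ and $v_{i+1}$ are at cyclic distance $2$ in $C_5$. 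A one-step shift of this template gives $B_{i+1}\nsim B_{i+2}$ (when $B_{i+2}\neq\emptyset$) via $\{b_i,v_{i-1},b_{i+2}^*,b_{i+1},v_{i+2}\}$; its mirror gives $B_{i-2}\nsim B_{i-1}$ (when $B_{i-2}\neq\emptyset$) via $\{b_i,v_{i+1},b_{i-2}^*,b_{i-1},v_{i-2}\}$; and when both $B_{i-2}$ and $B_{i+2}$ are non-empty, a final configuration $\{v_i,b_{i-1},v_{i+2},b_{i-2}^*,b_{i+2}^*\}$ forbids an edge between them. Together with (P7), (P2), and (P6), these anti-complete relations deliver $|B_j|\le 1$ for every $j$ and independence of $\bigcup_j B_j$. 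The $S_2$ conclusion then proceeds exactly as in case~(1): any $z\in S_2$ has a neighbor in some $b_j\in\bigcup_j B_j$, and bootstrapping via Lemma~\ref{p2-p3 2}(2) for cyclic-distance-$1$ pairs and Lemma~\ref{p2-p3 2}(1) for cyclic-distance-$2$ pairs forces $z$ to be complete to $\bigcup_j B_j$, while (P11) caps $|S_2|\le 1$.

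The main technical hurdle is assembling the four $P_2\cup P_3$ configurations above. Each requires a careful edge-by-edge verification, and what makes them work is the systematic choice of the two $C_5$ vertices at cyclic distance $2$ (hence non-adjacent), together with two $B$-vertices whose potential cross edge is already forbidden by (P6) or by a previously derived anti-complete relation; getting the indices right in the mirror and shifted templates, and making sure each derived relation is in place before it is invoked, is the most error-prone part.
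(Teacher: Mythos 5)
Your proof is correct and follows essentially the same route as the paper's: the same induced $P_2\cup P_3$ configuration $\{b_{i-1},v_{i-2},v_{i+1},b_i,b_{i+1}\}$ is used to propagate anti-completeness of consecutive $B_j$'s around the cycle, and then (P7), (P2), (P6), Lemma~\ref{p2-p3 2} and (P10)--(P11) give the singleton, independence and $S_2$ conclusions exactly as in the paper. If anything, your explicit extra configuration $\{v_i,b_{i-1},v_{i+2},b_{i-2}^*,b_{i+2}^*\}$ for the pair $(B_{i-2},B_{i+2})$ when $h_B=5$ spells out a step the paper handles only implicitly by one more iteration of its claim.
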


\begin{proof}
(1) By (P7), $|B_{i-1}|=|B_{i}|=1$ and $B_{i-1}\cup B_{i}$ is an independent set. Furthermore, $|S_2|\leq1$ and $S_2 \sim B_{i-1}\cup B_{i}$ by (P10)-(P11) and Lemma \ref{p2-p3 2} $(2)$.

(2) Firstly, we show that if $B_{i-1}, B_i, B_{i+1} \neq \emptyset$, and $B_{i-1} \nsim B_{i}$ ($B_{i} \nsim B_{i+1}$) for some $i\in [5]$, then $B_{i} \nsim B_{i+1}$ ($B_{i-1} \nsim B_{i}$). To the contrary, assume that there exist $b_i \in B_i$ and $b_{i+1} \in B_{i+1}$ such that $b_i \sim b_{i+1}$. Then, for any $b_{i-1} \in B_{i-1}$, we have that $b_{i-1} \nsim b_i$ by $B_{i-1} \nsim  B_{i}$, and $b_{i-1} \nsim b_{i+1}$ by (P6). But $H[\{b_{i-1}, v_{i -2}, v_{i+1}, b_i, b_{i+1}\}] \cong P_{2} \cup P_{3}$, a contradiction. Thus, if $B_{i-1} \nsim B_{i}$, then $B_{i} \nsim B_{i+1}$. Analogously, if $B_{i} \nsim B_{i+1}$, then $B_{i-1} \nsim B_{i}$. By (P7), $|B_{i-1}| = |B_i|= |B_{i +1 }| = 1$. Furthermore, $B_{i-1} \cup B_i \cup B_{i +1 }$ is an independent set by (P6).

If $h_B \geq 4$, applying the above  discussion on $B_{i}, B_{i+1}, B_{i+2}$ or $B_{i-2}, B_{i-1}, B_{i}$, then $|B_j|\leq 1$ for each $j\in[5]$ and $\bigcup_{j =1}^{5} B_j$ is an independent set by (P6). By (P10)-(P11) and Lemma \ref{p2-p3 2} $(1)$-$(2)$, $|S_2|\leq1$ and $S_2 \sim \bigcup_{j =1}^{5} B_j$.
\end{proof}

The following two lemmas describe some relations between $\bigcup_{j =1}^{5} B_j$ and $ (\bigcup_{j =1}^{5} A_j) \cup S_2$.

\begin{lem}\label{p2-p3 4}

Suppose that $B_{i}, B_{i +2} \neq \emptyset$ for some $i\in [5]$. If $A_{i}\cup A_{i +2} \neq \emptyset$ or $A_{i-2}, A_{i-1} \neq \emptyset$, then $|B_{i}|=|B_{i + 2}|=1$.
\end{lem}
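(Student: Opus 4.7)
\medskip\noindent\emph{Proof sketch.} The plan is to fix $i=1$ using the cyclic symmetry of $C_5$ and argue by contradiction: assuming $|B_1|\geq 2$ or $|B_3|\geq 2$, I will exhibit five vertices of $H$ inducing $P_2\cup P_3$, contradicting that $H$ is $P_2\cup P_3$-free. The hypothesis then splits into three cases: (i) $A_1\ne\emptyset$; (ii) $A_3\ne\emptyset$; (iii) $A_4,A_5\ne\emptyset$. The reflection of $C_5$ swapping $v_1\leftrightarrow v_3$ and $v_4\leftrightarrow v_5$ (fixing $v_2$) is an automorphism that exchanges $B_1\leftrightarrow B_3$, $A_1\leftrightarrow A_3$, and $A_4\leftrightarrow A_5$, so it carries case (ii) to case (i) and preserves case (iii); hence it suffices to handle cases (i) and (iii), each time verifying both $|B_1|\leq 1$ and $|B_3|\leq 1$.

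In case (i), pick $a_1\in A_1$ and $b_3\in B_3$. If $b_1,b_1'\in B_1$ are distinct, I claim that $H[\{b_1,v_5,b_1',a_1,b_3\}]\cong P_3\cup P_2$: the only edges are $b_1v_5$ and $v_5b_1'$ (from the definition of $B_1$) together with $a_1b_3$ (from (P4), since $A_1\sim B_3$). The remaining pairs are non-edges: $b_1b_1'$ by (P2); $\{b_1,b_1'\}\nsim a_1$ by (P4); $\{b_1,b_1'\}\nsim b_3$ by (P6); $v_5\nsim a_1$ because $N(a_1)\cap S_0=\{v_1\}$; and $v_5\nsim b_3$ because $N(b_3)\cap S_0=\{v_2,v_4\}$. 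This gives the required $P_3\cup P_2$, a contradiction. Symmetrically, when $b_3,b_3'\in B_3$ are distinct, the set $\{b_3,a_1,b_3',b_1,v_5\}$ (for any fixed $b_1\in B_1$) has only the edges $b_3a_1$, $a_1b_3'$, $b_1v_5$ and again induces $P_3\cup P_2$.

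In case (iii), fix $a_4\in A_4$, $a_5\in A_5$, $b_1\in B_1$, $b_3\in B_3$. Properties (P3)--(P5) yield the adjacencies $a_4\sim b_1$ and $a_5\sim b_3$, together with the non-adjacencies $a_4\nsim b_3$, $a_5\nsim b_1$, $a_4\nsim a_5$. If $b_1,b_1'\in B_1$ are distinct, then $H[\{b_1,a_4,b_1',a_5,b_3\}]$ has exactly the three edges $b_1a_4$, $a_4b_1'$, $a_5b_3$ (the remaining pairs being forbidden by the above together with (P2) and (P6)), hence is isomorphic to $P_3\cup P_2$; if $b_3,b_3'\in B_3$ are distinct, the set $\{b_3,a_5,b_3',a_4,b_1\}$ works by the analogous check.

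The main obstacle is purely bookkeeping: for each proposed five-vertex set one must verify that it has \emph{exactly} three edges forming $P_3\cup P_2$, ruling out every other pair via the appropriate item among (P1)--(P11) and the definitions of $A_j,B_j,S_0$. The underlying idea is uniform---in every case the $P_3$ is built around a ``hub'' vertex ($v_5$ or $a_1$ in case (i), and $a_4$ or $a_5$ in case (iii)) that is adjacent to two vertices of a single $B_j$, while an unused vertex pairs off with either a $C_5$-vertex or a vertex of the opposite $B$-set to form the disjoint $P_2$.
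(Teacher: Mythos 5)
Your proof is correct and is essentially the paper's own argument: the same case split (reducing $A_{i+2}\neq\emptyset$ to $A_i\neq\emptyset$ by the reflection symmetry, then treating $A_{i-2},A_{i-1}\neq\emptyset$ separately) and, up to relabeling, exactly the same five-vertex sets inducing $P_2\cup P_3$, verified with the same items (P2)--(P6) and the definitions of $A_j,B_j$.
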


\begin{proof}
For simplicity, for $s \in \{i, i+2\}$, if $|B_{s}|\geq 2$, let $b_s, b_s'\in B_{s}$ and $b_s \neq b_s'$.

We firstly show that if $A_{i}\cup A_{i +2} \neq \emptyset$, then $|B_{i}|=|B_{i + 2}|=1$. W.l.o.g., assume $A_{i} \neq \emptyset$. If $|B_{i}|\geq 2$, then $\{a_{i}\} \nsim \{b_i, b_i'\}$ and $a_{i} \sim b_{i+2}$ by (P4). Moreover, by (P2) and (P6), $\{b_i, b_i', b_{i+2}\}$ is an independent set. But then $H[\{a_{i}, b_{i+2}, b_i, v_{i-1}, b_i'\}] \cong P_{2} \cup P_{3}$, a contradiction. Thus $|B_{i}|=1$. If $|B_{i+2}|\geq 2$, then $\{a_{i}\} \sim \{b_{i+2}, b_{i+2}'\}$ and $a_{i} \nsim b_{i}$ by (P4). By (P2) and (P6), $\{b_i, b_{i+2}, b_{i+2}'\}$ is an independent set. But $H[\{b_{i}, v_{i-1}, b_{i+2}, a_{i}, b_{i+2}'\}] \cong P_{2} \cup P_{3}$, a contradiction. Thus $|B_{i+2}|=1$.

We next show that if $A_{i-2}, A_{i-1} \neq \emptyset$, then $|B_{i}|=|B_{i + 2}|=1$. Suppose $|B_{i}|\geq 2$. Then $\{a_{i-2}\} \sim \{b_i, b_i'\}$ and $a_{i-1} \sim b_{i+2}$ by (P4). Moreover, $\{a_{i-1}\} \nsim \{b_i, b_i' \}$ and $a_{i-2} \nsim b_{i+2}$ by (P5), and $a_{i-1} \nsim a_{i-2}$ by (P3). Note that $\{b_i, b_i', b_{i+2}\}$ is an independent set. Then $H[\{a_{i-1}, b_{i+2}, b_i, a_{i-2}, b_i'\}] \cong P_{2} \cup P_{3}$, a contradiction. Thus $|B_{i}|=1$. Analogously, we have $|B_{i+2}|=1$.
\end{proof}

\begin{lem}\label{p2-p3 5}
If one of the following holds:

\begin{itemize}
  \item[\emph {(1)}] $h_B=2$, $B_{i}, B_{i+2} \neq\emptyset$ and $h_A \geq 3$, for some $i \in [5]$,
  \item[\emph {(2)}] $h_B=3$, $B_{i-2}, B_{i}, B_{i+2} \neq\emptyset$ and $A_{i-2}\cup A_{i}\cup  A_{i+2} \neq \emptyset$, for some $i \in [5]$,
  \item[\emph {(3)}] $h_B \geq 4$ and $ h_A \geq 1$,
\end{itemize}
then $|B_j| \leq 1$ for every $j \in [5]$, $\bigcup_{j =1}^{5} B_j$ is an independent set, $|S_2| \leq 1$ and $S_2 \sim \bigcup_{j =1}^{5} B_j$.
\end{lem}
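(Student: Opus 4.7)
The plan is to treat the three cases separately, combining Lemmas \ref{p2-p3 2}--\ref{p2-p3 4}, properties (P1)--(P11), and a small number of direct $P_2 \cup P_3$ extractions. Case (1) I dispose of first: when $S_2 \neq \emptyset$, every assertion follows immediately from Lemma \ref{p2-p3 2}(3); when $S_2 = \emptyset$, only the cardinality bound $|B_i|, |B_{i+2}| \leq 1$ is needed, and this will follow from Lemma \ref{p2-p3 4} because $h_A \geq 3$ forces one of its two hypotheses to hold (either some $A_i$ or $A_{i+2}$ is nonempty, or else the three nonempty $A$'s must be exactly $A_{i-2}, A_{i-1}, A_{i+1}$, so in particular $A_{i-2}$ and $A_{i-1}$ are both nonempty).

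For case (2), I take $i=3$ so that $B_1, B_3, B_5$ are the nonempty sets. If $A_3 \neq \emptyset$, I would apply Lemma \ref{p2-p3 4} to both $(B_1, B_3)$ and $(B_3, B_5)$ to obtain $|B_1|=|B_3|=|B_5|=1$, invoke (P8) with $i=3$ to get $B_1 \nsim B_5$, and use Lemma \ref{p2-p3 2}(1) on the same two pairs to control $S_2$. If $A_1 \neq \emptyset$ (with $A_5$ symmetric), Lemma \ref{p2-p3 4} still handles $(B_1, B_3)$; for the remaining assertions on $B_5$ I plan to examine the five-vertex set $\{a_1, b_3, b_5, b_1, v_5\}$, on which (P4)--(P6) and the definitions of the $B_j$'s kill every possible edge except $\{a_1, b_3\}$, $\{b_1, v_5\}$, and a putative $\{b_1, b_5\}$. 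Adding that last edge yields an induced $P_2 \cup P_3$, so $B_1 \nsim B_5$; combining this with (P7), which forces some $b_1 \sim b_5$ as soon as $|B_5| \geq 2$, gives $|B_5| = 1$.

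For case (3) I split on $h_B$. When $h_B = 5$, (P8) applied to any nonempty $A_k$ yields $B_{k-2} \nsim B_{k+2}$ between two consecutive-indexed $B$'s, so Lemma \ref{p2-p3 3}(2) applied to the appropriate triple of consecutive $B_j$'s will give the full conclusion. When $h_B = 4$, I take WLOG $B_5 = \emptyset$; if the nonempty $A_k$ has $k \in \{1, 4, 5\}$, the (P8) deduction is nontrivial and delivers a non-edge among $B_1, B_2, B_3, B_4$ that Lemma \ref{p2-p3 3}(2) can consume. The residual sub-cases $k = 2$ and $k = 3$ will need a direct extraction: for $A_2 \neq \emptyset$, any edge between some $b_1 \in B_1$ and $b_2 \in B_2$ would make $\{a_2, b_4, v_1, b_2, b_1\}$ induce $P_2 \cup P_3$ (its only edges being $\{a_2, b_4\}$, $\{b_1, b_2\}$, $\{b_2, v_1\}$, with every other pair ruled out by (P4)--(P6)), so $B_1 \nsim B_2$ and Lemma \ref{p2-p3 3}(2) applies with $i = 2$; the case $k = 3$ is handled symmetrically via $\{a_3, b_1, v_4, b_3, b_4\}$.

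The main obstacle I expect is precisely these middle-$A$ sub-cases (the $A_1/A_5$ branches of case (2) and the $A_2/A_3$ branches of case (3) with $h_B=4$), where neither Lemma \ref{p2-p3 4} nor (P8) closes the argument; the delicate step is to choose a five-vertex set in which every edge forbidden by (P4)--(P6) vanishes while the hypothetical extra edge completes a $P_2 \cup P_3$. Once those direct extractions are in place, the $S_2$-part of the conclusion reduces to Lemma \ref{p2-p3 2}(1) together with (P10)--(P11), and the independence and cardinality statements for $\bigcup_j B_j$ come from Lemma \ref{p2-p3 3}(2) in case (3), or from (P6) and (P8) in cases (1)--(2).
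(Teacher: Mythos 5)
Your proposal is correct and follows essentially the same route as the paper: cardinalities via Lemma \ref{p2-p3 4}, the non-adjacency $B_{i-2}\nsim B_{i+2}$ via (P8) when the relevant $A$-set is nonempty, and the two ``middle-$A$'' subcases handled by direct $P_2\cup P_3$ extractions that are in fact the very same five-vertex sets the paper uses ($\{a_{i-2},b_i,v_{i+2},b_{i-2},b_{i+2}\}$ and $\{a_i,b_{i+2},b_{i-1},b_i,v_{i-1}\}$), before finishing with Lemmas \ref{p2-p3 2} and \ref{p2-p3 3}. The only cosmetic differences are your organization of case (3) by $h_B\in\{4,5\}$ and your use of chained applications of Lemma \ref{p2-p3 2}(1) (plus (P10)) for the $S_2$ statement where the paper cites Lemma \ref{p2-p3 2}(3); both are sound.
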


\begin{proof}

(1) By $h_A\geq 3$, there is either $A_{i}\cup A_{i +2} \neq \emptyset$ or $A_{i-2}, A_{i-1}, A_{i+1}\neq \emptyset$. Then $|B_{i}|=|B_{i + 2}|=1$ and $B_{i} \cup B_{i + 2}$ is an independent set by Lemma \ref{p2-p3 4} and (P6). By Lemma \ref{p2-p3 2} $(3)$, we have $|S_2| \leq 1$ and $S_2 \sim \bigcup_{j =1}^{5} B_j$.

(2) If $S_2 \neq \emptyset$, then we have done by Lemma \ref{p2-p3 2} $(3)$. Next we assume $S_2 = \emptyset$. If $A_{i}\neq \emptyset$, then $|B_{i-2}|=|B_i|= |B_{i+2}| =1$ by Lemma \ref{p2-p3 4}, and $B_{i-2} \nsim B_{i+2}$ by (P8). Furthermore, $B_{i-2} \cup B_i \cup B_{i+2}$ is an independent set by (P6). If $A_{i-2}\neq \emptyset$, then $|B_i|=|B_{i-2}|= 1$ by Lemma \ref{p2-p3 4}. We claim that $B_{i-2} \nsim B_{i+2}$. (Otherwise, there exist vertices $b_{i-2} \in B_{i-2}$ and $b_{i+2} \in B_{i+2}$ such that $b_{i-2} \sim b_{i+2}$. By (P4), $a_{i-2} \nsim b_{i-2}$ and $a_{i-2} \sim b_i$. By (P5), $a_{i-2} \nsim b_{i+2}$. Moreover, by (P6), $\{b_i\} \nsim \{b_{i-2},  b_{i+2}\}$. But then $H[\{a_{i-2}, b_i, v_{i+2}, b_{i-2}, b_{i+2}\}] \cong P_{2} \cup P_{3}$, a contradiction.) Then $|B_{i-2}|=|B_{i+2}| = 1$ by (P7). Thus, $|B_i| =|B_{i-2}|=|B_{i+2}| = 1$ and $B_{i-2} \cup B_i \cup B_{i+2}$ is an independent set by (P6). Analogously, if $A_{i+2}\neq \emptyset$, we also have that $|B_i| =|B_{i-2}|=|B_{i+2}| = 1$ and $B_{i-2} \cup B_i \cup B_{i+2}$ is an independent set.

(3) By  $ h_A \geq 1$, let $A_i \neq \emptyset$ for some $i\in [5]$. By $h_B \geq 4$, $B_{i-2} \cup B_{i+2} \neq\emptyset$. If $B_{i-2}, B_{i+2} \neq \emptyset$, then $B_{i-2}\nsim B_{i+2}$ by $A_{i} \neq \emptyset$ and (P8). We next show that if $B_{i-2} = \emptyset$, then $B_{i-1}\nsim B_{i}$. Otherwise, there exist $b_{i-1} \in B_{i-1}$  and $b_i \in B_i$ such that $b_{i-1} \sim b_{i}$. By (P4), $a_i \sim b_{i+2}$ and $a_i \nsim b_i$; by (P5), $a_i \nsim  b_{i-1}$. Moreover, by (P6), $b_{i+2} \nsim \{b_i, b_{i-1}\}$. But then $H[\{a_i, b_{i+2}, b_{i-1}, b_{i}, v_{i-1}\}] \cong P_{2} \cup P_{3}$, a contradiction. Analogously, if $B_{i+2} = \emptyset$, then $B_{i}\nsim B_{i+1}$. In any of the three cases, by Lemma \ref{p2-p3 3} (2), we have done.
\end{proof}

\section{Proof of Theorem \ref{thm-p2p31}}

Let $G$ be a $\{3K_1, \overline{P_2 \cup P_3}\}$-free graph on $n$ vertices. Next we focus on the complement $H = \overline{G}$ of $G$, and show that $\alpha(H) \geq (n-1)/3$. If $H$ is perfect, then $G$ is perfect by Theorem \ref{thm-p2}, we have done. So we may assume that $H$ is imperfect.

Let $\mathcal{H}$ be a family of imperfect, connected and $\{K_3, P_2 \cup P_3\}$-free graphs. First, we discuss the case that $H \in \mathcal{H}$. Then $C_5 \sqsubseteq H$ by Theorem \ref{thm-p1}. Let $v_i$, $A_i$, $B_i$, $S_2$, $h_A$ and $h_B$ be defined as in Section 3, $i \in [5]$. By (P9), $V(H) = \bigcup_{i =1}^{5} (A_i \cup B_i \cup \{v_i\}) \cup S_2$. Next we discuss four cases according to the value of $h_B$.

If $h_B \leq 2$ and $B_{i-2} \cup B_{i-1} \cup B_{i+1} = \emptyset$ for some $i \in [5]$, then $X_{2,1}= B_{i} \cup B_{i+2} \cup \{v_{i}, v_{i + 2}\} \cup A_{i+1}$, $Y_{2,1}= A_{i-2} \cup A_{i+2} \cup S_2 \cup \{v_{i-1}, v_{i + 1}\}$ and $Z_{2,1}= A_{i-1} \cup A_{i} \cup \{ v_{i - 2}\}$ are three vertex-disjoint independent sets by (P1)-(P3), (P6) and (P9)-(P10). If $h_B =2$ and $B_{i}, B_{i+1}\neq\emptyset$ for some $i \in [5]$, then we consider two subcases. If $B_i \nsim B_{i+1}$, then $ n\leq 13$ by Lemma \ref{p2-p3 3}. Note that $X_{2,2}= B_{i} \cup B_{i+1} \cup \{ v_{i + 2}\} \cup  A_{i} \cup A_{i+1}$, $Y_{2,2}= A_{i-2} \cup A_{i+2} \cup S_2 \cup \{v_{i-1}, v_{i + 1}\}$ and $Z_{2,2}= A_{i-1} \cup \{v_{i},  v_{i + 2}\}$ are three vertex-disjoint independent sets by (P1)-(P5) and (P9)-(P10). If $[B_i, B_{i+1}] \neq\emptyset$, then $A_{i-2}=\emptyset$ by (P8). By (P1)-(P3), (P5) and (P9)-(P10), we see that $X_{2,3}= B_{i} \cup A_{i-1} \cup \{v_{i}, v_{i + 2}\}$, $Y_{2,3}= B_{i+1} \cup A_{i+2} \cup \{v_{i-2}, v_{i + 1}\}$ and $Z_{2,3}= A_{i} \cup A_{i+1} \cup S_2 \cup \{v_{i-1}\}$ are three vertex-disjoint independent sets. In any of the three cases, we have $|X_{2,j} \cup Y_{2,j} \cup Z_{2,j}| =n$ for $j \in [3]$. Thus $\alpha(H) \geq  n/3$ when $h_B \leq 2$.

If $h_B =3$ and $B_{i}, B_{i+1}, B_{i+2} \neq \emptyset$ for some $i \in [5]$, then $X_{3,1}= B_{i} \cup B_{i+2} \cup \{v_{i}, v_{i + 2}\} \cup A_{i+1}$, $Y_{3,1}= B_{i+1} \cup A_{i} \cup  \{ v_{i - 2}\}$ and $Z_{3,1}= A_{i-2} \cup A_{i+2} \cup S_2 \cup \{v_{i-1}, v_{i + 1}\}$ are three vertex-disjoint independent sets by (P1)-(P3), (P5)-(P6) and (P9)-(P10). We consider two subcases. When $[B_{i+1}, B_{i+2}] \neq\emptyset$, there is $A_{i-1}=\emptyset$ by (P8), which means that $|X_{3,1} \cup Y_{3,1} \cup Z_{3,1}| =n$ and then $\alpha(H) \geq  n/3$. When $B_{i+1}\nsim B_{i+2}$, we know that $ n \leq 14$ and $|S_2| \leq 1$ by Lemma \ref{p2-p3 3}. Furthermore, if $n \leq 12$, then $\alpha(H) \geq |X_{3,1}| \geq 4\geq n/3$; if $13 \leq n \leq 14$, then $h_A + |S_2|\geq 5$, which implies that $\alpha(H) \geq \max \{|X_{3,1}|,|Z_{3,1}|\} \geq 5> n/3$. If $h_B =3$ and $B_{i-2}, B_{i}, B_{i+2}\neq\emptyset$ for some $i \in [5]$, then we consider two subcases. When $A_{i-2} \cup A_{i} \cup A_{i+2}\neq\emptyset$, we have $ n \leq 14$ and $|B_{i-2}|=| B_{i+2}| =1$ by Lemma \ref{p2-p3 5}. Furthermore, by (P1)-(P5), $X_{3,2}= B_{i-2} \cup B_{i+2} \cup \{v_{i}\} \cup A_{i-2} \cup A_{i + 2}$ is an independent set of size five. Thus $\alpha(H) \geq |X_{3,2}| > n/3$. When $A_{i-2} \cup A_{i} \cup A_{i+2}= \emptyset$, we can see that $X_{3,3}= B_{i} \cup B_{i+2} \cup \{v_{i}, v_{i + 2}\} \cup A_{i+1}$, $Y_{3,3}= B_{i-2} \cup A_{i-1} \cup \{v_{i-2}, v_{i + 1}\}$ and $Z_{3,3}= S_{2} \cup \{ v_{i - 1}\}$ are three vertex-disjoint independent sets by (P1)-(P2), (P5)-(P6) and (P9). Noting that $|X_{3,3} \cup Y_{3,3} \cup Z_{3,3}| =n$, $\alpha(H) \geq  n/3$.

If $h_B =4$, say $B_{i}= \emptyset$ for some $i \in [5]$, then $X_{4,1}= B_{i-2} \cup B_{i+1} \cup \{v_{i-2}, v_{i + 1}\} \cup A_{i+2}$, $Y_{4,1}= B_{i-1} \cup B_{i+2} \cup \{v_{i-1}, v_{i + 2}\} \cup A_{i-2}$ and $Z_{4,1}= S_2 \cup \{v_{i}\}$ are three vertex-disjoint independent sets by (P1)-(P2), (P5)-(P6) and (P9). If $h_A =0$, then $|X_{4,1} \cup Y_{4,1} \cup Z_{4,1}| =n$, which means that $\alpha(H) \geq  n/3$. If $h_A \geq 1$, then $ n \leq 15$ and $|S_2| \leq 1$ by Lemma \ref{p2-p3 5}. Clearly, $\alpha(H) \geq |X_{4,1}| \geq 4\geq n/3$ when $n \leq 12$. If $ 13\leq n \leq 15$, there is $h_A + |S_2|\geq 4$. Moreover, when $S_2 = \emptyset$, $\alpha(H) \geq \max\{|X_{4,1}|, |Y_{4,1}|\} \geq 5\geq n/3$; when $S_2 \neq \emptyset$, $h_A \geq 3$, which means that there exists an $i \in [5]$ such that $\alpha(H) \geq |A_{i} \cup A_{i+1} \cup S_2 \cup \{v_{i-1}, v_{i + 2}\}|=5 \geq  n/3$.

If $h_B =5$, then we consider the following two subcases. If $S_2=\emptyset$ and $h_A=0$, then $X_{5,1}= B_{i-2} \cup B_{i+1} \cup \{v_{i-2}, v_{i + 1}\}$, $Y_{5,1}= B_{i-1} \cup B_{i+2} \cup \{v_{i-1}, v_{i + 2}\}$ and $Z_{5,1}= B_i \cup \{v_{i}\}$ are three vertex-disjoint independent sets by (P2) and (P6), which means that $\alpha(H) \geq n/3$. If $S_2\neq\emptyset$, or $S_2=\emptyset$ and $h_A\geq 1$, then $n \leq 16$ and $\bigcup_{i=1}^{5} B_i$ is an independent set of size five by Lemma \ref{p2-p3 2} $(3)$ and Lemma \ref{p2-p3 5}. Clearly, when $n \leq 15$, there is $\alpha(H) \geq 5 \geq n/3$; when $n = 16$, $H \cong H^{*}$ (see Figure 2) and $\alpha(H)\geq (n-1)/3$.

If $H$ is an imperfect disconnected graph, then $H$ is a union of a graph $H' \in \mathcal{H}$ and $n-|H'|$ isolated vertices (see Remark). It means that $|H'| \leq n -1$ and then $\omega(G)= \alpha(H) \geq\alpha(H')  +  n- |H'| \geq (|H'|-1)/3 + n-|H'| > n/3$.

Totally, $\omega(G) \geq  n/3$ when $G$ is $\{3K_1, \overline{P_2 \cup P_3}\}$-free except the case that when $G \cong \overline{H^*}$, $\omega(G) \geq (n-1)/3$. By the above discussion, we only need to show that $\alpha(H^{*})\leq (n-1)/3 = 5$. Note that $H^{*}[\bigcup_{i=1}^{5} \{v_i\}] \cong C_5$ and $H^{*}[\bigcup_{i=1}^{5} \{a_i\}] \cong C_5$. Suppose that $Q$ is a maximum independent set in $H^{*}$. If $ z \in Q$, then $Q \cap \bigcup_{i=1}^{5} \{b_i\} = \emptyset$, which means that $|Q|  \leq 5$. If $z \notin Q$, then $|Q| \leq 5$ when $|\bigcup_{i=1}^{5} \{b_i\} \cap Q| \leq 1$. The remaining cases to be considered are that $z \notin Q$ and $|\bigcup_{i=1}^{5} \{b_i\} \cap Q| \geq 2$.
By (P4) and the definition of $B_i$ $(i \in [5])$, $Q \cap \bigcup_{i=1}^{5} \{v_i\}=Q \cap \bigcup_{i=1}^{5} \{a_i\}= \emptyset$ when $|\bigcup_{i=1}^{5} \{b_i\} \cap Q| \geq4$; $Q \cap \bigcup_{i=1}^{5} \{v_i\} = \emptyset$ and $|Q \cap \bigcup_{i=1}^{5} \{a_i\}| \leq 1$ when $ |\bigcup_{i=1}^{5} \{b_i\} \cap Q| = 3$ and the three $b_i$'s in $Q$ are consecutive; $|Q \cap \bigcup_{i=1}^{5} \{v_i\}| \leq 1$  and $Q \cap \bigcup_{i=1}^{5} \{a_i\} = \emptyset$ when $ |\bigcup_{i=1}^{5} \{b_i\} \cap Q| = 3$ and the three $b_i$'s in $Q$ are non-consecutive; $|Q \cap \bigcup_{i=1}^{5} \{v_i\}| \leq 1$ and $|Q \cap \bigcup_{i=1}^{5} \{a_i\}| \leq 2$ when $ |\bigcup_{i=1}^{5} \{b_i\} \cap Q| = 2$ and the two $b_i$'s in $Q$ are consecutive; $|Q \cap \bigcup_{i=1}^{5} \{v_i\}| \leq 2$ and  $|Q \cap \bigcup_{i=1}^{5} \{a_i\}| \leq 1$ when $ |\bigcup_{i=1}^{5} \{b_i\} \cap Q| = 2$ and the two $b_i$'s in $Q$ are non-consecutive. In any cases, there is $\alpha(H^{*})=|Q|  \leq5$.

\section{Proof of Theorem \ref{thm-paraglider-boundedness}}

Before proceeding with the proof, we give one more definition. The clique covering number of graph $F$, denoted by $\theta(F)$, is the minimum number of disjoint cliques required to cover vertex set $V(F)$. Clearly, $\theta(F) = \chi(\overline{F})$.

Next, for any $\{3K_1, \overline{P_2 \cup P_3}\}$-free graph $G$, we focus on the complement $H = \overline{G}$ of $G$, and show that $\theta(H) \leq \max\{\alpha(H)+3, 2\alpha(H)-2\}$. If $H$ is perfect, then $G$ is perfect by Theorem \ref{thm-p2}, we have done. So we may assume that $H$ is imperfect.

Let $\mathcal{H}$ be a family of imperfect, connected and $\{K_3, P_2 \cup P_3\}$-free graphs. First, we discuss the case that $H \in \mathcal{H}$. Then $C_5 \sqsubseteq H$ by Theorem \ref{thm-p1}. Let $v_i$, $A_i$, $B_i$, $S_2$, $h_A$ and $h_B$ be defined as in Section 3, $i \in [5]$. By (P9), $V(H) = \bigcup_{i =1}^{5} (A_i \cup B_i \cup \{v_i\}) \cup S_2$. Next we consider four cases.

\noindent {\bf Case 1.} $h_B \leq 2$ and $B_{i-2} \cup B_{i-1} \cup B_{i+1} = \emptyset$ for some $i \in [5]$.

By (P2), (P6) and (P9), $X = B_{i} \cup B_{i+2} \cup \{v_{i}, v_{i + 2}\}$ and $Y =S_2 \cup \{v_{i-1}, v_{i + 1}\}$ are two vertex-disjoint independent sets. By (P11), for any vertex $b \in B_i\cup B_{i+2}$, $|N(b) \cap S_2| \leq 1$; when $S_2 \neq \emptyset$, for any vertex $z \in S_2$, $|N(z) \cap B_i|= 1$ or $|N(z) \cap B_{i+2}| = 1$. Thus we may pick a matching $M$ of size $|S_2| +2$ to cover $Y $, $\{v_{i}, v_{i + 2}\}$ and $|S_2|$ vertices in $B_{i} \cup B_{i+2}$. For the set $X \cup Y$, the vertices in $X \setminus V(M)$ are not covered by $M$. Combining the independent set $X \setminus V(M)$, there are $|X|$ disjoint $K_1$'s and $K_2$'s to cover $X \cup Y$, i.e. $\theta_{1} =\theta(H[ X\cup Y ]) \leq |X| \leq \alpha(H)$. Moreover, by (P1) and (P3), $\theta_{2} =\theta(H[ \bigcup_{j=1}^{5} A_j \cup \{v_{i-2}\} ]) \leq 3$. Thus $\theta(H)\leq \theta_{1} + \theta_{2} \leq \alpha(H) + 3$.

\noindent {\bf Case 2.} $h_B =2$ and $B_{i}, B_{i+1}\neq\emptyset$ for some $i \in [5]$.

Let $L_i= N(S_2) \cap B_i$, $R_i =B_i \setminus L_i$, $L_{i+1}= N(S_2) \cap B_{i+1} $ and $R_{i+1} = B_{i+1} \setminus L_{i+1}$. Then $S_2 \cup R_i $ and $S_2 \cup R_{i+1}$  are two independent sets by (P2) and (P9). For vertices $x \in L_i$ and $y\in L_{i+1}$, when $x \sim y$, $x$ and $y$ have no common neighbors in $S_2$ because $H$ is $K_3$-free; when $x \nsim y$, there exists a vertex $z \in S_2$ such that $N(x) \cap S_2 = N(y) \cap S_2= \{z\} $ by Lemma \ref{p2-p3 2} (2). Let $M$ be the set of missing edges between $L_i$ and $L_{i+1}$. By (P7), $M$ is an empty set or a matching. Then
\begin{equation}\label{case2,M}
\begin{aligned}
0 \leq |M| \leq \min \{|L_i|, |L_{i+1}|\}.
\end{aligned}
\end{equation}
Moreover, by (P11), $[L_i, S_2]$ and $[L_{i+1}, S_2]$ are two matchings. Thus $|M| \leq |S_2|$ and
\begin{equation}\label{case2,S2-M}
\begin{aligned}
|S_2| =|L_i \cup L_{i+1}| -|M|.
\end{aligned}
\end{equation}
Let $A \in \{A_{i-1}, A_{i+1}\}$ and $A' \in \{A_{i}, A_{i+2}\}$.
By (P5) and (P10), we may find two independent sets
\begin{equation}\label{case2,xy}
\begin{aligned}
X = R_i \cup  S_2 \cup \{v_i, v_{i+2}\} \cup A, \ \ \  Y = R_{i+1} \cup  S_2 \cup \{v_{i+1}, v_{i-1}\} \cup A'.
\end{aligned}
\end{equation}
By \eqref{case2,M} and \eqref{case2,S2-M}, we have
\begin{equation}\label{hb=2}
\begin{aligned}
\alpha(H)&\geq \max \{|X|, |Y|\}\\
 &\geq \max \{|R_i \cup A|, |R_{i+1}  \cup A'|\}  + |S_2|+  2\\
&= \max \{|B_i \cup L_{i+1} \cup A| -|M|, |B_{i+1} \cup L_i \cup A'| -|M|\} +2\\
&\geq \max \{|B_i\cup A|, |B_{i+1}\cup A'|\} +2.
\end{aligned}
\end{equation}
Next we consider three subcases.

\noindent {\bf Case 2.1.}  $[B_i, B_{i+1}] \neq \emptyset$ and $S_2 \neq\emptyset$.

Then $A_{i-2} = \emptyset$ by (P8). Recalling that $[L_i, S_2]$ and $[L_{i+1}, S_2]$ are two matchings, $[L_i\cup (L_{i+1} \setminus V(M)), S_2 ]$ is a matching of size $|S_2|$. Thus
\begin{equation}\label{case2,m1}
\begin{aligned}
\theta(H[L_i \cup (L_{i+1} \setminus V(M))\cup S_2]) \leq  |S_2|.
\end{aligned}
\end{equation}
W.l.o.g., let $|R_i|\geq |R_{i+1}|$. Next we show that $\theta(H) \leq \max\{\alpha(H) + 3, 2\alpha(H)-2-|R_i|\}$.

When $|R_i|\geq |R_{i+1}| + |M|$, by (P7), we may pick a matching $M_1$ of size $|R_{i+1}| + |M|$ from $[R_i, R_{i+1} \cup (L_{i+1} \cap V(M))]$ to cover $|R_{i+1}| + |M| $ vertices in $R_i$, all vertices in $R_{i+1} \cup (L_{i+1} \cap V(M))$. Combining the independent set $R_i \setminus V(M_1)$ of size $|R_i|- (|R_{i+1}| + |M|)$ and two $K_2$'s, $v_{i-1}v_i$, $v_{i+1}v_{i+2}$, by \eqref{case2,xy} and \eqref{case2,m1}, $\theta_{1} =\theta(H[ B_i \cup B_{i+1} \cup S_2 \cup \bigcup_{j=1}^{5} \{v_j\} \setminus \{v_{i-2}\}]) \leq  |S_2| + |M_1| + |R_i \setminus V(M_1)| +2 =| S_2 |+ |R_i | +2 \leq |X| \leq \alpha(H)$. Noting that $\theta_{2} =\theta(H[ \{v_{i-2}\} \cup \bigcup_{j=1}^{5} A_j]) \leq 3$, we have $\theta(H)\leq \theta_{1} + \theta_{2} \leq \alpha(H) + 3$.

When $|R_{i+1}| \leq |R_i| \leq |R_{i+1}| + |M|-1$, this is exactly the case that $H \in \mathcal{B}_1$. Then we may pick a matching $M_2$ of size $|R_i|$ from $[R_i, R_{i+1} \cup (L_{i+1} \cap V(M))]$ to cover $R_i $ and $|R_i|$ vertices in $ R_{i+1}\cup (L_{i+1} \cap V(M))$. Noting that $|R_{i+1} \cup (L_{i+1} \cap V(M))| = |R_{i+1}| + |M|$, there is a vertex $b_{i+1} \in R_{i+1} \cup (L_{i+1} \cap V(M))$ uncovered by $M_2$. Thus, we may pick three disjoint $K_2$'s, $v_ib_{i+1}, v_{i-1}v_{i-2}, v_{i+1}v_{i+2},$ to cover $\{b_{i+1}\} \cup \bigcup_{j=1}^{5} \{v_j\}$. Combining the independent set $I= R_{i+1} \cup (L_{i+1} \cap V(M))\setminus (\{b_{i+1}\} \cup V(M_2))$ of size $|R_{i+1}|+ |M| - |R_i| -1$, by \eqref{case2,m1}, we have
 \begin{equation}\label{case2,3-2}
\begin{aligned}
 \theta_3 &=\theta(H[B_i \cup B_{i+1} \cup S_2 \cup \bigcup_{j=1}^{5} \{v_j\}]) \\
 &\leq |S_2| + |M_2| + 3 + |I|\\
 &= |S_2| + |R_i| + 3 + |R_{i+1}|+ |M| - |R_i| -1\\
 &=|S_2| + |R_{i+1}|+ |M| +2.
 \end{aligned}
\end{equation}
By $A_{i-2} = \emptyset$, \eqref{case2,S2-M} and \eqref{case2,3-2}, we have
\begin{equation}\label{hb=2,2}
\begin{aligned}
\theta(H)&\leq \theta_3+ \theta(H[\bigcup_{j=1}^{5} A_j \setminus A_{i-2} ]) \\
&\leq |S_2| + |R_{i+1}| + |M| +2 + \theta(H[\bigcup_{j=1}^{5} A_j \setminus A_{i-2}])\\
&= |B_{i} \cup B_{i+1}|-|R_i| + 2 + \theta(H[\bigcup_{j=1}^{5} A_j \setminus A_{i-2}]).
\end{aligned}
\end{equation}
Note that $A_{i-1}\sim A_{i+1}$ and $A_{i}\sim A_{i+2}$ by (P3). By \eqref{hb=2}, if $A_{i-1}\cup  A_{i+1}=\emptyset$ and $|A_{i}\cup  A_{i+2}|\geq 1$, or $A_{i}\cup  A_{i+2}=\emptyset$ and $|A_{i-1}\cup  A_{i+1}|\geq 1$, then $\theta(H[\bigcup_{j=1}^{5} A_j \setminus A_{i-2}]) =1$ and  $|B_{i} \cup B_{i+1}| \leq 2\alpha(H) - 5$; if $|A_{i}\cup  A_{i+2}|\geq 1$ and  $|A_{i-1}\cup  A_{i+1}|\geq 1$, then $\theta(H[\bigcup_{j=1}^{5} A_j \setminus A_{i-2}]) \leq2$ and $|B_{i} \cup B_{i+1}| \leq 2\alpha(H) - 6$; if $h_A =0$, then $\theta(H[\bigcup_{j=1}^{5} A_j \setminus A_{i-2}]) =0$ and $|B_{i} \cup B_{i+1}| \leq 2\alpha(H) - 4$. In any of cases, by \eqref{hb=2,2}, we have $\theta(H) \leq 2\alpha(H)-2 -|R_i|=2\alpha(H)-2 -\max\{|R_i|, |R_{i+1}|\}$.

\noindent {\bf Case 2.2.} $[B_{i}, B_{i+1}] \neq \emptyset$ and $S_2 =\emptyset$.

Then $T_1 =B_i \cup \{v_i, v_{i+2}\}$ and $T_2 =B_{i+1} \cup  \{v_{i+1}, v_{i-1}\}$ are two vertex-disjoint independent sets. W.l.o.g., let $|B_i| \geq |B_{i+1}|$. By (P7), we may pick a matching $M_3$ of size $ |B_{i+1}| +2$ to cover all vertices in $T_2$, $\{v_i, v_{i+2}\}$ and $ |B_{i+1}|$ vertices in $B_{i}$, and pick the independent set $B_{i} \setminus V(M_3)$. It means that there are $|B_{i}| +2$ disjoint $K_1$'s and $K_2$'s to cover $T_1 \cup T_2$. Then $\theta_{4} =\theta(H[ T_1 \cup T_2]) \leq |B_{i}| +2 \leq \alpha(H) $ by \eqref{hb=2}. By $\theta_{2} = \theta(H[\{v_{i-2}\} \cup \bigcup_{j=1}^{5} A_j]) \leq 3$, $\theta(H)\leq \theta_{2} + \theta_{4} \leq \alpha(H) +3 $.

\noindent {\bf Case 2.3.} $B_{i} \nsim B_{i+1}$.

Then, by (P7), $B_i = \{b_i\}$ and $B_{i+1} = \{b_{i+1}\}$. Clearly, $V(H) =S_2\cup \{b_i, b_{i+1}\} \cup \bigcup_{j=1}^{5} (\{v_j\} \cup A_j)$. If $S_2 =\emptyset$, then $\theta(H)\leq 6 \leq \alpha(H) +3$ by \eqref{hb=2}. If $S_2 \neq\emptyset$, then $S_2 = \{z\} \sim \{b_i, b_{i+1}\}$ by Lemma \ref{p2-p3 2} (2). Thus $\theta_{5} =\theta(H[\{z, b_i, b_{i+1}\} \cup \bigcup_{i=1}^{5} \{v_i\}]) \leq 4$ and $\theta_{6} =\theta(H[\bigcup_{j=1}^{5} A_j]) \leq 3$. Noting that $\{z, v_i, v_{i+2}, a_{i-2}\}$ is an independent set, $\alpha(H) \geq 3+ |A_{i-2}|$. Furthermore, by \eqref{hb=2}, we have that if $h_A \geq 1$, then $\alpha(H) \geq 4$; if $h_A =0$, then $\alpha(H) \geq 3$ and $\theta_{6}=0$. In either of the cases, $\theta(H)\leq\theta_{5}  + \theta_{6} \leq \alpha(H) +3$.

\bigskip
Thus, in Case 2, $\theta(H)  \leq  2\alpha(H)-2 -\max\{|R_i|, |R_{i+1}|\}\leq 2\alpha(H)-2$ when $H \in \mathcal{B}_1$; $\theta(H)  \leq  \alpha(H)+3$ when $H \notin \mathcal{B}_1$. We next give an example to show that the bound $2\alpha(H)-2$ is tight for graphs in $\mathcal{B}_1$.

\begin{example}\label{b1}
Let $\hat{H}_s\in  \mathcal{H}$ be a graph satisfying that $h_A=0, h_B=2$, $|B_{i}|=| B_{i+1}| = |S_2| =s \geq 2$ for some $i\in [5]$, there are $s$ missing edges (which form a perfect matching) between $B_i$ and $B_{i+1}$, and either of $[S_2, B_i]$, $[S_2, B_{i+1}]$ is a matching of size $s$. (A graph $H_2$ is shown in Figure 3.) Clearly, $\hat{H}_s \in \mathcal{B}_1$ noting that $B_i\setminus N(S_2)=B_{i+1}\setminus N(S_2)=\emptyset$. Since $\hat{H}_s$ is $K_3$-free and $|V(\hat{H}_s)| = 3s+5$, $\lceil(3s+5)/2 \rceil \leq \theta(\hat{H}_s)\leq 2\alpha(\hat{H}_s)-2$. When $s =2$, it is easy to see that $\alpha(\hat{H}_2) = 4$, then $\theta(\hat{H}_2)=6=2\alpha(\hat{H}_2)-2$.
\end{example}

\begin{figure}[ht]
\begin{center}
        \includegraphics[scale=0.2]{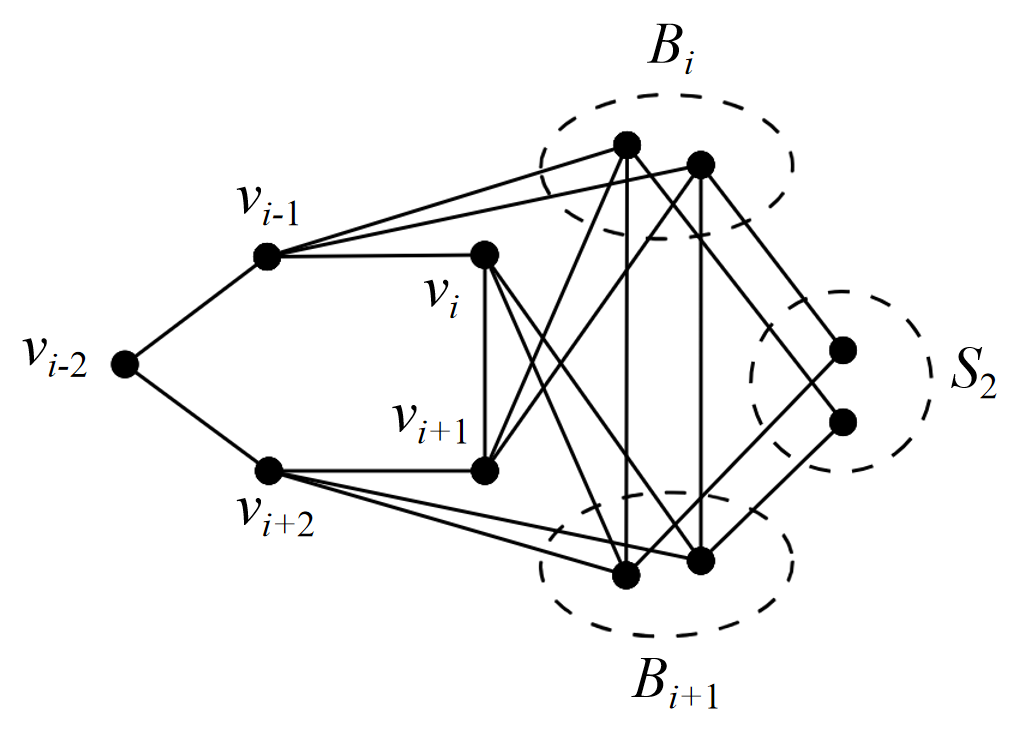}
\caption{Graph $\hat{H}_2.$}
\label{fig3}
\end{center}

\end{figure}

\noindent {\bf Case 3.} $h_B =3$ and $B_{i}, B_{i+1}, B_{i+2} \neq \emptyset$ for some $i \in [5]$.

If $[B_i \cup B_{i+2}, S_2] \neq \emptyset$, then $B_i = \{b_i\}$, $B_{i+2} = \{b_{i+2}\}$ and there exists vertex $z \in S_2$ such that $N(b_i) \cap S_2 = N(b_{i+2}) \cap S_2 = \{z\}$ by Lemma \ref{p2-p3 2} (1). When $A_{i-2} \cup A_{i-1}=\emptyset$, we next show that $\theta(H)\leq \alpha(H) + 3$. By (P10)-(P11), $|S_2 \setminus \{z\}| \leq |B_{i+1}|$ and $[S_2 \setminus \{z\}, B_{i+1}]$ is a matching when $S_2 \setminus \{z\} \neq \emptyset$. Note that
$$X = (S_2\setminus \{z\}) \cup \{v_{i}, v_{i + 2}\},\ \  Y= B_{i+1} \cup \{ v_{i - 2}, v_{i +1}\}$$ are two vertex-disjoint independent sets by (P2) and (P9). Then we may pick a matching $M$ of size $|S_2\setminus \{z\}| +2$ to cover $X$, $\{ v_{i - 2}, v_{i +1}\}$ and $|S_2\setminus \{z\}|$ vertices in $B_{i+1}$, and pick the independent set $Y \setminus V(M)$. It means that there are $|Y|$ disjoint $K_1$'s and $K_2$'s in $M$ and $Y \setminus V(M)$ to cover $X \cup Y$. Then $\theta_{1} =\theta(H[ X\cup Y ]) \leq |Y|$. By (P5), $Y \cup A= B_{i+1} \cup \{ v_{i - 2}, v_{i +1}\} \cup A$ is an independent set for any $A \in \{A_{i}, A_{i+2}\}$, which means that
$$\theta_{1} \leq  |Y| \leq\alpha(H) - |A|.$$
Therefore, when $A_{i-2} \cup A_{i-1}=\emptyset$, $\theta_{2} = \theta(H[V(H) \setminus (X \cup Y )]) =\theta(H[A_i \cup A_{i+1} \cup A_{i+2} \cup \{z, v_{i-1}, b_i, b_{i+2}\}])$. By (P3)-(P5), $H[\{ a_{i+1}, v_{i-1}, b_i, a_{i+2}, a_i, b_{i+2}\}] \cong K_1 \cup P_5$. Note that $z \sim \{b_i, b_{i+2}\}$. Then
 \begin{center}
$\theta_{2} \leq   \left\{
                 \begin{array}{ll}
                   4, & \hbox{when $A_{i}\cup A_{i+2} \neq\emptyset$,}\\
                    3, & \hbox{when $A_{i}\cup A_{i+2} =\emptyset$.}\\
                 \end{array}
               \right.$
\end{center}
Thus $\theta(H) \leq \theta_{1} + \theta_{2} \leq  \alpha(H) + 3$. When $A_{i-2} \cup A_{i-1} \neq\emptyset$, by (P8), we see that $B_{i}\nsim B_{i+1}$ or $B_{i+1} \nsim B_{i+2}$. By Lemma \ref{p2-p3 3}, we have that $ B_{j} =\{b_{j}\}$, $j \in \{i, i+1, i+2\}$, and $S_2 = \{z\} \sim \{b_i, b_{i+1}, b_{i+2}\}$. Noting that $\{b_i, b_{i+2}, v_{i},  v_{i+2}\}$ is an independent set by (P2) and (P6), we have $\alpha(H) \geq 4$. Thus $\theta(H)\leq \theta(H[\{b_i, b_{i+1}, b_{i+2}, z\} \cup \bigcup_{j=1}^{5} \{v_j\} \setminus \{v_{i-1}\}]) + \theta(H[\bigcup_{j=1}^{5} A_j \cup \{v_{i-1}\}]) \leq 4 +3 \leq\alpha(H) + 3$.

If $[B_i \cup B_{i+2}, S_2]= \emptyset$, then by (P2), (P6) and (P9), we may find two vertex-disjoint independent sets
\begin{equation}\label{hb=3, X'-Y}
\begin{aligned}
X'= B_{i} \cup B_{i+2}\cup S_{2} \cup \{v_{i}, v_{i + 2}\}, \ \  Y =B_{i+1} \cup \{ v_{i -2}, v_{i +1}\}.
\end{aligned}
\end{equation}
Let $Q, Q'\in \{B_i \cup B_{i+2}$, $B_{i+1}\setminus N(S_2)\}$ such that
\begin{equation}\label{hb=3, Q-Q'}
\begin{aligned}
|Q| = \max \{|B_i \cup B_{i+2}|, |B_{i+1}\setminus N(S_2)|\}\ \text{and}\  Q' \in  \{B_i \cup B_{i+2}, B_{i+1}\setminus N(S_2)\} \setminus Q.
\end{aligned}
\end{equation}
Then
\begin{equation}\label{hb=3, size}
\begin{aligned}
\alpha(H) \geq \max\{|X'|, |Y|\} =  |S_2| + |Q| +2.
\end{aligned}
\end{equation}
For any vertices $b \in B_i$, $b' \in B_{i+1}$ and $b'' \in B_{i+2}$, we have
\begin{equation}\label{hb=3, 2b=1}
\begin{aligned}
\theta_3 = \theta(H[\{b, b', b''\} \cup \bigcup_{j=1}^{5} v_j \setminus \{v_{i-1}\}]) + \theta(H[\bigcup_{j=1}^{5} A_j \cup \{v_{i-1}\}]) \leq 4 +3 =7.
\end{aligned}
\end{equation}

If $[B_i \cup B_{i+2}, S_2]= \emptyset$, and $B_i \nsim B_{i+1}$ or $B_{i+1} \nsim B_{i+2}$, then $| B_{i}|=| B_{i+1}|=|B_{i+2}|=1$ and $S_2 = \emptyset$ by Lemma \ref{p2-p3 3}. By  \eqref{hb=3, Q-Q'} and \eqref{hb=3, size}, $\alpha(H) \geq 4$. Then, by \eqref{hb=3, 2b=1}, $\theta(H)\leq \theta_3 \leq\alpha(H) + 3$.

If $[B_i \cup B_{i+2}, S_2]= \emptyset$, $[B_i, B_{i+1}] \neq\emptyset$ and $[B_{i+1}, B_{i+2}] \neq\emptyset$, then we discuss two subcases.
One subcase is when $B_{i+1} \setminus N(S_2)= \{b\} \nsim B_i \cup B_{i+2}$. By (P7), we see that $B_i = \{b_i\}$ and $B_{i+2} = \{b_{i+2}\}$. Then $S_2 \neq \emptyset$, for otherwise $B_{i+1} = \{b\} \nsim \{b_i, b_{i+2}\}$, a contradiction to $[B_i, B_{i+1}] \neq\emptyset$ and $[B_{i+1}, B_{i+2}] \neq\emptyset$. So we may pick a matching $M_1 =[S_2, B_{i+1} \cap N(S_2)]$ of size $|S_2|$ by (P11). It follows that $\theta_4 =  \theta(H[S_2 \cup (B_{i+1} \cap N(S_2))])\leq |M_1| = |S_2|$. Furthermore, by \eqref{hb=3, X'-Y} and \eqref{hb=3, 2b=1}, we have $\theta(H)\leq \theta_3 + \theta_4 \leq 7+ |S_2|  = |X'| +3 \leq \alpha(H) + 3$. The remaining subcase is when $B_{i+1} \setminus N(S_2)=\emptyset$, or $|B_{i+1} \setminus N(S_2)| \geq 2$, or $B_{i+1} \setminus N(S_2)= \{b\}$ and $[\{b\}, B_i \cup B_{i+2}] \neq\emptyset$. First, we may pick a matching $M_1$ (being empty when $S_2 = \emptyset$) of size $|S_2|$ to cover all vertices in $S_2 \cup (B_{i+1} \cap N(S_2))$ by (P11), pick two $K_2$'s, $v_{i} v_{i + 1}$ and $v_{i -2} v_{i +2}$, then pick a matching $M_2$ of size $|Q'|$ from $[Q, Q']$ to cover all vertices in $Q'$ and $|Q'|$ vertices in $Q$ by \eqref{hb=3, Q-Q'} and (P7), and finally pick the independent set $Q \setminus V(M_2)$ of size $|Q|- |Q'|$. It means that there are $|M_1| +2+ |M_2| + |Q \setminus V(M_2)|$ disjoint $K_1$'s and $K_2$'s to cover all vertices in $X' \cup Y$. Then $\theta(H[X' \cup Y])\leq |M_1| + 2+ |M_2| + |Q \setminus V(M_2)| = |S_2|  + 2 +|Q|$. Furthermore, by \eqref{hb=3, size}, $\theta(H)\leq \theta(H[X' \cup Y]) + \theta(H[\{v_{i-1}\} \cup \bigcup_{j=1}^{5} A_j]) \leq |S_2|  + 2 +|Q| + 3 \leq \alpha(H) + 3$.

\noindent {\bf Case 4.} $B_{i-1}, B_{i}, B_{i+2}\neq\emptyset$ for some $i \in [5]$.

If $S_2 \neq \emptyset$, or $S_2 = \emptyset$, $h_B \geq 4$ and $B_j \nsim B_{j+1}$ for some $j \in [5]$, or $S_2 = \emptyset$, $h_B \geq 4$ and $h_A \geq 1$, then $|B_j|\leq 1$ for every $j \in [5]$, $\bigcup_{j =1}^{5} B_j$ is an independent set and $S_2= \{z\} \sim \bigcup_{j =1}^{5} B_j$ when $S_2 \neq\emptyset$ by Lemmas \ref{p2-p3 2} (3), \ref{p2-p3 3} and  \ref{p2-p3 5}, respectively. By (P2), (P5) and (P6), $B_{i-1} \cup B_{i+2} \cup \{v_{i-1}, v_{i + 2}\} \cup A_{i-2} $ and $B_{i} \cup B_{i+2} \cup \{v_{i}, v_{i + 2}\} \cup A_{i+1} $ are two independent sets. Then
$$\alpha(H)  \geq \max \{4 + |A_{i-2}|, 4 + |A_{i+1}|, h_B\}. $$
Moreover,
 \begin{center}
$\theta_{1} =\theta(H[V(H)\setminus(\bigcup_{j=1}^{5} A_j \cup \{v_{i}\})]) \leq \left\{
                 \begin{array}{ll}
                   4, & \hbox{when $h_B =3$,} \\
                   5, & \hbox{when $h_B=4, 5$,}
                 \end{array}
               \right.$
\end{center}
and $\theta_{2} =\theta(H[\bigcup_{j=1}^{5} A_j \cup \{v_{i}\}]) \leq 3$. When $h_B =3, 5$, there is $\alpha(H) \geq \theta_{1}$, then $\theta(H)\leq \theta_{1} +\theta_{2} \leq\alpha(H) + 3$. When $h_B =4$, $\theta_{1}  \leq 5$. If $A_{i-2} \cup A_{i+1} \neq\emptyset$, then $\alpha(H) \geq 5\geq\theta_{1}$; if $A_{i-2} \cup A_{i+1}=\emptyset$, then $\alpha(H)  \geq 4\geq\theta_{1}-1$ and $\theta_{2} \leq 2$. So, when $h_B =4$, $\theta(H)\leq \theta_{1} +\theta_{2} \leq\alpha(H) + 3$.

Next we discuss the remaining subcases when $S_2 = \emptyset$. By (P2) and (P6), we may find four independent sets
\begin{equation}\label{case4,xyz}
\begin{aligned}
X&= B_{i-1} \cup B_{i+2} \cup \{v_{i-1}, v_{i + 2}\}, \ \ \ Y= B_i \cup \{v_{i}, v_{i-2}\}, \\
Z&=B_{i} \cup B_{i-2} \cup \{v_{i}, v_{i -2}\}, \ \ \ \ \ \ \ \  W= B_{i} \cup B_{i+2} \cup \{v_{i}, v_{i+2}\}.
\end{aligned}
\end{equation}

When $S_2=\emptyset$ and $h_B =3$, w.l.o.g., let $|B_{i-1}| \geq |B_i|$. Clearly, $V(H) =X \cup Y \cup \bigcup_{j=1}^{5} A_j \cup \{v_{i+1}\}$. If $[B_{i-1}, B_i] \neq\emptyset$, then by (P7), we may pick a matching $M_1$ of size $|B_i| +2$  to cover all vertices in $Y\cup\{v_{i-1}, v_{i + 2}\}$ and $|B_i|$ vertices in $B_{i-1}$, and pick the independent set $X \setminus V(M_1)$. It means that there are $|X|$ disjoint $K_1$'s and $K_2$'s to cover $X \cup Y$. Then $\theta(H)\leq \theta(H[ X \cup Y]) + \theta(H[\bigcup_{j=1}^{5} A_j \cup \{v_{i+1}\}]) \leq |X| + 3\leq \alpha(H) +3$. If $[B_{i-1}, B_i] =\emptyset$, then $B_{i-1}= \{b_{i-1}\}$ and $B_i =\{b_{i}\}$ by (P7). So $V(H) = \{b_{i-1}, b_i\} \cup B_{i+2} \cup \bigcup_{j=1}^{5} (A_j \cup \{v_j\})$. Note that $b_{i-1}v_{i+1}, b_iv_{i-1}, v_{i-2}v_{i+2}, v_{i+1}b \in E(H)$ for any vertex $b \in B_{i+2}$. Combining the independent set $B_{i+2} \setminus \{b\}$, we have $\theta(H[V(H) \setminus\bigcup_{j=1}^{5} A_j])$ $ \leq 4 + |B_{i+2}| -1 = |X|$. Thus $\theta(H)\leq \theta(H[V(H) \setminus\bigcup_{j=1}^{5} A_j]) + \theta(H[\bigcup_{j=1}^{5} A_j]) \leq |X| + 3\leq \alpha(H) +3$.

When $S_2=\emptyset$ and $h_B \geq 4$, for some $i \in [5]$, we assume that $B_{j} \neq \emptyset$ when $j \neq i+1$. According to the above discussion, we may assume that $h_A=0$ and $[B_j, B_{j+1}] \neq \emptyset$, $j \in[5] \setminus  \{i, i+1\}$. Note that it is exactly the subcase that $H \in \mathcal{B}_2$ when $h_B=5$. First, we establish a claim as follows.

\begin{cla}\label{case4,4b}
Let $\theta_3 =\theta(H[ \bigcup_{j=1}^{5} B_j \setminus B_{i+1}])$. Then $\theta_3 \leq  \alpha(H)-2$.
\end{cla}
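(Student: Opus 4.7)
The plan is to exploit the near-bipartite structure of $H[B_{i-2}\cup B_{i-1}\cup B_i\cup B_{i+2}]$. By (P6), $B_{i-2}\nsim B_i$, $B_i\nsim B_{i+2}$, and $B_{i-1}\nsim B_{i+2}$, so the only possible edges among these four sets lie in the three bipartite pieces $[B_{i-2},B_{i-1}]$, $[B_{i-1},B_i]$, and $[B_{i+2},B_{i-2}]$; by the standing assumption of this subcase each of these pieces is non-empty, and by (P7) the missing edges in each piece form a matching within its bipartite frame. Since $H$ is triangle-free, every minimum clique cover uses only $K_1$'s and $K_2$'s, so $\theta_3=a+b+c+d-\nu$, where $a=|B_{i-2}|$, $b=|B_{i-1}|$, $c=|B_i|$, $d=|B_{i+2}|$, and $\nu$ is the matching number of the induced subgraph.

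I first lower-bound $\alpha(H)$. By (P6), $B_{i-2}\cup B_i$, $B_i\cup B_{i+2}$, and $B_{i-1}\cup B_{i+2}$ are independent, and $v_j$ is non-adjacent to $B_j\cup B_{j+2}$ by the definition of the sets $B_\ell$. Thus
\[
B_i\cup B_{i+2}\cup\{v_i,v_{i+2}\},\quad B_{i-2}\cup B_i\cup\{v_{i-2},v_i\},\quad B_{i-1}\cup B_{i+2}\cup\{v_{i-1},v_{i+2}\}
\]
are independent sets in $H$, giving $\alpha(H)\geq\max\{a+c,\,b+d,\,c+d\}+2$.

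I then upper-bound $\theta_3$ by producing an explicit cover by $K_1$'s and $K_2$'s. A Hall-type argument from (P7) shows that in each near-complete bipartite piece one can find a matching saturating the smaller side: any subset $S$ of size at least two on one side has the whole opposite side as its neighborhood (any putative non-neighbor would have to miss every vertex of $S$, contradicting (P7)), and the singleton case is handled using $[B_j,B_{j+1}]\neq\emptyset$. I split into four cases by the signs of $b-c$ and $a-d$. When $b\leq c$ and $a\leq d$, saturate $B_{i-1}$ into $B_i$ and $B_{i-2}$ into $B_{i+2}$; the uncovered vertices lie in $B_i\cup B_{i+2}$, which is independent by (P6), so $\theta_3\leq c+d$. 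When $b\leq c$ and $a\geq d$, saturate $B_{i-1}$ into $B_i$ and $B_{i+2}$ into $B_{i-2}$, giving $\theta_3\leq a+c$; the symmetric case $b\geq c$, $a\leq d$ yields $\theta_3\leq b+d$. Finally, when $b\geq c$ and $a\geq d$, saturate $B_i$ into $B_{i-1}$ (using $c$ vertices) and $B_{i+2}$ into $B_{i-2}$ (using $d$ vertices), then extend by a matching of size $\min(b-c,a-d)$ in the residual bipartite graph on the remaining $b-c$ vertices of $B_{i-1}$ and $a-d$ vertices of $B_{i-2}$; this residual is still near-complete bipartite, and a short arithmetic yields $\theta_3\leq\max\{a+c,\,b+d\}$. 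In every case $\theta_3\leq\max\{a+c,\,b+d,\,c+d\}\leq\alpha(H)-2$, as required.

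The main obstacle is the bookkeeping across the four cases together with verifying that, after the two saturating matchings are removed, the residual bipartite graph $[B_{i-2}\setminus V(M),\,B_{i-1}\setminus V(M)]$ in the final case is still near-complete; this follows because deleting vertices cannot create new missing edges, and the original missing edges in $[B_{i-2},B_{i-1}]$ already formed a matching. Every other step is an immediate consequence of (P6), (P7), and the triangle-freeness of $H$.
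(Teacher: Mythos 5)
Your overall strategy (reduce $\theta_3$ to ``number of vertices minus a matching'' in the triangle-free graph on $B_{i-2}\cup B_{i-1}\cup B_i\cup B_{i+2}$, and compare with the independent sets $B_i\cup B_{i+2}\cup\{v_i,v_{i+2}\}$, $B_{i-2}\cup B_i\cup\{v_{i-2},v_i\}$, $B_{i-1}\cup B_{i+2}\cup\{v_{i-1},v_{i+2}\}$) is the same in spirit as the paper's, which instead matches the smaller of the two ``independent pairs'' $B_{i-1}\cup B_{i+2}$ and $B_i\cup B_{i-2}$ entirely into the larger one and gets $\theta_3\le |Z|-2$ or $|W|-2$ directly. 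Your first three cases are fine. The problem is the fourth case, $b\ge c$, $a\ge d$ (with $a=|B_{i-2}|$, $b=|B_{i-1}|$, $c=|B_i|$, $d=|B_{i+2}|$): the assertion that the residual bipartite graph on the $b-c$ uncovered vertices of $B_{i-1}$ and the $a-d$ uncovered vertices of $B_{i-2}$ contains a matching of size $\min(b-c,a-d)$ does not follow from its being ``near-complete''. A complete bipartite graph minus a matching with both parts of size $1$ may have no edge at all; your Hall argument only works when some residual side has at least two vertices. Concretely, if $b-c=a-d=1$ and the two leftover vertices $x\in B_{i-1}$, $y\in B_{i-2}$ are non-adjacent (and with $c=1$, $d=1$ the leftover vertices can be forced, so you cannot simply re-choose the saturating matchings), your construction uses $a+b=\max\{a+c,\,b+d\}+1$ cliques, while the independent sets you exhibit only give $\alpha(H)\ge\max\{a+c,\,b+d,\,c+d\}+2$; the required inequality $\theta_3\le\alpha(H)-2$ is then not established.

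This corner case turns out to be vacuous, but showing that needs an argument you do not supply and which is not among (P1)--(P11): if $p\in B_{j}$ and $q\in B_{j+1}$ with $p\nsim q$, then $q$ has no neighbour in $B_{j+2}$ and $p$ has no neighbour in $B_{j-1}$ (otherwise, e.g., the edge $p\,v_{j-1}$ together with the induced path $v_{j+2}\,q\,r$ for a neighbour $r\in B_{j+2}$ of $q$ gives an induced $P_2\cup P_3$). Applied to a non-adjacent leftover pair $y\in B_{i-2}$, $x\in B_{i-1}$, this forces $c=d=1$ via (P7), hence $a=b=2$, and then applying it again to $x$ and the unique vertex of $B_i$ forces $a\le 1$, a contradiction. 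Without this (or an alternative such as the paper's pairing, which covers $B_{i-1}\cup B_{i+2}$ against $B_i\cup B_{i-2}$ and never forms your $1\times 1$ residual), your final case is incomplete, so as written the proof has a genuine gap.
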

\begin{proof}
By \eqref{case4,xyz}, w.l.o.g., we may assume that $|Z| \geq |X|$, i.e. $|B_{i} \cup B_{i-2} | \geq |B_{i-1} \cup B_{i+2}|$. Clearly,
\begin{equation}\label{case4,zw}
\begin{aligned}
\alpha(H) \geq \max \{|Z|, |W|\}.
 \end{aligned}
\end{equation}
If $|B_{i-2}| \geq |B_{i+2}|$, then by (P7), $[B_j, B_{j+1}] \neq \emptyset$ $(j \in[5] \setminus  \{i, i+1\})$ and $|B_{i} \cup B_{i-2} | \geq |B_{i-1} \cup B_{i+2}|$, we can pick a matching $M_2$ of size $|B_{i-1} \cup B_{i+2}|$ to cover all vertices in $B_{i-1} \cup B_{i+2}$ and $|B_{i-1} \cup B_{i+2}|$ vertices in $B_{i} \cup B_{i-2}$, and pick the independent set $(B_{i} \cup B_{i-2}) \setminus V(M_2)$ of size $|B_{i} \cup B_{i-2} | - |B_{i-1} \cup B_{i+2}|$. It follows that there are $|B_{i} \cup B_{i-2} |$ disjoint $K_1$'s and $K_2$'s to cover $\bigcup_{j=1}^{5} B_j \setminus B_{i+1}$. By \eqref{case4,xyz} and  \eqref{case4,zw}, we see that
\begin{center}
$\theta_3 = \theta(H[ \bigcup_{j=1}^{5} B_j \setminus B_{i+1}]) \leq |B_{i} \cup B_{i-2} |= |Z|-2\leq  \alpha(H)-2$.
\end{center}
If $|B_{i-2}|< |B_{i+2}|$, then $|B_{i}| >|B_{i-1}|$ by $|B_{i} \cup B_{i-2} | \geq |B_{i-1} \cup B_{i+2}|$. So we can pick a matching $M_3$ of size $|B_{i-2} \cup B_{i-1}|$ to cover all vertices in $B_{i-2} \cup B_{i-1}$ and $|B_{i-2} \cup B_{i-1}|$ vertices in $B_{i} \cup B_{i+2}$, and pick the independent set $(B_{i} \cup B_{i+2}) \setminus V(M_3)$ of size $|B_{i} \cup B_{i+2}| -|B_{i-2} \cup B_{i-1}|$. It means that there are $|B_{i} \cup B_{i+2}|$ disjoint $K_1$'s and $K_2$'s to cover $\bigcup_{j=1}^{5} B_j \setminus B_{i+1}$. By \eqref{case4,xyz} and  \eqref{case4,zw},
\begin{center}
$\theta_3 \leq |B_{i} \cup B_{i+2}| = |W|-2\leq  \alpha(H)-2$.
\end{center}
In either of the situations, $\theta_3 \leq  \alpha(H)-2$.
\end{proof}

When $S_2=\emptyset$ and $h_B =4$, we assume that $B_{i+1} = \emptyset$. As noted above, $h_A =0$. Then $V(H) = \bigcup_{j=1}^{5} (\{v_j\} \cup B_j) \setminus B_{i+1}$. By Claim \ref{case4,4b}, we have $\theta_3 \leq  \alpha(H)-2$. Noting that $\theta(H[\bigcup_{j=1}^{5} \{v_j\} ]) \leq 3$, we have $\theta(H)\leq \theta_3 + \theta(H[\bigcup_{j=1}^{5} \{v_j\} ])  \leq  \alpha(H)-2 + 3 =\alpha(H) + 1$.

When $S_2=\emptyset$ and $h_B = 5$, this is exactly the subcase that  $H \in \mathcal{B}_2$. Here $V(H) = \bigcup_{j=1}^{5} (\{v_j\} \cup B_j)$. W.l.o.g., let $| B_{i+1}|  =\min \{|B_j|, j \in [5]\}$ for some $i \in [5]$, then we have $\alpha(H) \geq \max \{|X|, |Z|\} \geq 2 |B_{i+1}|+2$ by \eqref{case4,xyz}. Then $|B_{i+1}| \leq \alpha(H)/2 -1$, which means $\theta_{4} =\theta(H[B_{i+1} \cup \{v_{i}\}]) \leq \alpha(H)/2 -1$. By Claim \ref{case4,4b} and $\theta_{5} =\theta(H[ \bigcup_{j=1}^{5} \{v_j\} \setminus \{v_{i}\}]) =2$, we have $\theta(H)\leq \theta_{3} + \theta_{4} + \theta_{5}\leq \alpha(H)-2 + \alpha(H)/2 -1 + 2 =  3\alpha(H)/2 -1$.

Totally, in Case 4, $\theta(H) \leq  \alpha(H)+3$ when $H \notin \mathcal{B}_2$; $\theta(H)  \leq  3\alpha(H)/2 -1$ when $H \in \mathcal{B}_2$. We next show that the bound $3\alpha(H)/2 -1$ is tight for graphs in $\mathcal{B}_2$ by virtue of the following claim.

\begin{cla}\label{case4,equa2}
For every $H_t \in \mathcal{B}_2$ with $|B_j| =t \geq 1$ for each $j \in [5]$, $\alpha(H_t) = 2t +2$.
\end{cla}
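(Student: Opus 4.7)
Since $H_t \in \mathcal{B}_2$ satisfies $S_2 = \emptyset$, $h_A = 0$, $h_B = 5$, and $[B_j, B_{j+1}] \neq \emptyset$ for every $j \in [5]$, with $|B_j| = t$, its vertex set is $V(H_t) = \bigcup_{j=1}^{5}(\{v_j\} \cup B_j)$. The lower bound $\alpha(H_t) \geq 2t+2$ is immediate from \eqref{case4,xyz} with $h_A = 0$: the set $B_{i-1} \cup B_{i+2} \cup \{v_{i-1}, v_{i+2}\}$ is independent by (P2) and (P6), and has size $2t+2$.

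For the upper bound, I would take any independent set $Q$, write $Q_V = Q \cap V(C_5)$ and $Q_B = Q \setminus Q_V$, and set $n_j = |Q \cap B_j|$. Since $H_t[V(C_5)] \cong C_5$, $|Q_V| \leq 2$; and since each $v_k$ is complete to $B_{k-1} \cup B_{k+1}$, $v_k \in Q$ forces $n_{k-1} = n_{k+1} = 0$. The decisive structural input is (P7): whenever $n_j, n_{j+1} \geq 1$, each $b \in Q \cap B_j$ has at most one non-neighbor in $B_{j+1}$, so in fact $n_j = n_{j+1} = 1$. In particular, if $n_j \geq 2$ then $n_{j-1} = n_{j+1} = 0$, so the set $T := \{j : n_j \geq 2\}$ is an independent set in $C_5$ and hence $|T| \leq 2$.

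The proof then splits on $|T|$. If $|T| = 2$, then $T = \{j, j+2\}$ for some $j$, three of the $n$'s vanish, so $|Q_B| \leq 2t$ and $|Q| \leq 2t + 2$. If $|T| = 1$, say $T = \{j\}$, then $n_{j-1} = n_{j+1} = 0$, $n_j \leq t$, $n_{j \pm 2} \leq 1$; the constraint $v_k \in Q_V \Rightarrow k \in \{j, j-2, j+2\}$ combined with which of $n_{j \pm 2}$ are nonzero gives $|Q_V| \leq 2$ when $n_{j-2} + n_{j+2} \leq 1$ and $|Q_V| \leq 1$ when $n_{j-2} = n_{j+2} = 1$, so in every subcase $|Q| \leq t + 3 \leq 2t + 2$. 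If $|T| = 0$, then every $n_j \in \{0,1\}$ and one enumerates the support $S := \{j : n_j = 1\}$: routine case-work on $|S| \in \{0,1,2,3,4\}$ gives $|Q| \leq 4$, while the borderline subcase $|S| = 5$ (possible only for $t \geq 2$) has $|Q_V| = 0$ and $|Q| = 5 \leq 2t + 2$. The case $t = 1$ with $|S| = 5$ is excluded because the hypothesis $[B_j, B_{j+1}] \neq \emptyset$ with $|B_j| = 1$ forces every $b_j b_{j+1}$ to be present, so no system of representatives of the $B_j$'s is independent.

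The main obstacle is purely organizational: $|Q_V|$ and $|Q_B|$ trade off tightly in the $|T| = 1$ and $|T| = 0$ regimes, so one must track carefully which $v_k$'s remain available after fixing the support of $Q_B$, and handle the small-$t$ boundary $t = 1$ where no slack is left in the inequality. The structural lemmas needed are already supplied by (P2), (P6), (P7), and the definition of $\mathcal{B}_2$.
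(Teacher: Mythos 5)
Your proof is correct and rests on the same ingredients as the paper's own argument: the lower bound comes from the independent sets in \eqref{case4,xyz}, and the upper bound from (P7) (consecutive classes meeting the independent set must do so in single vertices) together with the fact that each $v_k$ is complete to $B_{k-1}\cup B_{k+1}$ and $\alpha(C_5)=2$. The only difference is organizational: you bound an arbitrary independent set directly via the split on $T=\{j: n_j\ge 2\}$ and the support $S$, treating $t=1$ inside the framework through the $|S|=5$ subcase, whereas the paper argues by contradiction from $|I|\ge 2t+3$, deduces $|I\cap\bigcup_{j}B_j|\ge 2t+1$, and analyzes the configurations of the nonempty classes (three consecutive versus exactly one consecutive pair), dismissing $t=1$ as immediate; both routes deliver the same equality.
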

\begin{proof}
Clearly, $V(H_t) = \bigcup_{j=1}^{5} (\{v_j\} \cup B_j)$ for every $H_t \in \mathcal{B}_2$. When $t=1$, it is easy to see that $\alpha(H_t) =4$. Next we assume that $t \geq 2$.

By \eqref{case4,xyz}, $\alpha(H_t) \geq 2t +2$. To the contrary, suppose that there is an independent set, say $I$, in $H_t$ with size
$|I| \geq 2t +3$.
Then, by $H_t[ \bigcup_{j=1}^{5} \{v_j\}] \cong C_5$, we have
\begin{equation}\label{case4,I}
\begin{aligned}
|I \cap \bigcup_{j=1}^{5} B_j|\geq 2t +1.
 \end{aligned}
\end{equation}
Furthermore, let $D$ denote the set of $\{B_j|B_j \cap I \neq \emptyset, j \in [5]\}$, then we have that $|D| \geq 3$ by $|B_j| =t \geq 2$ for each $j \in [5]$. Then there exists some $i \in[5]$ such that $B_i \cap I \neq \emptyset$ and $B_{i+1} \cap I \neq \emptyset$. By (P7), $|B_i \cap I| = |B_{i+1} \cap I | =1$. So, if $|D| = 3$ and there exactly exist three non-consecutive $B_i$, $B_{i+1}$, $B_{i-2}$  in $D$ for some $i \in [5]$, then $|I \cap \bigcup_{j=1}^{5} B_j|\leq |B_{i-2}| + 2 =t +2<2t +1 $ by $t \geq 2$, a contradiction to \eqref{case4,I}. Moreover, if there exist three consecutive $B_i$, $B_{i+1}$, $B_{i+2}$ in $D$ for some $i \in [5]$, then $|B_j \cap I| = 1$ for each $B_j \in D$ $(j \in [5])$, and $I \cap \bigcup_{j=1}^{5} \{v_j\}  =\emptyset$ by the definition of $B_j$ $(j\in\{i, i+1, i+2\})$. It means that $ 3\leq |D| = |I| \leq 5 <  2t +2$ by $t \geq 2$, a contradiction to  $|I| \geq 2t +3$. Therefore $\alpha(H_t) = 2t +2$.
\end{proof}
\begin{example}\label{b2}
Let $H_t \in  \mathcal{B}_2 \subseteq  \mathcal{H}$ be a graph with $|B_j| =t \geq 1$ for each $j \in [5]$. Then $H_t$ is $K_3$-free, $\theta(H_t)\leq  3\alpha(H_t)/2 -1$ and $|V(H_t)| = 5t+5$. By Claim \ref{case4,equa2}, we have $\lceil (5t+5)/ 2\rceil \leq \theta(H_t) \leq 3\alpha(H_t)/2 -1 =3t+2$. Thus $\theta(H_1)=5=3\alpha(H_1)/2 -1$; $\theta(H_2)=8= 3\alpha(H_2)/2 -1$.
\end{example}
According to the discussion in Cases 1-4, for any $H \in \mathcal{H}$, $\theta(H) \leq \max\{\alpha(H)+3, 2\alpha(H)-2\}$. Particularly, $\theta(H)  \leq 2\alpha(H)-2$ when $H \in \mathcal{B}_1$; $\theta(H)  \leq 3\alpha(H)/2 -1$ when $H \in \mathcal{B}_2$; $\theta(H)  \leq \alpha(H) +3$ when $H \notin \mathcal{B}_1 \cup \mathcal{B}_2$. We have shown that the bounds $2\alpha(H)-2$, $3\alpha(H)/2 -1$ are tight for $\mathcal{B}_1$, $\mathcal{B}_2$ by Example \ref{b1} and Example \ref{b2}, respectively. Here we give another example to show that the bound $\alpha(H)+3$ is tight for graphs in $\mathcal{H} \setminus (\mathcal{B}_1 \cup \mathcal{B}_2)$.

\begin{example}\label{nob1b2}
For graph $H^*$ in Figure $2$, it satisfies that $h_A =5$, $S_2=\{z\}$, $B_{i}=\{b_i\}$ for each $i \in [5]$, $\bigcup_{i=1}^{5} \{b_i\}$ is an independent set and $\{z\} \sim \bigcup_{i=1}^{5} \{b_i\}$. Clearly, $H^* \in \mathcal{H} \setminus (\mathcal{B}_1 \cup \mathcal{B}_2)$. Since $H^*$ is $K_3$-free and  $|V(H^*)| = 16$, there is $|V(H^*)|/2 =8\leq \theta(H^*)\leq  \alpha(H^*) +3$. Moreover, by Theorem \ref{thm-p2p31}, $\alpha(H^*) =(|V(H^*)|-1)/3 =5$. Thus $\theta(H^*)=8= \alpha(H^*)+3$.
\end{example}

We will complete the proof by showing the remaining case that $H$ is an imperfect disconnected graph. Then $H$ is a union of a graph $H' \in \mathcal{H}$ and $n-|H'|$ isolated vertices according to Remark. It follows that $\chi(G)=\theta(H) \leq \theta(H') + n- |H'|  \leq \max\{\alpha(H')+3, 2\alpha(H')-2\} + n- |H'| \leq \max\{\alpha(H) +3, 2\alpha(H)-2\} = \max\{\omega(G) +3, 2\omega(G)-2\}$. In particular, when $H' \in \mathcal{B}_1$, $\theta(H) \leq \theta(H') + n- |H'|  \leq 2\alpha(H') -2 + n- |H'| = 2\alpha(H) -2 -( n- |H'|) < 2\alpha(H) -2$; when $H' \in \mathcal{B}_2$, $\theta(H) \leq \theta(H') + n- |H'|  \leq 3\alpha(H')/2 -1 + n- |H'| = 3\alpha(H)/2 -1 -( n- |H'|)/2 < 3\alpha(H)/2 -1$; when $H' \in \mathcal{H} \setminus(\mathcal{B}_1 \cup \mathcal{B}_2)$, $\theta(H) \leq \theta(H') + n- |H'|  \leq \alpha(H')+3 + n- |H'| = \alpha(H)+3$.

\section{Concluding Remarks}
In 2008, Choudum, Karthick and Shalu \cite{cks} showed that every $\{3K_1, \mathrm{paraglider}\}$-free graph $G$ is $2\omega(G)$-colorable. Theorem \ref{thm-paraglider-boundedness} improves the bound to $\max\{\omega(G)+3, 2\omega(G)-2\}$, which is tight. In particular, the chromatic number is no more than $\omega(G)+3$ for $\{3K_1, \mathrm{paraglider}\}$-free graphs when $G\notin\mathcal{B}_1\cup \mathcal{B}_2$.

In the same paper, Choudum, Karthick and Shalu obtained the following result.

\begin{thm}[\cite{cks}]\label{thm-House, W_4}

Let $G$ be a graph with $\alpha(G) \leq 2$.

\begin{itemize}
\item[\emph {(1)}]  If $G$ is $house$-free, then $\chi(G) \leq \lfloor  3\omega(G)/2 \rfloor$.
\item[\emph {(2)}]  If $G$ is $W_4$-free, then $G$ is a perfect graph, $\mathbb{K}[C_5] + K_t$ for some $t \geq 1$, $\mathbb{K}[\overline{C_{7}}]$, or a generalized buoy $\mathbb{GB}$.
\end{itemize}
\end{thm}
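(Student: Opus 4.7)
The plan is to address both parts by passing to the complement $H = \overline{G}$, where $\alpha(G) \leq 2$ translates to $H$ being triangle-free, and the forbidden induced subgraph becomes easier to analyze: one checks directly that $\overline{\text{house}} \cong P_5$, so $(1)$ is equivalent to bounding $\theta(H) \leq \lfloor 3\alpha(H)/2 \rfloor$ for every $\{K_3, P_5\}$-free graph $H$; and $\overline{W_4} \cong 2K_2 \cup K_1$, so $(2)$ is a classification of $\{K_3, 2K_2 \cup K_1\}$-free graphs. In both cases, triangle-freeness means that every clique in $H$ has at most two vertices, so a minimum clique cover of $H$ is simply a maximum matching plus isolated vertices, i.e. $\theta(H) = |V(H)| - \mu(H)$.

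For $(1)$, I would split on whether $H$ contains an induced $C_5$. If $H$ is $\{K_3, P_5, C_5\}$-free, then any shortest odd cycle would be induced, triangle-free, and of length at least $7$, but then it would contain an induced $P_5$; hence $H$ is bipartite and K\"onig's theorem gives $\theta(H) = |V(H)| - \mu(H) = \alpha(H) \leq \lfloor 3\alpha(H)/2 \rfloor$. If $H$ contains an induced $C_5$, I would set up the stratification $A_i, B_i, S_2$ around $V(C_5)$ as in Section~3, but redo the forbidden-subgraph lemmas for $P_5$-freeness instead of $(P_2 \cup P_3)$-freeness. The $P_5$-free condition forces strong adjacencies between consecutive $B_i$'s and a very restricted attachment of $S_2$; the $C_5$ itself contributes $3$ to $\theta$ against $\alpha = 2$, which is precisely where the slack $\lfloor \alpha/2 \rfloor$ between the target bound and $\alpha$ is spent, and the remaining vertices can be paired off into a matching without using additional clique-cover budget.

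For $(2)$, I would combine the Strong Perfect Graph Theorem with the parity/size constraints imposed by $\alpha(G) \leq 2$. If $G$ is perfect we are done; otherwise, by Theorem~\ref{thm-p1}, either $G$ or $\overline{G}$ contains an induced $C_{2k+1}$ with $k \geq 2$. Since $\alpha(C_{2k+1}) = k$, the first option forces $k = 2$ so $G$ contains an induced $C_5$; the second option gives $G \supseteq \overline{C_{2k+1}}$ for some $k \geq 2$. A direct verification in the circulant structure shows $\overline{C_{2k+1}}$ contains $W_4$ for every $k \geq 4$ (take an induced $4$-cycle obtained from vertices at the appropriate distances and exhibit the common neighbor), while $\overline{C_7}$ is $W_4$-free. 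Hence an imperfect $W_4$-free graph $G$ with $\alpha(G) \leq 2$ must contain an induced $C_5$ or an induced $\overline{C_7}$. In the $\overline{C_7}$-case, I would analyze how any additional vertex $v$ attaches to the seven distinguished vertices, using $W_4$-freeness to forbid certain neighborhoods and $\alpha \leq 2$ to forbid their complements, concluding that $v$ must clone exactly one $\overline{C_7}$-vertex; this yields $G \cong \mathbb{K}[\overline{C_7}]$. In the $C_5$-case, a parallel analysis around the $C_5$ produces $\mathbb{K}[C_5] + K_t$ when the attachment is uniform, and a small number of additional attachment patterns that stay within $W_4$-free/$\alpha\leq2$ precisely cut out the class of generalized buoys.

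The main obstacle in $(1)$ will be the $C_5$-containing case: certifying that even in the presence of many vertices attaching in various ways to the $C_5$, one can always build a matching of size at least $|V(H)| - \lfloor 3\alpha(H)/2 \rfloor$, which amounts to controlling how much the deficit $|V(H)| - 2\mu(H)$ can exceed $\alpha(H)$. The main obstacle in $(2)$ will be in the $C_5$-case: correctly identifying exactly which non-uniform attachments to a $\mathbb{K}[C_5]$-blowup yield $W_4$-free graphs with $\alpha \leq 2$, as this is what determines the exact definition of the generalized buoy class $\mathbb{GB}$ and requires showing that no further exotic graphs escape the list.
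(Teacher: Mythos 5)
This statement is not proved in the paper at all: it is Theorem~\ref{thm-House, W_4}, imported verbatim from Choudum, Karthick and Shalu \cite{cks}, so there is no in-paper argument to compare yours against. Judged on its own terms, your proposal gets the correct reductions --- $\overline{\text{house}}\cong P_5$ and $\overline{W_4}\cong 2K_2\cup K_1$, so (1) becomes $\theta(H)\le\lfloor 3\alpha(H)/2\rfloor$ for $\{K_3,P_5\}$-free $H$ and (2) a classification of $\{K_3, 2K_2\cup K_1\}$-free graphs --- and the bipartite subcase of (1) via K\H{o}nig is sound (a shortest odd cycle is induced, and an induced odd cycle of length at least $7$ contains an induced $P_5$). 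The SPGT dichotomy in (2) and the verification that $\overline{C_{2k+1}}$ contains an induced $W_4$ for $k\ge 4$ while $\overline{C_7}$ does not are also correct.

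The genuine gap is that in both parts the entire load-bearing case is left as a declared intention. In (1), the $C_5$-containing case is where the theorem lives, and you have not carried out the structural lemmas: note that $P_5$-freeness is much stronger than you use --- it forces $S_2=\emptyset$ \emph{and} every $A_i=\emptyset$ (e.g.\ $a\in A_1$ gives the induced path $a\,v_1v_2v_3v_4$), so $V(H)=V(C_5)\cup\bigcup B_i$; the remaining work is to exhibit a matching of $\bigcup B_i$ into itself and onto the $C_5$ large enough that $|V(H)|-\mu(H)\le\lfloor 3\alpha(H)/2\rfloor$, and the sentence ``the remaining vertices can be paired off \ldots without using additional clique-cover budget'' is precisely the claim that needs a proof, not a proof. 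In (2), the class $\mathbb{GB}$ of generalized buoys is never defined (neither in this paper nor in your proposal), so the statement ``a small number of additional attachment patterns \ldots precisely cut out the class of generalized buoys'' is circular as written: the case analysis of how vertices attach to the $C_5$ under $W_4$-freeness and $\alpha\le 2$ is exactly the content of the theorem in \cite{cks}, and without it you have only verified the easy containments. As submitted, this is a plausible plan of attack rather than a proof of either part.
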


In 2007, Chudnovsky and Ovetsky \cite{co} showed the following result, where a $quasi$-$line$ graph $G$ is one in which every neighborhood $N(v)$ can be expressed as the union of two cliques for each vertex $v \in V(G)$.
\begin{thm}[\cite{co}]\label{thm-quasiline}
If $G$ is a quasi-line graph, then $\chi(G) \leq 3\omega(G)/2$.
\end{thm}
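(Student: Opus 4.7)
The plan is to proceed by induction on $|V(G)|$, invoking the Chudnovsky--Seymour structure theorem for quasi-line graphs as the central tool. That theorem asserts that every connected quasi-line graph is either a fuzzy circular interval graph, or it admits a nontrivial decomposition as a composition of linear interval strips glued along a skeleton multigraph. Each reduction step shrinks the graph (by peeling off a strip or by restricting to a subarc) while preserving the quasi-line property, so the induction reduces the theorem to two structurally simpler base cases.

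For fuzzy circular interval graphs, I would fix a circular representation in which $V(G)$ sits as points on a circle and maximal cliques are supported on arcs of at most $\omega(G)$ consecutive vertices (up to a controlled set of fuzzy edges). A cyclic greedy coloring then traverses the circle assigning colors from $\{1, 2, \ldots, \omega(G)\}$ in a round-robin pattern; this is proper everywhere except on a bounded ``wrap-around'' segment where the cyclic mismatch between the initial and final color classes can force conflicts. Repairing that segment by introducing at most $\lceil \omega(G)/2 \rceil$ extra colors produces a proper coloring with at most $\lceil 3\omega(G)/2 \rceil$ colors, and this is precisely where the constant $3/2$ enters the final bound.

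For the composition case, each strip is a linear interval graph, hence perfect, and so each strip internally admits an $\omega(G)$-coloring; the only difficulty is consistency at skeleton vertices, where the endpoint cliques of several strips glue together into a single clique of size at most $\omega(G)$. I would color all strip interiors from a common palette of $\omega(G)$ colors, and then use up to $\lfloor \omega(G)/2 \rfloor$ additional ``swap'' colors to reconcile clashing endpoint palettes across each skeleton vertex, using the fact that every vertex's neighborhood splits into two cliques to bound the conflict graph at each junction. The main obstacle I anticipate is rigorously pushing the cyclic greedy argument through in the presence of fuzzy edges: those edges permit controlled deviations from a clean proper circular-arc representation, and verifying that the wrap-around repair still fits within $\lceil \omega(G)/2 \rceil$ extra colors — and not more — is where the technical heart of the Chudnovsky--Ovetsky argument lives.
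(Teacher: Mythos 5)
This theorem is not proved in the paper at all: it is quoted verbatim from Chudnovsky and Ovetsky \cite{co} and used as a black box in Corollary \ref{qhw}, so there is no in-paper argument to compare against. Your outline does track the broad strategy of the cited proof --- invoke the Chudnovsky--Seymour structure theorem to split into fuzzy circular interval graphs and compositions of linear interval strips, and handle the two cases separately --- so the skeleton is the right one. But as it stands the proposal is a roadmap rather than a proof, and two of its steps would not deliver the stated bound even if the sketched constructions were carried out.

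First, in the circular-interval case your accounting gives $\omega(G)+\lceil \omega(G)/2\rceil=\lceil 3\omega(G)/2\rceil$ colors, which exceeds the claimed bound $3\omega(G)/2$ whenever $\omega(G)$ is odd; the Tucker-type result one actually needs here is $\chi\leq\lfloor 3\omega/2\rfloor$ for (fuzzy) circular interval graphs, and obtaining the floor rather than the ceiling is not a cosmetic adjustment --- the naive round-robin-plus-repair argument does not produce it, and handling the fuzzy pairs on top of that is, as you note yourself, left open. Second, in the composition case the assertion that $\lfloor\omega(G)/2\rfloor$ additional ``swap'' colors reconcile the endpoint cliques is exactly the content that needs proof: the conflicts arise at every skeleton vertex simultaneously, and nothing in the sketch prevents the repair colors demanded at distinct junctions from accumulating; controlling this globally (and dealing with strips whose two end-cliques attach at the same or adjacent skeleton vertices) is where the actual argument of \cite{co} does its work. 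Since the paper only needs this statement as a citation, the appropriate fix is simply to cite \cite{co} (and, for the circular-interval subcase, Tucker's theorem) rather than to reprove it; if you do want a self-contained proof, both of the gaps above must be closed quantitatively, not just described.
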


Steiner \cite{ste} in 2023 proved that Odd Hadwiger's Conjecture holds for line-graphs of simple graphs. Note that every line graph is quasi-line, but not vice versa. In \cite{JSWZ}, the authors proposed a problem whether Steiner's result can be extended to quasi-line graphs. By virtue of Theorem \ref{thm-oh-low2}, we may give a positive response for $3K_1$-free graphs. More generally, we can establish the following corollary.

\begin{cor}\label{qhw}
Let $G$ be a graph with $\alpha(G) \leq 2$. If $G$ is quasi-line, $house$-free, or $W_4$-free, then $oh(G) \geq \chi(G)$.
\end{cor}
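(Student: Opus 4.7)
\textbf{Proof proposal for Corollary \ref{qhw}.} The plan is to split according to the three hypotheses and, in each case, verify the hypothesis of Theorem \ref{thm-oh-low2}, namely $\chi(G) \leq 2\omega(G)$ when $n$ is even and $\chi(G) \leq 9\omega(G)/5$ when $n$ is odd. Note that $3/2 \leq 9/5 \leq 2$, so it suffices to verify $\chi(G) \leq 3\omega(G)/2$ in the first two cases.

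First I would dispatch the quasi-line and house-free cases immediately. If $G$ is quasi-line, Theorem \ref{thm-quasiline} gives $\chi(G) \leq 3\omega(G)/2 \leq 9\omega(G)/5 \leq 2\omega(G)$, and Theorem \ref{thm-oh-low2} finishes. If $G$ is house-free with $\alpha(G) \leq 2$, Theorem \ref{thm-House, W_4}(1) yields $\chi(G) \leq \lfloor 3\omega(G)/2 \rfloor$, and we conclude in the same way.

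The bulk of the work is the $W_4$-free case. By Theorem \ref{thm-House, W_4}(2), $G$ falls into one of four classes: (i) $G$ is perfect, (ii) $G \cong \mathbb{K}[C_5] + K_t$ for some $t \geq 1$, (iii) $G \cong \mathbb{K}[\overline{C_7}]$, or (iv) $G$ is a generalized buoy $\mathbb{GB}$. In case (i), $\chi(G) = \omega(G)$ and a clique of order $\omega(G)$ furnishes an odd $K_{\chi(G)}$ minor trivially. In case (ii), set $G_0 = \mathbb{K}[C_5]$ with blow-ups $k_1, \dots, k_5$, so that $\omega(G_0) = \max_i(k_i + k_{i+1})$ and, since $\alpha(G_0) = 2$, $\chi(G_0) = \max\{\omega(G_0), \lceil (k_1+\cdots+k_5)/2\rceil\}$; an averaging argument ($\max_i(k_i+k_{i+1}) \geq \tfrac{2}{5}\sum k_i$) gives $\chi(G_0) \leq \tfrac{5}{4}\omega(G_0)$, and adding the universal $K_t$ preserves the ratio $\chi/\omega \leq 5/4$. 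Cases (iii) and (iv) are handled analogously: for $\mathbb{K}[\overline{C_7}]$ one uses $\omega(\overline{C_7})=3$, $\chi(\overline{C_7})=4$ together with the fact that a $K_r$-inflation of a graph $H$ satisfies $\omega(\mathbb{K}[H]) = r \cdot \omega(H)$ and $\chi(\mathbb{K}[H]) \leq r \cdot \chi(H)$ (when the blow-ups are uniform, else one works vertex-by-vertex), so $\chi/\omega \leq 4/3$; for the generalized buoy, one uses its explicit structural description to bound $\chi/\omega$ by $3/2$.

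The main obstacle is case (iv) together with the non-uniform instances of (ii) and (iii), where one must carefully compute or bound $\chi$ for non-uniform inflations joined with cliques. The cleanest approach is to argue directly: each of these graphs has independence number $2$, so every color class has size at most $2$ and $\chi(G) \geq \lceil n/2 \rceil$, while an explicit lower bound on $\omega(G)$ of roughly $2n/5$ (respectively $2n/7$ or better for the buoy) is read off from the structure. Combining these two, $\chi(G)/\omega(G) \leq 5/4$ (or at worst $7/4$) in every subcase, well below the $2$ threshold, so Theorem \ref{thm-oh-low2} applies. If the value of $n$ happens to be small, Lemma \ref{n26} covers those residual instances directly. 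This yields $oh(G) \geq \chi(G)$ in every subcase and completes the proof.
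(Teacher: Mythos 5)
Your handling of the quasi-line and house-free cases is correct and coincides with the paper: Theorem \ref{thm-quasiline} and Theorem \ref{thm-House, W_4}(1) give $\chi(G)\le 3\omega(G)/2\le 9\omega(G)/5$, so Theorem \ref{thm-oh-low2} applies. The gap is in the $W_4$-free case. Your ``cleanest approach'' for the non-uniform inflations and the generalized buoy combines two \emph{lower} bounds --- $\chi(G)\ge\lceil n/2\rceil$ (from $\alpha(G)\le 2$) and $\omega(G)\ge$ roughly $2n/5$ --- and from them concludes an \emph{upper} bound on $\chi(G)/\omega(G)$; that inference is invalid, since a lower bound on $\chi$ says nothing about $\chi/\omega$ from above. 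To route through Theorem \ref{thm-oh-low2} you need an upper bound on $\chi(G)$ in terms of $\omega(G)$ (or of $n$), and you only sketch one for $\mathbb{K}[C_5]+K_t$ (via the formula $\chi(\mathbb{K}[C_5])=\max\{\omega,\lceil n/2\rceil\}$, itself asserted without proof) and for \emph{uniform} inflations of $\overline{C_7}$; the non-uniform $\mathbb{K}[\overline{C_7}]$ and the generalized buoy $\mathbb{GB}$ are only gestured at, so precisely the subcases you identify as ``the main obstacle'' are left unproved.

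The repair is to drop the chromatic-ratio detour altogether, which is what the paper does: Theorem \ref{thm-House, W_4}(2) yields the clique bound $\omega(G)\ge n/3$ for every imperfect $W_4$-free graph with $\alpha(G)\le 2$ (for $\mathbb{K}[C_5]+K_t$ and $\mathbb{K}[\overline{C_7}]$ this is exactly your averaging computation, giving $2n/5$ resp.\ $3n/7$), and a lower bound on $\omega$ in terms of $n$ is precisely the hypothesis of Theorem \ref{thm-ohlow}: $n/3\ge(n+3)/4$ once $n\ge 9$, while Lemma \ref{n26} disposes of all small values of $n$. (The perfect case is immediate, as you say.) So the ingredients you assembled --- the structure theorem, the clique lower bounds, and Lemma \ref{n26} --- do suffice, but they must be fed into Theorem \ref{thm-ohlow} rather than into Theorem \ref{thm-oh-low2} via an unestablished upper bound on $\chi$.
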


\begin{proof}
For a quasi-line or house-free graph $G$ with $\alpha(G) \leq 2$, it is clearly true by Theorems \ref{thm-oh-low2}, \ref{thm-House, W_4} (1) and \ref{thm-quasiline}. If $G$ is a $W_4$-free graph with $\alpha(G) \leq 2$, then $\omega(G) \geq n/3$ by Theorem \ref{thm-House, W_4} (2). By Theorem \ref{thm-ohlow} and Lemma \ref{n26}, $oh(G) \geq \chi(G)$.
\end{proof}

Let $\mathbb{H}= \{H: \alpha(H) \leq 2$ and $4 \leq |H| \leq 5\}$. According to Theorem \ref{thm-t1}, Corollary \ref{qhw} and some results in \cite{JSWZ, st}, every $H$-free graph $G$  with $\alpha(G) \leq 2$ has been verified true for Odd Hadwiger's Conjecture for any $H\in \mathbb{H}$ except the following five graphs, $2K_2, K_2 \cup K_3, C_5$, hammer and butterfly.

{\footnotesize }

\end{document}